\begin{document}

\makeatletter
\@namedef{subjclassname@2020}{%
	\textup{2020} Mathematics Subject Classification}
\makeatother

\title{Rank growth of elliptic curves over $N$-th root extensions}
\author{Ari Shnidman and Ariel Weiss}
\date{}
\address{Ari Shnidman, Einstein Institute of Mathematics, The Hebrew University of Jerusalem, Edmund J.\ Safra Campus, Jerusalem 9190401, Israel.\vspace*{-3pt}}
\email{ariel.shnidman@mail.huji.ac.il}
\address{Ariel Weiss, Department of Mathematics, Ben-Gurion University of the Negev, Be'er Sheva 8410501, Israel.\vspace*{-3pt}}
\email{arielweiss@post.bgu.ac.il}
\subjclass[2020]{11G05, 14G05, 14K05, 11S25}

\maketitle

\begin{abstract}
Fix an elliptic curve $E$ over a number field $F$ and an integer $n$ which is a power of $3$. We study the growth of the Mordell--Weil rank of $E$ after base change to the fields $K_d = F(\!\sqrt[2n]{d})$.  If $E$ admits a $3$-isogeny, then we show that the average ``new rank'' of $E$ over $K_d$, appropriately defined, is bounded as the height of $d$ goes to infinity.  When $n = 3$, we moreover show that for many elliptic curves $E/\Q$, there are no new points on $E$ over $\Q(\sqrt[6]d)$, for a positive proportion of integers $d$.  This is a horizontal analogue of a well-known result of Cornut and Vatsal. As a corollary, we show that Hilbert's tenth problem has a negative solution over a positive proportion of pure sextic fields $\Q(\sqrt[6]{d})$.  

The proofs combine our recent work on ranks of abelian varieties in cyclotomic twist families with a technique we call the ``correlation trick'',
which applies in a more general context where one is trying to show simultaneous vanishing of multiple Selmer groups.  We also apply this technique to families of twists of Prym  surfaces, which leads to bounds on the number of rational points in sextic twist families of bielliptic genus 3 curves.      
\end{abstract}

\section{Introduction}

Let $E$ be an elliptic curve over a number field $F$, and let $K/F$ be a finite extension. Mazur and Rubin define $E$ to be {\it diophantine stable for }$K/F$ if $E(K) = E(F)$, i.e.\ if there are no new rational points on $E$ after base change to $K$ \cite{MazurRubinDioStab}.  There has been much interest and speculation regarding how often $E$ is diophantine stable for $K/F$, as $K$ varies through a family of Galois extensions $K/F$ of fixed degree and Galois group $G$ 
 \cites{dokchitser,DFK,DarmonTian, kisilevsky,FKK, MazurRubinFindingLarge, MazurRubinGrowthSel, MazurRubinDioStab,Fornea,LemkeOliverThorne, kisilevsky-nam, beneish2021rank}.  Mazur and Rubin themselves showed that for a positive density set of primes $\ell$, the curve $E$ is diophantine stable for infinitely many $\Z/\ell^n\Z$-extensions $K/F$, under the mild hypothesis that $\End_F(E) = \End_{\bar F}(E)$.

In this paper, we study a more refined notion of ``new points''. Observe that there may be points $P \in E(K)$ defined over intermediate extensions $L/F$ contained in $K$.  Moreover, there may be points $P \in E(K)$ whose minimal field of definition is $K$, but which are sums of points defined over smaller fields. These types of points are not really new, so 
 we define the \emph{new part} of $E(K)$ to be the quotient
 \[E(K/F)\new := E(K)/\sum_LE(L),\] 
 where the sum is over the subfields $F\subset L\subsetneq K$.\footnote{As we explain in \Cref{sec:new-rank}, this definition is best behaved when $K$ contains no proper subextensions $L/F$ whose normal closure is also a normal closure for $K$. This will always be the case in the situations we consider.}
 Note that $E$ is diophantine stable for $K/F$ if and only if $E(L/F)^\mathrm{new} = 0$ for all extensions $F\subsetneq L$ contained in $K$.
 
 \subsection{Rank growth}
 
 Our first result shows that the average rank of $E(K_d/F)\new$ is bounded, for certain elliptic curves $E$ and for certain families of number fields of the form $K_d = F(\sqrt[2n]{d})$. In other words, there are not too many new points on average, as $|\Nm_{F/\Q}(d)| \to \infty$.

\begin{theorem}\label{thm:rankgainintro}
Let $E/F$ be an elliptic curve admitting a $3$-isogeny and fix a positive integer $n = 3^m$. As $d$ runs over the elements of $F\t/F^{\times2n}$ of order $2n$, ordered by height, the average of $\rk E(K_d/F)\new$ is bounded. \end{theorem}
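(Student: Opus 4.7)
The plan is to reduce the theorem to the average rank bound for cyclotomic twist families from our earlier work, by identifying the new part $E(K_d/F)^{\mathrm{new}}$ with the Mordell--Weil group of a single twist of $E$.

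First I would pass to the Galois closure. Set $F' = F(\zeta_{2n})$ and $L_d = F'(\sqrt[2n]{d})$. Since $d$ has order $2n$ in $F^\times/F^{\times 2n}$, the extension $L_d/F'$ is cyclic of degree $2n$, with Galois group $G_d \cong \Z/2n\Z$, and $\mathrm{Gal}(L_d/F)$ is a semidirect product of $G_d$ with $\mathrm{Gal}(F'/F) \hookrightarrow (\Z/2n\Z)^\times$. The $\Q[\mathrm{Gal}(L_d/F)]$-module $E(L_d)\otimes\Q$ then decomposes into $F$-rational isotypic summands $V_k$ indexed by the divisors $k \mid 2n$, where $V_k$ collects contributions from characters of $G_d$ of exact order $k$.

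Second, I would match the new part with the top summand $V_{2n}$ (after descent from $F'$ to $F$). The intermediate fields of $K_d/F$ are precisely $F(\sqrt[k]{d})$ for $k\mid 2n$, and a point in $E(F'(\sqrt[k]{d}))$ is $G_d$-invariant under the subgroup of index $k$, so it lies in $\bigoplus_{k' \mid k} V_{k'}$. Hence $V_{2n}$ is exactly what survives in $E(L_d)\otimes\Q$ after quotienting by $\sum_L E(L)$, and for any fixed character $\chi$ of $G_d$ of order $2n$, $V_{2n}$ is naturally identified with $E^\chi(F')\otimes\Q$, where $E^\chi$ is the cyclotomic twist: an abelian variety over $F'$ of dimension $\phi(2n)$, carrying a $\Z[\zeta_{2n}]$-action and inheriting a $3$-isogeny from that of $E$.

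Third, I would invoke the cyclotomic twist family rank bound. As $d$ varies over order-$2n$ elements of $F^\times/F^{\times 2n}$ ordered by height, the abelian varieties $E^\chi$ form such a family, and our previous work, applying descent via the inherited $3$-isogeny, bounds the average of the $3$-isogeny Selmer rank (hence of $\rk E^\chi(F')$) by an absolute constant. This yields the desired bound on the average of $\rk E(K_d/F)^{\mathrm{new}}$.

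The main obstacle is the second step. Because $K_d/F$ is non-Galois when $\zeta_{2n}\notin F$, one must carefully verify that the isotypic decomposition of $E(L_d)\otimes\Q$ really matches the subfield filtration of $K_d$ under Galois descent, and in particular that no part of $V_{2n}$ leaks into $\sum_L E(L)$ and that no part of the lower $V_k$ fails to be captured by the proper subfields. A secondary technical point is to confirm that the height on $F^\times/F^{\times 2n}$ used here is compatible with the height ordering governing the cyclotomic twist family bound, so that the two averaging statements line up.
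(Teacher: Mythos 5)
Your overall strategy---trading the new part of $E(K_d)$ for the Mordell--Weil group of a single auxiliary abelian variety with $\zeta$-multiplication and then quoting the average-rank theorem for cyclotomic twist families---is the paper's strategy, and your first two steps (the isotypic decomposition and its matching with the subfield filtration) are carried out there in essentially the form you describe. But your third step has a genuine gap in how the family is set up. You build the varieties $E^{\chi}$ over $F'=F(\zeta_{2n})$, whereas the average-rank theorem you want to cite bounds $\operatorname{avg}\rk A_{d'}(F')$ as $d'$ ranges over \emph{all} of $F'^{\times}/F'^{\times 2n}$. Your parameters come only from the image of $F^{\times}/F^{\times 2n}$, a density-zero subfamily once $F'\neq F$ (and not one cut out by local conditions), so the average over the full family gives no control over the average you need. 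The paper avoids this by running the Serre tensor construction over $F$ itself: $B=\Z[\zeta_n]\otimes_{\Z}E$ is an abelian variety over $F$ with $\zeta_n$-multiplication whose $\mu_{2n}$-twists $B_d$ are parametrized by exactly the group $F^{\times}/F^{\times 2n}$ being averaged over, and $\rk B_d(F)=\rk E(K_d/F)^{\mathrm{new}}$ (\Cref{lem:new-rank-comparison}, using $B_{d^i}\simeq B_d$ for $i\in(\Z/2n\Z)^{\times}$ to collapse the sum over faithful characters). This also disposes of your worry about comparing heights, since there is only one parameter group.

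The second gap is that ``inherits a $3$-isogeny'' is not the hypothesis under which the cited theorem is proved: one needs the kernel $A[\pi]$ of the descended $1-\zeta$ to be a \emph{direct sum of characters}. For the Serre-tensor variety this fails: $B[\pi]\simeq E_{-3}[3]$ (\Cref{lem:pitorsion}), the full $3$-torsion of a quadratic twist, which is reducible but in general a non-split extension of $E'_{-3}[\widehat\theta_{-3}]$ by $E_{-3}[\theta_{-3}]$ even though $E$ has a $3$-isogeny; the same problem persists over $F'$. The missing idea is to pass to the isogenous quotient $A=B/E_{-3}[\theta_{-3}]$ (\Cref{def:A}), for which $A[\pi]\simeq E'_{-3}[\widehat\theta_{-3}]\times E[\theta]$ does decompose (\Cref{lem:completely-reducible}); the average-rank theorem then applies to $A$, and the bound passes back to $B$ because isogenous abelian varieties have equal rank. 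Without both repairs, your third paragraph has no theorem to invoke.
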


For the precise definition of height see \Cref{def:height}. To prove \Cref{thm:rankgainintro}, we construct, for each field $K_d$, an abelian variety $A_d$ over $F$ whose Mordell--Weil rank is equal to the rank of $E(K_d/F)\new$. This uses a version of Serre's tensor construction, as in \cite{MRS}.
Each abelian variety $A_d$ admits a $\mu_n$-action, where $\mu_n = \langle \zeta_n\rangle$ is the group scheme of $n$-th roots of unity, and the $A_d$'s are all $\mu_{2n}$-twists of each other. Because $E$ has a 3-isogeny, the Galois module $A_1[1-\zeta_n]$ decomposes as a direct sum of characters. This allows us to apply our recent result \cite{ShnidmanWeiss}*{Thm.\ 1.1}, which bounds the average rank of abelian varieties in families of $\mu_{2n}$-twists.  As in \cite{ShnidmanWeiss}, the upper bound on the average rank that is guaranteed by \Cref{thm:rankgainintro} can be made explicit, but the bound depends on the particular curve $E$.

In the spirit of Mazur and Rubin, we are more interested in proving that $E(K_d/F)\new = 0$ for infinitely many $d$. Actually, we consider the harder question of whether this is true for a \emph{positive proportion} of $d \in F^\times/F^{\times2n}$, which is what we will need for our applications to Hilbert's tenth problem below.     When $n = 1$ and $F = \Q$, we have $\rk E(K_d/F)\new = \rk E_d(F)$, where $E_d$ is the $d$-th quadratic twist of $E$, and Goldfeld conjectures that $50\%$ of these twists have rank $0$.
This conjecture has been verified in many cases \cites{bkls, kriz-li, smith}. Indeed, in his Ph.D. thesis, Smith proves the conjecture for ``most'' elliptic curves $E$ over $\Q$ \cite{Smith-thesis}. 

In contrast, when $n > 1$, there may not be a single $d$ for which $\rk E(K_d/F)\new = 0$, due to root number considerations. This phenomenon was already observed by Dokchitser in \cite{dokchitser} for cubic twists. For example, if $n = 3$, the Birch and Swinnerton-Dyer conjecture implies that for the elliptic curve $E \colon y^2 +y= x^3 + x^2 + x$ over $\Q$ of conductor 19, the group $E(K_d/\Q)\new$ will have odd rank for all squarefree integers $d$.

Despite these somewhat pathological examples, we prove that for many elliptic curves, we indeed have $E(K_d/\Q)\new=0$ for a positive proportion of $d \in \Q\t/\Q^{\times 6}$. We consider elliptic curves with the model $E \colon y^2 + axy + by = x^3$, whose discriminant is $b^3(a^3 - 27b)$. These are precisely the elliptic curves over $\Q$ with a rational $3$-torsion point, which is $(0,0)$ in this model.

\begin{theorem}\label{thm:pos-prop}
    Let $a$ and $b$ be coprime integers, and let $E$ be the elliptic curve $y^2 + axy + by = x^3$. Assume that $3\nmid ab$ and that either:
    \begin{enumerate}
        \item there exists a prime $q\equiv 2\pmod 3$ such that $q\mid a^3 - 27b$, or
        \item there exist primes $q_1 \equiv 1\pmod3$ and $q_2\equiv 2\pmod 3$ such that $q_1\mid a^3-27b$ and $q_2\mid b$.
    \end{enumerate}
    Set $K_d = \Q(\sqrt[6]{d})$.  Then for a positive proportion of integers $d$, we have $\rk E(K_d/\Q)\new = 0$.
\end{theorem}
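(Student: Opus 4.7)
My plan is to reduce the vanishing of $E(K_d/\Q)\new$ to the vanishing of the Mordell--Weil rank of an auxiliary abelian variety $A_d/\Q$, and then to control its Selmer group using the $3$-isogeny on $E$ combined with the hypotheses on the primes dividing $ab(a^3-27b)$. As in the proof sketch of \Cref{thm:rankgainintro}, the Serre tensor construction produces an abelian variety $A_d/\Q$, equipped with a $\mu_6$-action, such that $\rk A_d(\Q) = \rk E(K_d/\Q)\new$, and the $A_d$ form a family of $\mu_6$-twists of $A_1$. So it suffices to prove $A_d(\Q) \otimes \Q = 0$ for a positive proportion of $d$.

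Because $E$ has a rational $3$-isogeny, $A_d$ admits an isogeny $\phi_d$ whose kernel $A_d[\phi_d]$ splits, as a $G_\Q$-module, into a direct sum of characters. The standard Selmer bound then gives
\[\rk A_d(\Q) \leq \dim_{\F_3}\Sel_{\phi_d}(A_d) + \dim_{\F_3}\Sel_{\hat\phi_d}(A_d^\vee),\]
so it is enough to show that both $\phi_d$- and $\hat\phi_d$-Selmer groups vanish simultaneously for a positive proportion of $d$. Each individual Selmer group is cut out, by Poitou--Tate, by local conditions whose dependence on $d$ is governed by the primes dividing $d$. Using the hypothesis that either $a^3-27b$ has a prime divisor $q\equiv 2\pmod 3$, or $a^3-27b$ and $b$ have prime divisors $q_1\equiv 1\pmod 3$ and $q_2\equiv 2\pmod 3$ respectively, one shows that at these fixed auxiliary primes the local Selmer conditions on the characters cutting out $A_d[\phi_d]$ become restrictive enough that generically (in $d$) the global Selmer group is forced to be trivial.

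The central obstacle is passing from ``each Selmer group vanishes for positive proportion of $d$'' to ``both vanish simultaneously for positive proportion of $d$''. Since $\phi_d$ and $\hat\phi_d$ Selmer groups are cut out by dual local conditions at overlapping sets of primes, a naive union bound is insufficient: if each holds with proportion $p_1, p_2 < 1$, there is no a priori reason their intersection is positive. This is precisely the situation the \emph{correlation trick} of the abstract is designed to address. Concretely, I would set up a joint local distribution at each prime dividing $d$, show that the events ``$\phi_d$-Selmer trivial at $p$'' and ``$\hat\phi_d$-Selmer trivial at $p$'' are positively correlated (or at least have non-vanishing joint density), and then run the resulting double-Selmer argument in parallel with the analysis of \cite{ShnidmanWeiss}*{Thm.\ 1.1} applied to the $\mu_6$-twist family.

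I expect the hardest step to be the implementation of this correlation trick in a form compatible with the $\mu_6$-twist framework: we need to show that imposing local norm conditions at primes $p \mid d$ controls both Selmer structures simultaneously in a way that survives the averaging over $d$. The hypothesis that $3\nmid ab$ is used to rule out wild ramification at $3$, ensuring the local analysis at $3$ does not obstruct the global count, while the existence of the prime $q\equiv 2\pmod 3$ provides an unramified ``anchor'' prime at which one of the two Selmer conditions becomes automatic, breaking the symmetric deadlock and allowing the correlation argument to produce a positive proportion.
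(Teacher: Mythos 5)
Your high-level reduction is the same as the paper's: build the abelian surface $A_d$ with $\rk A_d(\Q) = \rk E(K_d/\Q)\new$, exploit the $3$-isogeny structure, and recognize that the hard point is simultaneous vanishing of two Selmer groups rather than vanishing of each separately. But the proposal is missing the actual mechanism of the correlation trick, and what you substitute for it would not work. First, the two Selmer groups that must vanish together are not $\Sel(\phi_d)$ and $\Sel(\widehat\phi_d)$: since $A[\pi]\simeq(\Z/3\Z)^2$ with trivial Galois action, the isogeny $\pi_d$ factors as $\phi'_d\circ\phi_d$ (through $\langle P\rangle$) and also as $\psi'_d\circ\psi_d$ (through $\langle Q\rangle$), and the pair to control is $\Sel(\phi_d)$ and $\Sel(\phi'_d)$. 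The dual Selmer groups are then handled not by any correlation argument but by the Greenberg--Wiles formula: once the global Selmer ratios $c(\phi_d)=c(\phi'_d)=c(\psi_d)=c(\psi'_d)=1$ (this is exactly what hypotheses (i) and (ii) buy, by letting you tune the local ratios at the auxiliary primes $q$ or $q_1,q_2$ against the contributions at $3$ and $\infty$ --- not by making ``one Selmer condition automatic'' at an anchor prime), one gets $\#\Sel(\phi_d)=\#\Sel(\widehat\phi_d)$ for free.

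Second, your proposed implementation --- showing the local conditions at primes $p\mid d$ are ``positively correlated'' and running a joint local-global count --- is not the paper's argument and has no clear route to a proof: averaging results alone cannot exclude that the two positive-density sets $\{d:\Sel(\phi_d)=0\}$ and $\{d:\Sel(\phi'_d)=0\}$ are disjoint. The missing idea is the \emph{third} factorization of $\pi_d$ through $\langle P+Q\rangle$, giving an isogeny $\eta_d$ with $\Sel(\eta_d)$ injecting into both $\Sel(\phi'_d)$ and $\Sel(\psi'_d)=\Sel(\phi_d)$, hence $\#\Sel(\eta_d)\le\min(\#\Sel(\phi_d),\#\Sel(\phi'_d))$. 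The results of \cite{ShnidmanWeiss} give $\avg\#\Sel(\phi_d)=\avg\#\Sel(\phi'_d)=2$ and $\avg\#\Sel(\eta_d)>1$ on the congruence set $T$, and the identity
\[
\#\Sel(\phi_d)+\#\Sel(\phi'_d)=\min(\#\Sel(\phi_d),\#\Sel(\phi'_d))+\max(\#\Sel(\phi_d),\#\Sel(\phi'_d))
\]
converts the lower bound $\liminf\avg(\min)>1$ into $\liminf\avg(\max)<3$, which for $\F_3$-vector spaces forces $\max=1$ on a positive proportion of $d\in T$. Without this step (or an equivalent), the argument does not close. You would also need the explicit local computations of the Selmer ratios at $p\mid 3\mathfrak{f}d$ and at $\infty$ to verify that $T$ has positive density under hypotheses (i) or (ii); as the paper stresses, some such hypothesis is unavoidable, since otherwise parity forces the theorem to fail.
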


More precisely, the lower density of integers $d$ such that $\rk E(K_d/\Q)\new = 0$ is positive. Our proof gives an explicit lower bound on this lower density, but the bound depends on $E$ and tends to be quite small. For example, in \Cref{subsec:example}, we work out the details for the curve $E\: y^2 + 2xy - y = x^3$ of conductor $35$. In this case, we exhibit a set $T$ of squarefree integers, defined by finitely many congruence conditions, such that a proportion of at least $\frac{1}{18}$ of $d \in T$ satisfy $\rk E(K_d/\Q)\new = 0$.

In the setting of \Cref{thm:pos-prop}, the Galois group of the splitting field of $K_d/\Q$ is the dihedral group $D_6$ of order $12$, with the subgroup $C_6$ cut out by the imaginary quadratic field $\Q(\zeta_3)$. In this context, the groups $E(K_d/\Q)\new$ have a systematic source of rational points, namely $\chi_d$-components of Heegner points, where $\chi_d$ is the corresponding ring class field character of order 6. In terms of $L$-functions, the Birch and Swinnerton-Dyer conjecture predicts that the rank of $E(K_d/\Q)\new$ is 0 if and only if the twisted $L$-function $L(E, \chi_d, s)$ is non-vanishing at its central point $s = 1$.  \Cref{thm:pos-prop} should be compared to the non-vanishing results of Cornut--Vatsal \cite{cornut-vatsal}, who showed that for every prime $\p$ of $\O_K$, and for all large enough $n$, there exist ring class field characters $\chi$ of conductor $\p^n$ such that $L(E,\chi,1)$ is non-vanishing. Our result is orthogonal to theirs (``horizontal''  instead of ``vertical''), since we fix the order of the character while allowing many primes to divide the conductor.  Of course, we only consider a very special case, where $K = \Q(\zeta_3)$ and the order of the characters is 6.  Our general method could conceivably be adapted to other quadratic fields, but that seems to require some new ideas  (one would first of all need to generalize \cite{ShnidmanWeiss} appropriately).  

\subsection{The correlation trick}

The proof of \Cref{thm:pos-prop} uses \Cref{thm:rankgainintro} as a starting point, but requires significantly more. 
The abelian varieties $A_d$ from the proof of \Cref{thm:rankgainintro} are, in this case, abelian surfaces with multiplication by $\Z[\zeta_3]$. In fact, they are twists of the Jacobian of the genus two curve 
\[C \colon y^2 =x^6 + \alpha x^3 + 1,\]
where $\alpha = 108b/a^3-2$. Notice that $\Aut(C)$ contains the group $D_{6}$, and in particular the automorphism $\zeta_3(x,y) = (\zeta_3x,y)$ of order 3.\footnote{The non-hyperelliptic involution is  $(x :y : z) \to (z : y:x)$, when written in weighted projective coordinates.}   The endomorphism $2\zeta_3 + 1 = \sqrt{-3} \in \End(A_d)$ is only defined over $\Q(\sqrt{-3})$, but it descends to a $(3,3)$-isogeny over $\Q$, which factors into two 3-isogenies $\phi_d \colon A_d \to B_d$ and $\psi_d \colon B_d \to A_{-27d}$.

Under the mild technical conditions of \Cref{thm:pos-prop}, we show that there exists a positive density set $T \subset \Z$, defined by congruence conditions, such that $\#\Sel(\phi_d) = \#\Sel(\widehat{\phi}_d)$ and $\#\Sel(\psi_d) = \#\Sel(\widehat\psi_d)$ for all but finitely many $d \in T$. We reiterate that without some kind of technical condition on $E$, the parity conjecture implies that \Cref{thm:pos-prop} is false in general. When $d\in T$, we show that $\#\Sel_3(A_d) \leq (\#\Sel(\phi_d)\#\Sel(\psi_d))^2$. Since 
\[\rk E(K_d/\Q)\new =  \rk A_d(\Q) \leq  \dim_{\F_3}\Sel_3(A_d),\]
in order to prove \Cref{thm:pos-prop}, it is enough to show that $\Sel(\phi_d) = \Sel(\psi_d) = 0$ for a positive proportion of integers $d\in T$. 

Using the results of \cite{ShnidmanWeiss}, we show that the average size of each of $\Sel(\phi_d)$ and $\Sel(\psi_d)$ is $2$, for $d \in T$. Since these Selmer groups are $\F_3$-vector spaces, we immediately deduce that each of these groups vanishes for at least $50\%$ of $d\in T$. The problem is that, a priori, we cannot rule out the possibility that the sets $\{d \in T \colon \Sel(\phi_d) = 0\}$ and $\{ d\in T \colon \Sel(\psi_d) = 0\}$ intersect in a set of density 0.  Thus, to complete the proof, it remains to rule out the unlikely pathological scenario that half of $d \in T$ satisfy $\#\Sel(\phi_d) = 1$ and $\#\Sel(\psi_d) = 3$, and the other half satisfy $\#\Sel(\phi_d) = 3$  and $\#\Sel(\psi_d)=1$. In other words, we must show that the random variables $\#\Sel(\phi_d)$ and $\#\Sel(\psi_d)$ are at least a tiny bit correlated, e.g.\ that they are either both 1 or both greater than 1, for a positive proportion of $d \in T$.  

To prove this correlation, we use a third 3-isogeny $\eta_d \colon A_d \to C_d$, whose Selmer group $\Sel(\eta_d)$ can be interpreted as lying in the intersection of $\Sel(\phi_d)$ and $\Sel(\psi_d)$. We then apply the results of \cite{ShnidmanWeiss} to $\eta_d$ to show that the average size of the intersection is strictly greater than 1.  However, this \emph{does not} show that their intersection is non-trivial a positive proportion of the time, since the bulk of the average size could be supported on a 0-density set.  The trick is to observe that since 
\[\#\Sel(\phi_d) + \#\Sel(\psi_d) = \min(\#\Sel(\phi_d), \#\Sel(\psi_d)) + \max(\#\Sel(\phi_d), \#\Sel(\psi_d)), \]
we can infer that the average of their maximum size is strictly less than $2 + 2 - 1 = 3$. Since we are dealing with $\F_3$-vector spaces, this implies  that $\Sel(\phi_d)$ and $\Sel(\psi_d)$ are both trivial for a positive proportion of $d\in T$.
By the definition of $T$, we have $\Sel_3(A_d) = 0$ for such $d$, and $\rk(A_d) = 0$ as well, which proves the theorem. 

\subsection{Application to Hilbert's tenth problem for pure sextic fields}

Hilbert asked whether there is a Turing machine that takes as input a polynomial equation over $\Z$ and correctly decides whether it has a solution over $\Z$. Matiyasevich \cite{Matiyasevich}, building on work of Davis--Putnam--Robinson \cite{DPR}, showed that no such algorithm exists, i.e.\ Hilbert's tenth problem has a negative solution over $\Z$. 

There has been much work on the analogous question  over rings of integers $\O_K$ of number fields $K$ of degree larger than 1; see the introduction to \cite{garcia-fritz-pasten} for a brief survey. It is believed that the answer should again be negative, and this has been proven for number fields with at most one complex place. In particular, it is known when $K$ has degree 2 or 3, and for many $K$ of degree 4 as well.  While the general case is still open, Mazur and Rubin have shown that a negative answer follows from the finiteness of the Tate--Shafarevich group \cite{MazurRubinHilbert10}. This uses a result of Shlapentokh \cite{Shlapentokh} which states that Hilbert's tenth problem over $\O_K$ has a negative answer if there exists an elliptic curve $E/\Q$ of positive rank such that $\rk E(K) = \rk E(\Q)$, i.e.\ with no rank gain over $K$.  We combine this criterion with \Cref{thm:pos-prop} to prove:

\begin{theorem}\label{thm:hilbert}
 For a positive proportion of pure sextic fields $K = \Q(\sqrt[6]{d})$, ordered by the height of $d$, the analogue of Hilbert's tenth problem over $\O_K$ has a negative solution.   
\end{theorem}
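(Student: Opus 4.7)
The plan is to invoke Shlapentokh's criterion cited in the introduction: it suffices to produce an elliptic curve $E/\Q$ of rank at least one for which, for a positive proportion of squarefree $d$, $\rk E(K_d) = \rk E(\Q)$. First I would fix a specific $E\colon y^2+axy+by = x^3$ of positive Mordell--Weil rank satisfying the local hypotheses of \Cref{thm:pos-prop}; such curves exist in abundance as $(a,b)$ varies over the prescribed congruence classes, and positive rank can be verified via explicit point searches or $L$-function computations in any specific example.

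Since $K_d$ has exactly two proper intermediate extensions of $\Q$, namely $\Q(\sqrt{d})$ and $\Q(\sqrt[3]{d})$, the equality $\rk E(K_d) = \rk E(\Q)$ is equivalent to the simultaneous vanishing of three quantities: (i) $\rk E(K_d/\Q)\new$, (ii) $\rk E_d(\Q)$, where $E_d$ is the quadratic twist of $E$ by $d$, and (iii) $\rk E(\Q(\sqrt[3]{d})) - \rk E(\Q)$, which upon ascent to $\Q(\zeta_3, \sqrt[3]{d})$ records the Mordell--Weil rank of an abelian surface obtained as a cubic twist of $E_{\Q(\zeta_3)}$. I would handle (i) directly via \Cref{thm:pos-prop}, which yields a positive-density subset $T$ of squarefree integers---defined by congruence conditions at bad primes together with Selmer-vanishing conditions---for which this vanishing holds. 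For (ii) and (iii), since $E$ has a rational $3$-isogeny, the cyclotomic twist framework of \cite{ShnidmanWeiss} applies to both the family of quadratic twists $\{E_d\}$ and the family of cubic twists of $E_{\Q(\zeta_3)}$; in each case one obtains a positive proportion of $d$ for which the relevant $3$-Selmer group, and hence the rank, vanishes.

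The crucial final step---and the main obstacle---is to show that these three independently-proven positive-proportion conditions are jointly satisfied by a positive-proportion set, rather than living in pairwise disjoint density-positive subsets. I would extend the correlation trick from the proof of \Cref{thm:pos-prop} to the enlarged collection of $3$-isogenies governing all the relevant Selmer groups: the isogenies $\phi_d, \psi_d, \eta_d$ controlling $\Sel_3(A_d)$, together with the $3$-isogenies of $E$ (resp.\ of $E_{\Q(\zeta_3)}$) controlling the Selmer groups of the quadratic (resp.\ cubic) twist families. Averaging the maximum of their sizes over $d \in T$---in the spirit of the identity $\#\Sel(\phi_d) + \#\Sel(\psi_d) = \min + \max$ used in the proof of \Cref{thm:pos-prop}---pushes the joint size below the threshold forcing triviality, yielding a positive-proportion subset $T' \subseteq T$ on which all these Selmer groups vanish simultaneously.

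For $d \in T'$ we have $\rk E(K_d) = \rk E(\Q) \geq 1$, and Shlapentokh's criterion then delivers a negative solution to Hilbert's tenth problem over $\O_{K_d}$. Since the heights of the corresponding pure sextic fields $\Q(\sqrt[6]{d})$ scale with the height of $d$, a positive proportion of integers $d$ translates into a positive proportion of pure sextic fields ordered by height, as required.
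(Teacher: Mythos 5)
There is a genuine gap, and it stems from applying Shlapentokh's criterion to the wrong extension. You require $\rk E(K_d) = \rk E(\Q)$, which forces you to control three quantities, including the rank gain over the cubic subfield $\Q(\sqrt[3]{d})$. The paper avoids this entirely: it applies \Cref{thm:shlapentokh} to the relative extension $K_d/F$ with $F = \Q(\sqrt[3]{d})$, where only $\rk E(K_d) = \rk E(F) > 0$ is needed, and then uses the known fact that cubic fields are integrally diophantine together with the transitivity of \Cref{lem:transitivity} to descend to $\Q$. Your condition (iii) is not only unnecessary but genuinely problematic: as the introduction notes (following Dokchitser), root number considerations can force odd rank in the entire cubic twist family of a given curve, so there is no reason the cubic-subfield rank gain vanishes for a positive proportion of $d$ without imposing further hypotheses, and \cite{ShnidmanWeiss} does not hand you this for free.

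The second gap is the ``crucial final step'' you flag but do not carry out. The correlation trick in \Cref{prop:ab-surface-pos-prop} is a two-variable argument resting on a specific geometric input: a third isogeny $\eta_d$ whose Selmer group injects into \emph{both} $\Sel(\phi_d)$ and $\Sel(\psi'_d)$ via \Cref{lem:injection-of-selmer}, which makes $\avg\,\mathrm{min}_d > 1$ and hence $\avg\,\mathrm{max}_d < 3$. For three or more unrelated twist families (the surface $A_d$, the quadratic twists $E_d$, and a cubic twist family) there is no common isogeny producing a Selmer group sitting inside all of them, and the $\min+\max$ identity does not extend to give a usable bound. The paper sidesteps the need to correlate with the quadratic twist condition by a completely different mechanism: it shows \emph{deterministically} that $c(\theta_d)=1$ for all $d$ in the set $\Sigma$ produced by \Cref{thm:pos-prop}, deduces via \cite[Prop.\ 49(b)]{elk} and the $3$- and $2$-parity theorems that $\dim_{\F_2}\Sel_2(E_d)$ is even for all such $d$, and then invokes Smith's theorem (\Cref{thm:smith}, requiring $E[2](\Q)\not\simeq\Z/2\Z$) to conclude $\rk E_d(\Q)=0$ for $100\%$ of $d\in\Sigma$. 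Intersecting a positive-density set with a density-one set is then trivial. Without either this parity-plus-Smith input or a worked-out multi-family correlation argument, your proof does not close.
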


Recently, Garcia-Fritz and Pasten \cite{garcia-fritz-pasten} proved a similar result for  an explicit subset of fields of the form $\Q(\sqrt[6]{-p^2q^3})$, with $p$ and $q$ prime. The set of all such fields has density 0 among all pure sextic fields, so \Cref{thm:hilbert} is a quantitative improvement.  On the other hand, our result does not give an explicit set of fields.  We prove \Cref{thm:hilbert} by applying \Cref{thm:pos-prop} to a single elliptic curve, but to maximize the proportion of pure sextic fields that our method gives, one could try to use many different elliptic curves. We do not attempt such an analysis here.

\subsection{Applications to twists of abelian surfaces}

The correlation trick can be applied in other settings where one needs to simultaneously bound  Selmer groups attached to two isogenies whose kernels are isomorphic (as group schemes). As an example, we prove the existence of simple abelian surfaces with low rank in some of the sextic twist families considered in \cite{ShnidmanWeiss}, partially answering a question we asked there.  Recall that if $C \to E$ is a ramified double cover of curves, then the Prym variety is the kernel of the induced map $\Jac(C) \to \Jac(E)$ on Jacobians.  As a special case, a smooth genus 3 curve of the form $C \colon y^3 = f(x^2)$, with $f(x)$ quadratic, admits a double cover to the elliptic curve $y^3 = f(x)$.  In this case, the Prym variety is an abelian surface. 

\begin{theorem}\label{thm:qm-pryms}
    Fix integers $a > b > 0$ and let $A_d$ be the twist family of Prym surfaces arising from the genus three bielliptic curves $C_d\: y^3 = (x^2- da^2)(x^2 - db^2)$.  For a positive proportion of integers $d$, ordered by absolute value, we have $\rk A_d \le 1$.
\end{theorem}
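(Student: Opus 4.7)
The plan is to apply the correlation trick, following the template of the proof of \Cref{thm:pos-prop}.  First I would record the basic structure of the family: $C_d$ carries the order-$3$ automorphism $(x,y)\mapsto (x,\zeta_3 y)$ and the involution $\tau\colon(x,y)\mapsto(-x,y)$, whose quotient is the elliptic curve $E_d\colon y^3=(u-da^2)(u-db^2)$.  Consequently the Prym $A_d$ is an abelian surface on which the $\mu_3$-action descends, giving an embedding $\Z[\zeta_3]\hookrightarrow \End(A_d)$, and the collection $\{A_d\}$ is a sextic twist family of the shape treated in \cite{ShnidmanWeiss}.

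Next, as in the proof of \Cref{thm:pos-prop}, the endomorphism $\sqrt{-3}=2\zeta_3+1$, defined a priori only over $\Q(\sqrt{-3})$, gives rise to a $(3,3)$-isogeny over $\Q$ that factors as a composition $\phi_d\colon A_d\to B_d$ followed by $\psi_d\colon B_d\to A_d'$, where $A_d'$ is an explicit sextic twist of $A_d$.  The kernels of $\phi_d$ and $\psi_d$ are $\mu_3$-type $\Q$-group schemes, so \cite{ShnidmanWeiss}*{Thm.\ 1.1} bounds the average sizes of $\Sel(\phi_d)$ and $\Sel(\psi_d)$ as $d$ ranges over integers of bounded absolute value.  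From the standard exact sequence
\[0\to \Sel(\phi_d)\to \Sel_{\sqrt{-3}}(A_d)\to \Sel(\psi_d)\]
one already obtains an average bound on $\dim_{\F_3}\Sel_{\sqrt{-3}}(A_d)$, and hence on $\rk A_d$, but this alone does not yield a pointwise bound holding for a positive proportion of $d$.

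To sharpen matters, I would construct a third $\Q$-rational $3$-isogeny $\eta_d\colon A_d\to D_d$ — the bridging isogeny of the correlation trick — arranged so that $\Sel(\eta_d)$ lies in the intersection $\Sel(\phi_d)\cap \Sel(\psi_d)$.  Applying \cite{ShnidmanWeiss} to $\eta_d$ bounds the average of $\#\Sel(\eta_d)$, which lower-bounds the average of $\#\bigl(\Sel(\phi_d)\cap\Sel(\psi_d)\bigr)$.  The identity
\[\#\Sel(\phi_d)+\#\Sel(\psi_d)=\max\bigl(\#\Sel(\phi_d),\#\Sel(\psi_d)\bigr)+\min\bigl(\#\Sel(\phi_d),\#\Sel(\psi_d)\bigr),\]
combined with $\min(\#\Sel(\phi_d),\#\Sel(\psi_d))\ge \#(\Sel(\phi_d)\cap\Sel(\psi_d))$, then produces an upper bound on the average of $\max(\#\Sel(\phi_d),\#\Sel(\psi_d))$.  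Since $\F_3$-Selmer sizes take values in $\{1,3,9,\ldots\}$, a sufficiently small such average forces $\dim_{\F_3}\Sel(\phi_d)$ and $\dim_{\F_3}\Sel(\psi_d)$ to both be at most $1$ (with at least one of them equal to $0$) for a positive proportion of $d$, yielding $\dim_{\F_3}\Sel_{\sqrt{-3}}(A_d)\le 1$ and therefore $\rk A_d\le 1$.

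The main obstacle is the geometric and local-arithmetic analysis of $\phi_d$, $\psi_d$, and $\eta_d$: identifying the three $\mu_3$-type $\Q$-subgroup schemes of $A_d[3]$ and their mutual incidence, computing the local Selmer conditions at primes of bad reduction, and producing average Selmer-size bounds sharp enough that the correlation inequality above actually forces $\rk A_d \le 1$ rather than some weaker rank bound.  A secondary subtlety, absent in the elliptic-curve setting of \Cref{thm:pos-prop}, is that $A_d$ is two-dimensional, so one must verify that $\Sel_{\sqrt{-3}}(A_d)$ controls $\Sel_3(A_d)$, and hence $\rk A_d$, up to the bounded error needed to obtain the stated pointwise bound.
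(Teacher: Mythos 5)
Your overall strategy is the right one in spirit, but there is a genuine gap: the correlation trick cannot be applied uniformly to this family, and you have not noticed the local obstruction at $p=3$ that forces a case distinction. The hypothesis of \Cref{prop:ab-surface-pos-prop} is that all four global Selmer ratios $c(\phi_d), c(\phi'_d), c(\psi_d), c(\psi'_d)$ equal $1$ on a positive-density set. For the Prym surfaces, the factorization $[3]=\pi_{-27d}\circ\pi_d$ gives $c_3(\phi_d)c_3(\phi'_d)c_3(\phi_{-27d})c_3(\phi'_{-27d})=c_3([3])=9$, and each factor is an integral power of $3$, so exactly two of them equal $3$ and two equal $1$. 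It may happen that $c_3(\phi_d)\neq c_3(\phi'_d)$ for $100\%$ of $d$; since $c_\infty$ contributes the \emph{same} factor ($\tfrac13$ or $1$) to both, the global ratios then cannot be simultaneously $1$, the set $T$ is empty, and the intersection argument via $\eta_d$ is not available in the form you describe. The paper therefore splits into two cases: if the ratios at $3$ are unbalanced on a positive-density set, a direct counting argument (average $\#\Sel=2$ forces vanishing half the time; average $\#\Sel=4/3$ forces $\dim\le 1$ five-sixths of the time, whence both bounds hold for $\ge 1/3$ of $d$) already gives $\rk A_d\le 1$ with no bridging isogeny at all; only in the balanced case does one show $T$ has positive density (by flipping the sign of $d$ so that $c_\infty$ cancels $c_3$) and invoke the correlation trick, which then gives the stronger conclusion $\rk A_d=0$. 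Your proposal, as written, would stall exactly at the point where you try to verify the hypothesis of the correlation trick.

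A second, smaller issue: you defer as a ``secondary subtlety'' the passage from $\Sel(\pi_d)$ to $\Sel_3(A_d)$, but this is an essential ingredient, not an afterthought. Since $\dim\Sel_3(A_d)\le\dim\Sel(\pi_d)+\dim\Sel(\pi_{-27d})$, one must control the second summand without rerunning the whole argument at $-27d$; the paper does this by exhibiting a polarization of the Prym of degree $4$ (prime to $3$) whose Rosati involution is complex conjugation on $\Z[\zeta_3]$, which yields $\Sel(\pi_{-27d})\simeq\Sel(\widehat\pi_d)$, and then the Greenberg--Wiles formula converts knowledge of $\Sel(\phi_d)$ and the Selmer ratio into the needed bound on the dual Selmer groups. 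Without this duality step your argument only bounds half of what controls $\rk A_d$.
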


As a corollary, we prove: 
\begin{theorem}\label{thm:qmcurves}
    Let $C_d$ be any twist family as above. For a positive proportion of $d\in \Q\t/\Q^{\times 6}$, we have $\# C_d(\Q)\le 5$.
\end{theorem}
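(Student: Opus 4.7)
The plan is to combine Theorem~\ref{thm:qm-pryms} with a rank-zero statement for the elliptic quotient of $C_d$, and then count preimages of rational torsion to bound $\#C_d(\Q)$ by $5$.

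\textbf{Bielliptic decomposition.} The involution $(x,y)\mapsto(-x,y)$ exhibits $\pi\colon C_d \to E_d$ as a degree-$2$ cover, where $E_d\colon y^3 = (t-da^2)(t-db^2)$.  Completing the square in $t$ identifies $E_d$ with $\{Y^2 = X^3 + c_d^2\}$, where $c_d = d(a^2-b^2)/2$; this is a cubic twist of a fixed CM elliptic curve $E_1$ carrying a rational $3$-torsion point.  Under this identification the two nontrivial rational $3$-torsion points of $E_d$ are $(da^2, 0)$ and $(db^2, 0)$ in the $(t,y)$-coordinates, and the Prym of $\pi$ is (isogenous to) the abelian surface $A_d$ of Theorem~\ref{thm:qm-pryms}, giving $\Jac(C_d) \sim E_d \times A_d$.

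\textbf{Rank zero for $E_d$ and correlation.} I would next show that for a positive proportion of $d$, $\rk E_d(\Q) = 0$ and $E_d(\Q)_{\mathrm{tors}} = \Z/3\Z$. The rank-zero statement follows by applying the Selmer-average machinery of \cite{ShnidmanWeiss} to the cubic twist family $\{E_d\}$ (an easier instance of what is already needed for Theorem~\ref{thm:qm-pryms}), and $\Z/6\Z$-torsion requires $c_d$ to be a rational cube, a density-zero condition. To intersect this with the positive-density set from Theorem~\ref{thm:qm-pryms}, I would apply the correlation trick of \S1.2 to the $\F_3$-Selmer groups attached to the relevant $3$-isogenies in $E_d$ and $A_d$, yielding a positive-density set $S$ of $d$ on which both $\rk A_d \leq 1$ and $\rk E_d(\Q) = 0$ hold.

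\textbf{Counting preimages and main obstacle.} For $d \in S$ we have $E_d(\Q) = \{O, (da^2,0), (db^2,0)\}$, and every $\Q$-point of $C_d$ projects via $\pi$ to one of these three points. The fibre over $O$ is the single point at infinity of $C_d$; the fibre over $(da^2, 0)$ consists of $(x,0)$ with $x^2 = da^2$, contributing at most two rational points; similarly for $(db^2, 0)$. Hence $\#C_d(\Q) \leq 1 + 2 + 2 = 5$. The hardest step is the correlation: without it, one cannot rule out that the positive-density sets for $\rk A_d \leq 1$ and for $\rk E_d(\Q) = 0$ meet only in density zero. Once $S$ is known to have positive density, the final counting step is elementary.
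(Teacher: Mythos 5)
Your final counting step is internally consistent, but the route you take to reach it is not the paper's and contains a genuine gap. The paper's proof is a one-line reduction: given \Cref{thm:qm-pryms}, the argument of \cite{ShnidmanWeiss}*{Thm.\ 1.7} applies verbatim. That argument uses only the conclusion $\rk A_d(\Q)\le 1 < 2 = \dim A_d$: since the Mordell--Weil rank of the Prym quotient is strictly smaller than its dimension, a Chabauty--Coleman argument on the composite $C_d\to\Jac(C_d)\to A_d$ bounds $\#C_d(\Q)$, with no input whatsoever about the elliptic quotient $E_d$. Your proposal instead discards $A_d$ entirely at the counting stage and replaces it with the requirement that $\rk E_d(\Q)=0$ and $E_d(\Q)\simeq\Z/3\Z$ for a positive proportion of $d$; indeed, if that requirement held, your count $1+2+2=5$ would go through without \Cref{thm:qm-pryms} at all, which is a sign the approach is off-track.

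That requirement is the gap. The curves $E_d$ form a sextic twist family of a $j=0$ curve, which is exactly the setting where the introduction warns that root-number considerations can force odd rank for \emph{every} twist (cf.\ the conductor-$19$ example). Proving $\rk E_d(\Q)=0$ on a positive-density set would require its own Selmer-ratio analysis and, as in \Cref{thm:pos-prop}, hypotheses on the curve to exclude the parity obstruction; \Cref{thm:qmcurves} imposes no conditions on $a,b$, so your argument cannot cover all cases. Your appeal to ``the correlation trick'' to intersect the two positive-density sets is also unsubstantiated: \Cref{prop:ab-surface-pos-prop} works because the two $3$-isogenies $\phi_d,\phi'_d$ of the \emph{same} abelian surface admit a common refinement $\eta_d$ whose Selmer group embeds into both, and you would need to construct an analogous isogeny linking a $3$-isogeny of $E_d$ with one of $A_d$, which you do not do. The fix is to abandon the $E_d$ route and run Chabauty on the Prym, as in the cited theorem.
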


Individual cases of these theorems were proven in \cite{ShnidmanWeiss}*{\S1.2}; for other results on average Mordell--Weil ranks in families of Prym surfaces see \cite{laga} and \cite[Thm.\ 1.13]{AlpogeBhargavaShnidman}. As usual, one can extract from the proofs of \Cref{thm:qm-pryms} (resp.\  \Cref{thm:qmcurves}) an explicit lower bound on the proportion of twists with $\rk A_d \le 1$ (resp.\ $\#C_d \leq 5$), a bound which in principle depends on $a$ and $b$. For this particular family of curves, it is conceivable that our bound could be made uniform, independent of $a$ and $b$, since the abelian surfaces $A_d$ have everywhere potentially good reduction. To prove this, one would want to prove a version of Tate's algorithm for these surfaces and various isogenous surfaces, or find some other way to access their Tamagawa numbers. This is beyond the scope of this paper, but would be an interesting future project.  

Another large family of curves for which this intersection method applies is the family of genus two curves $C$ admitting {\it potential} $\sqrt{3}$-multiplication and a subgroup $(\Z/3\Z)^2 \hookrightarrow \Jac(C)[3]$ which is isotropic with respect to the Weil pairing. This family is parameterized by a twist of a certain Hilbert modular surface (since the $\sqrt{3}$-multiplication is not defined over $\Q$). An explicit rational parameterization for  this surface was given in \cite[\S2]{bruinflynnshnidman}, on the way to constructing a rational parameterization for the Hilbert modular surface itself. We will not prove any theorems about such abelian surfaces since the ideas are similar. In fact the proofs are easier in this case since these families only admit quadratic twists, not sextic twists.

\subsection{Outline}

We begin in \Cref{sec:encoding} by reinterpreting the notion of ``new rank'' representation-theoretically. Using this interpretation, we construct an abelian variety $B/F$ such that $\Z[\zeta_n]\sub \End_{\bar F}B$, and such that for $d\in H^1(F, \mu_{2n})$, the rank of the corresponding twist $B_d/F$ encodes the new rank of $E(K_d/F)$. Although this family of twists exactly encodes the new rank of $E$, the abelian variety $B$ does not satisfy the main hypothesis of \cite{ShnidmanWeiss}*{Thm.\ 1.1}. In \Cref{sec:avg-rank}, we construct an isogenous abelian variety $A/F$ that does satisfy this hypothesis. Hence, we may apply \cite{ShnidmanWeiss}*{Thm.\ 1.1} and deduce \Cref{thm:rankgainintro}.

In \Cref{sec:pos-prop}, we demonstrate in general how to use intersections of Selmer groups to show that the two Selmer groups $\Sel(\phi_d)$ and $\Sel(\psi_d)$ vanish simultaneously for a positive proportion of $d$. The key result of this section is \Cref{prop:ab-surface-pos-prop}, whose proof uses the correlation trick described above. Using this result, in \Cref{sec:proof-pos-prop}, we prove \Cref{thm:pos-prop}. In \Cref{subsec:example}, we show how to make \Cref{prop:ab-surface-pos-prop} quantitative, using the curve $E\:y^2 + 2xy - y = x^3$ as an example. In \Cref{sec:hilbert} we deduce \Cref{thm:hilbert} from \Cref{thm:pos-prop}. 
 Finally, in \Cref{sec:QM}, we apply \Cref{prop:ab-surface-pos-prop} in the context of Prym abelian surfaces to prove \Cref{thm:qm-pryms}.

\section{Encoding the new rank of $E$}\label{sec:encoding}

Let $F$ be a number field and let $E$ be an elliptic curve over $F$. The goal of this section is to construct, for each $d\in F\t/F^{\times 2n}$, an abelian variety $B_d$ over $F$, whose rank encodes the ``new rank'' of $E$ for the extension $K_d/F$.

First, for any extension $K/F$, we define a slight variant of the new rank, $\rk E(K/F)\gnew$,
which we then interpret from a  representation-theoretic point of view. When $K =K_d =  F(\!\sqrt[2n]d)$, it will turn out that $\rk E(K/F)\new = \rk E(K/F)\gnew$, so the results of this section will be relevant to the proof of \Cref{thm:rankgainintro}.

\subsection{The new rank of an elliptic curve}\label{sec:new-rank}

\subsubsection{Galois extensions}

Let $K/F$ be a finite Galois extension and let $G = \Gal(K/F)$. The group  $E(K)$ is a $G$-module, and we define a $G$-module quotient that corresponds to the ``new points''. 

\begin{definition}
    Set $E(K/F)\gnew = E(K)/\sum_LE(L)$, where the sum is over the subfields $F\subset L\subsetneq K$ that are Galois over $F$.
\end{definition}

Since we are concerned only with ranks (and not torsion properties), we will mostly consider the $G$-representation $V :=E(K)\tensor_\Z\Q$. We can decompose $V$ as a $\Q[G]$-module: $V = \bigoplus_\rho V_\rho$, where the sum is over all irreducible $\Q$-representations $\rho$ of $G$, and $V_\rho$ is the $\rho$-isotypic part of $V$, spanned by the images of all $G$-morphisms from $\rho$ to $V$. 

\begin{definition}
    Let $V\new$ denote the representation $\bigoplus_{\eta} V_\eta$, where the sum is over all \emph{faithful} irreducible $\Q$-representations $\eta$ of $G$.
\end{definition}

\begin{proposition}\label{prop:new-rank-equivalence}
    We have $E(K/F)\gnew\tensor_\Z\Q \simeq V\new$ and $\rk E(K/F)\gnew = \dim_\Q V\new$.
\end{proposition}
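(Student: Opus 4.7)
The plan is to reduce the statement to a decomposition of $V$ into $\Q$-irreducible isotypic components and analyze which components survive the quotient by $\sum_L E(L)$. First I would translate the subgroups $E(L) \subseteq E(K)$ into Galois-theoretic terms: the intermediate Galois subfields $F \subseteq L \subsetneq K$ correspond bijectively to the nontrivial normal subgroups $H = \Gal(K/L)$ of $G$, and $E(L) = E(K)^H$ by Galois descent. Since tensoring with $\Q$ is exact and commutes with taking $H$-invariants in characteristic zero, this gives
\[
E(K/F)\gnew \otimes_\Z \Q \;\cong\; V \Big/ \sum_{1 \neq H \unlhd G} V^H,
\]
so the claim reduces to identifying the denominator on the right with $\bigoplus_{\rho \text{ not faithful}} V_\rho$.

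Next I would analyze each $\Q$-isotypic component separately via the decomposition $V^H = \bigoplus_\rho (V_\rho)^H$. For a non-faithful $\Q$-irreducible $\rho$ with $N := \ker \rho \neq 1$, $N$ is normal in $G$ and acts trivially on $V_\rho$, so $V_\rho \subseteq V^N$ is visibly contained in the sum. For a faithful $\Q$-irreducible $\eta$, the key claim is that $\eta^H = 0$ for every nontrivial normal subgroup $H$. This is a Schur-type observation: because $H$ is normal, the subspace $\eta^H$ is $G$-stable, since for $g \in G$ we have $g \cdot \eta^H = \eta^{gHg^{-1}} = \eta^H$. By $\Q$-irreducibility of $\eta$, this forces $\eta^H \in \{0, \eta\}$, and the second option would imply $H \subseteq \ker \eta = 1$. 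Passing from $\eta$ to its multiple copies in $V_\eta$ gives $(V_\eta)^H = 0$ for every nontrivial normal $H$.

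Combining the two cases yields $\sum_{1 \neq H \unlhd G} V^H = \bigoplus_{\rho \text{ not faithful}} V_\rho$, so the quotient is exactly $\bigoplus_\eta V_\eta = V\new$; the rank formula then follows from $\dim_\Q(E(K/F)\gnew \otimes_\Z \Q) = \rk E(K/F)\gnew$, which holds because $E(K/F)\gnew$ is a finitely generated abelian group. The only step requiring thought rather than bookkeeping is the faithful case, and it genuinely uses the normality of $H$; this is precisely why the proposition is formulated for $E(K/F)\gnew$ (sum over \emph{Galois} subfields) rather than the finer $E(K/F)\new$ from the introduction, since non-Galois intermediate subfields would yield non-$G$-stable invariant subspaces and defeat the irreducibility argument.
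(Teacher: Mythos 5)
Your proof is correct and takes essentially the same route as the paper's: both identify the kernel of the projection $V \to V\new$ with $\sum_{H} V^H = \sum_L E(L)\tensor_\Z\Q$, where $H$ runs over the nontrivial normal subgroups of $G$, using the normality of $H$ to make the $H$-invariants a $G$-subrepresentation. The only difference is that you spell out the Schur-type argument showing a faithful irreducible has no nonzero invariants under a nontrivial normal subgroup, which the paper records as a one-line observation.
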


\begin{proof}
      Let $L$ be a proper subextension of $K$ that is Galois over $F$, and let $H = \Gal(K/L)$. If $W\sub V$ is any $G$-subrepresentation, then since $H$ is normal in $G$, the subspace $W^H\sub W$ of $H$ fixed vectors is a $G$-subrepresentation.  
      
      Observe that a representation $W$ of $G$ is a direct sum of faithful irreducible representations if and only if $W^H=0$ for every non-trivial normal subgroup $H\lhd G$. Thus, $V\new$ is the largest $G$-subrepresentation $W$ of $V$ such that $W^H = 0$ for all non-trivial normal subgroups $H\lhd G$.
      
      The kernel of the projection $V \to V\new$ is spanned by the subrepresentations $W$ of $V$ such that $W^H = W$ 
      for some non-trivial $H\lhd G$. In other words, the kernel is spanned by the subspaces $E(L) \otimes_\Z \Q$, where $L$ is a proper Galois subextension of $K/F$. It follows that $E(K/F)\gnew \otimes_\Z \Q = V\new$. 
\end{proof}

\begin{remark}\label{rem:arbitrary-extension-rank-equivalence}
    For any field extension $L/\Q$, define $V_L = E(K)\tensor_\Z L$ and define $V_L\new$ in the analogous way. Then the same argument shows that $E(K/F)\gnew\tensor_\Z L = V_L\new$. 
\end{remark}

\begin{remark}\label{rem:Galois-new}
    Let $W= \Ind^{G} 1$ be the regular representation. As a representation of $G$, we can write $W = \bigoplus_\rho W_\rho$, where the sum is over the irreducible rational representations $\rho$ of $G$. As before, let $W\new = \bigoplus_{\eta}W_{\eta}$ be the subrepresentation of $W$, where the sum is over all faithful irreducible rational representations $\eta$ of $G$.  Let $V_{\l}(E) = T_\l(E)\tensor_{\Zl}\Ql$ be the $\l$-adic Galois representation attached to $E$. Then the (equivariant) Birch and Swinnerton-Dyer conjecture predicts the equality 
    \[\rk E(K/F)\gnew \stackrel{?}{=} \ord_{s=1} L(V_\l(E)\tensor W\new, s),\]
    where $V_\l(E)\tensor W\new$ is viewed as a representation of $\Ga F$. Note that this $L$-function is defined over $F$. On the other hand, it does not seem that $ E(K/F)\new$, as defined in the introduction, should correspond to an $L$-function over $F$ in general.
\end{remark}

\subsubsection{Non-Galois extensions}
Let $K/F$ be an arbitrary finite extension of number fields.  
 For any extension $L/F$ contained in $K$ we write $\widetilde{L}$ for its Galois closure over $F$. We let $N = \widetilde{K}$ be the Galois closure of $K$ itself.
\begin{definition}
    Let $E(K/F)\gnew = E(K)/\sum_LE(L)$, where the sum is over the subfields $F\sub L\subsetneq K$, such that the Galois closure of $L$ is a proper subfield of $N$.
\end{definition}

Let $V(N) = E(N)\tensor_\Z\Q$, and define $V(N)\new = \bigoplus_{\eta}V_\eta$, as in the previous section, where $\eta$ runs over the faithful irreducible $\Q$-representations of $\Gal(N/F)$. By \Cref{prop:new-rank-equivalence}, we have a short exact sequence of $\Gal(N/F)$-representations
\begin{equation}\label{eq:v-new-exact}
0\to V(N)\old\to V(N)\to V(N)\new\to 0,    
\end{equation}
where 
\begin{equation}\label{eq:v-old}
    V(N)\old = \sum_{\stackrel{F\sub L\subsetneq N}{L/F\text{ Galois}}}E(L)\tensor_\Z\Q.
\end{equation}

The $G$-new points of $E$ for $K/F$ are compatible with the $G$-new points of $E$ for $N/F$ in the following sense. 

\begin{proposition}\label{prop:new-rank-equivalence-non-galois}
    We have $E(K/F)\gnew\tensor_\Z\Q = (V(N)\new)^{\Gal(N/K)}$.  
\end{proposition}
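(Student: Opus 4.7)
The plan is to mimic the proof of the Galois case \Cref{prop:new-rank-equivalence} by passing to the Galois closure $N$ and taking invariants for $H = \Gal(N/K)$. First, since $\Q$ is flat over $\Z$, one has $V(N)^H = E(N)^H\tensor_\Z\Q = E(K)\tensor_\Z\Q$. Set $G = \Gal(N/F)$. The sequence \eqref{eq:v-new-exact} is a short exact sequence of $\Q[G]$-modules, so semisimplicity of $\Q[G]$ splits it, and since $H$-invariants is exact in characteristic $0$ we obtain a decomposition $V(N)^H = (V(N)\old)^H\oplus (V(N)\new)^H$. Let $X\sub V(N)^H$ denote the image of the natural map $\bigoplus_LE(L)\tensor_\Z\Q \to V(N)^H$, where $L$ runs over intermediate fields $F\sub L\subsetneq K$ with $\widetilde L\subsetneq N$. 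Right exactness of $\tensor\Q$ gives $E(K/F)\gnew\tensor_\Z\Q = V(N)^H/X$, so the proof reduces to showing $X = (V(N)\old)^H$.

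For the inclusion $X\sub (V(N)\old)^H$, each admissible $L$ has $\widetilde L$ a proper Galois subextension of $N/F$, so $E(L)\tensor\Q\sub E(\widetilde L)\tensor\Q\sub V(N)\old$; and this subspace automatically lies in $V(N)^H$ because $L\sub K$. For the reverse inclusion, the fact that $H$-invariants distributes over sums of subrepresentations in characteristic $0$ (via averaging over $H$) gives $(V(N)\old)^H = \sum_{L'}(E(L')\tensor\Q)^H$, summed over proper Galois subextensions $L'$ of $N/F$. The key computation is the identity $(E(L')\tensor\Q)^H = E(L'\cap K)\tensor\Q$: writing $E(L')\tensor\Q = V(N)^{\Gal(N/L')}$ (flatness again), the intersection with $V(N)^H$ is the subspace fixed by the subgroup generated by $\Gal(N/L')$ and $H$, which equals $\Gal(N/(L'\cap K))$; its fixed subspace is $E(L'\cap K)\tensor\Q$.

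The main bookkeeping step, and the only place where the argument genuinely uses that $N$ is the Galois closure of $K$, is verifying that $L := L'\cap K$ belongs to the index set defining $X$. The containment $\widetilde L\sub L'\subsetneq N$ gives $\widetilde L\subsetneq N$ immediately. The strict inclusion $L\subsetneq K$ is the delicate part: if $L = K$ then $L'\supset K$, and $L'/F$ being Galois would force $L'\supset \widetilde K = N$, contradicting $L'\subsetneq N$. Once this is established, combining the two inclusions yields $X = (V(N)\old)^H$, whence $E(K/F)\gnew\tensor_\Z\Q = V(N)^H/X = (V(N)\new)^H$.
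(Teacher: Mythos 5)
Your proof is correct and follows essentially the same route as the paper's: take $\Gal(N/K)$-invariants of the sequence \eqref{eq:v-new-exact}, reduce to identifying $(V(N)\old)^{\Gal(N/K)}$ with the sum of the $E(L)\tensor\Q$ over admissible $L$, and use the key identity $(E(L')\tensor\Q)^{\Gal(N/K)} = E(L'\cap K)\tensor\Q$ together with the check that $L'\cap K$ is a proper subfield of $K$ with Galois closure properly contained in $N$. The only cosmetic difference is that you commute invariants past the sum by averaging over $H$, where the paper instead uses the isotypic decomposition of $V(N)\old$ for the corresponding inclusion; both are valid, and you make explicit the small point ($L'\cap K\subsetneq K$) that the paper leaves implicit.
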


\begin{proof}
    From ($\ref{eq:v-new-exact}$), we have a short exact sequence of vector spaces
    \[0\to  (V(N)\old)^{\Gal(N/K)}\to V(N)^{\Gal(N/K)}\to (V(N)\new)^{\Gal(N/K)}\to 0.\]
    Since 
    \[E(K/F)\gnew\tensor_\Z\Q = \dfrac{E(K)\tensor_\Z\Q}{\dsp\sum_{\stackrel{F\sub L\subsetneq K}{\widetilde{L} \neq N}}E(L)\tensor_\Z\Q},\] 
    it remains to show that \[(V(N)\old)^{\Gal(N/K)}=\br{\sum_{\stackrel{F\sub L\subsetneq N}{L/F\text{ Galois}}}E(L)\tensor_\Z\Q}^{\Gal(N/K)}= \sum_{\stackrel{F\sub L\subsetneq K}{\widetilde{L} \neq N}}E(L)\tensor_\Z\Q.\]
    We have
    \[(V(N)\old)^{\Gal(N/K)} =\br{\bigoplus_\mu V_\mu}^{\Gal(N/K)}= \bigoplus_\mu V_\mu^{\Gal(N/K)},\]
    where the sum is over all $\Q$-representations of $\Gal(N/F)$ that are not faithful.
     
    Let $\mu$ be a non-faithful representation of $\Gal(N/F)$ with kernel $\Gal(N/L)$ for some non-trivial Galois extension $L/F$. Then $L\cap K$ is a subfield of $K$ whose Galois closure is a proper subfield of $N$. By definition, $V_\mu^{\Gal(N/L)} = V_\mu$. Hence, 
    \[V_\mu^{\Gal(N/K)} = V_\mu^{\Gal(N/K)\cdot\Gal(N/L)} = V_\mu^{\Gal(N/L\cap K)} \subset E(L\cap K)\tensor_\Z \Q.\]
    It follows that
    \[(V(N)\old)^{\Gal(N/K)}  = \bigoplus_\mu V_\mu^{\Gal(N/K)}\sub \sum_{\stackrel{F\sub L\subsetneq K}{\widetilde{L} \neq N}}E(L)\tensor_\Z\Q.\]
     
    Conversely, if $F\sub L\subsetneq N$, then 
    \[E(L)^{\Gal(N/K)} = E(L)^{\Gal(N/K)\cdot \Gal(N/L)} = E(L)^{\Gal(N/L\cap K)} = E(L\cap K).\] 
    Moreover, if $L/F$ is Galois, then the Galois closure of $L\cap K$ is not $N$. Hence, by ($\ref{eq:v-old}$), we see that
    \begin{align*}
    (V(N)\old)^{\Gal(N/K)}&= \br{\sum_{\stackrel{F\sub L\subsetneq N}{L/F\text{ Galois}}}E(L)\tensor_\Z\Q}^{\Gal(N/K)}\\&\supset\sum_{\stackrel{F\sub L\subsetneq N}{L/F\text{ Galois}}}E(L\cap K)\tensor_\Z\Q\\&= \sum_{\stackrel{F\sub L\subsetneq K}{\widetilde{L} \neq N}}E(L)\tensor_\Z\Q,    
    \end{align*}
which finishes the proof.
\end{proof}

\subsection{Abelian varieties with $\zeta$-multiplication}\label{subsec:zetamult}

We recall the definition of an abelian variety over $F$ with $\zeta$-multiplication and its twists; for further details, see \cite{ShnidmanWeiss}*{\S 2}. Let $A/F$ be an abelian variety, and let $\zeta = \zeta_n \in \overline F^\times$ be a primitive $n$-th root of unity, where $n$ is an odd prime power.

\begin{definition}
 An abelian variety $A/F$ has $\zeta_n$-multiplication if there is a $G_F$-equivariant injective ring homomorphism $\Z[\zeta_n]\hookrightarrow\End_{\bar F} A$.
\end{definition}

If $d\in F\t$, then we define $A_d$ to be twist of $A$ corresponding to the cocycle that is the image of $d$ under 
\[F\t\to F\t/F^{\times 2n}\simeq H^1(F, \mu_{2n}) \to H^1(F, \Aut_{\bar F}(A)).\]

If $A$ has $\zeta$-multiplication, then the endomorphism $1-\zeta\: A_{F(\zeta)}\to A_{F(\zeta)}$ over $F(\zeta)$ descends to an isogeny defined over $F$, which we denote by  $\pi \colon A \to A^{(1)}$ \cite{ShnidmanWeiss}*{\S 2}. The abelian variety $A^{(1)}$ is the twist of $A$ corresponding to the cocycle $\sigma \mapsto \frac{1-\zeta^\sigma}{1-\zeta}\in H^1(F, \Z[\zeta]\t)$ \cite{ShnidmanWeiss}*{Lem.\ 2.3}. In particular, if $\zeta = \zeta_3$, $A^{(1)} \cong A_{-27}$, but, in general, $A^{(1)}$ is not isomorphic to $A_d$ for any $d$. 

The abelian variety $A^{(1)}$ also has $\zeta$-multiplication, and we let $\pi_d\:A_d\to A^{(1)}_d$ denote the twist of $\pi$ corresponding to $d\in F\t/F^{\times 2n}$. 

\subsection{An auxiliary abelian variety}\label{subsec:aux}

Let $n = 3^m$ for some $m\ge 1$ and let $\zeta = \zeta_n$. Fix an element $d\in F\t/F^{\times 2n}$ of order $2n$, and let $K_d = F(\!\sqrt[2n]d)$. We  will define an abelian variety $B/F$ with $\zeta$-multiplication, such that $\rk B_d(F) = \rk E(K_d/F)\new$.

View $\Z[\zeta]$ as a right $G_F$-module.  Following \cite{MRS}*{Thm.\ 1.8}, we define the abelian variety 
\[B:= \Z[\zeta] \otimes_\Z E\]
over $F$, which has dimension $2\cdot 3^{m-1}$. By \cite{MRS}*{Cor.\ 1.7},  there are $G_F$-equivariant ring embeddings $\Z[\zeta] \hookrightarrow \End_\Z(\Z[\zeta])\hookrightarrow \End_{\bar F}(B)$, so that $B$ has $\zeta$-multiplication. Concretely, the $\zeta$-multiplication on $B$ is by left multiplication on the $\Z[\zeta]$-factor.

\begin{example}\label{ex:Bform=1}
Suppose that $m = 1$ and $F = \Q$. Then $\Z[\zeta]$ is a rank two $\Z$-module and  $B$ is an abelian surface. The embedding $\tau \colon E \hookrightarrow B$ defined by $P \mapsto 1 \otimes P$, has cokernel $B/\tau(E)$ isomorphic to $(\Z[\zeta]/\Z) \otimes E$. The latter is isomorphic to the quadratic twist $E_{-3}$, since $\Z[\zeta]/\Z$ is free of rank one with Galois action factoring through $\Gal(\Q(\zeta)/\Q) = \Gal(\Q(\sqrt{-3})/\Q)$. On the other hand, we can embed $E_{-3}$ in $B$ via $P \mapsto \sqrt{-3} \otimes P$. If $E$ and $E_{-3}$ are not isogenous, then both $\Hom_\Q(E,B)$ and $\Hom_\Q(E_{-3}, B)$ must have rank 1. They are visibly generated by the two inclusions already mentioned. Since the composition $E_{-3} \hookrightarrow B \to E_{-3}$ is multiplication by 2, the map $B \to E_{-3}$ has no section, and the intersection of $E$ and $E_{-3}$ in $B$ is via the isomorphism $\eta \colon E[2] \simeq E_{-3}[2]$. The upshot is that $B \simeq (E \times E_{-3})/\Delta$, where $\Delta$ is the graph of $\eta$. By specialization, this is true even if $E$ is isogenous to $E_{-3}$.
\end{example}

\begin{lemma}\label{lem:twists-isomorphic}
    If $i\in (\Z/2n\Z)\t$, then $B_{d}\simeq B_{d^i}$ for any $d\in F\t/F^{\times 2n}$.
\end{lemma}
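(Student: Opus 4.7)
The plan is to produce an explicit $F$-rational automorphism $\psi$ of $B$ that conjugates the Kummer twist cocycle for $d$ into the twist cocycle for $d^i$, so that the two cocycles represent the same class in $H^1(F, \Aut_{\bar F}(B))$. Recall that the composite $F^\times/F^{\times 2n} \simeq H^1(F, \mu_{2n}) \to H^1(F, \Aut_{\bar F}(B))$ sends $d$ to the class of the cocycle $c_d(\tau) = m_{\tau(\sqrt[2n]{d})/\sqrt[2n]{d}}$, where $m_\beta$ denotes left multiplication by $\beta \in \mu_{2n} = \langle -\zeta \rangle \subset \Z[\zeta]^\times$ on the tensor factor $\Z[\zeta]$ of $B = \Z[\zeta] \otimes_\Z E$. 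Choosing $(\sqrt[2n]{d})^i$ as a $2n$-th root of $d^i$, one obtains $c_{d^i}(\tau) = m_{k(\tau)^i}$, where $k(\tau) = \tau(\sqrt[2n]{d})/\sqrt[2n]{d}$. Thus the whole statement reduces to finding $\psi \in \Aut_F(B)$ satisfying $\psi \circ m_\beta \circ \psi^{-1} = m_{\beta^i}$ for every $\beta \in \mu_{2n}$.

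To construct such a $\psi$, I would reduce $i$ modulo $n$ to get $\bar{i} \in (\Z/n\Z)^\times$ and use the ring automorphism $\phi_i \colon \Z[\zeta] \to \Z[\zeta]$ given by $\zeta \mapsto \zeta^{\bar{i}}$. Since $\gcd(i,2n) = 1$ forces $i$ to be odd, one checks directly that $\phi_i(-\zeta^j) = -\zeta^{j\bar{i}} = (-\zeta^j)^i$, so $\phi_i$ restricts to the $i$-th power map on $\mu_{2n}$. Setting $\psi := \phi_i \otimes \mathrm{id}_E$ as an automorphism of $B_{\bar F} = \Z[\zeta] \otimes_\Z E_{\bar F}$, the relation $\psi \circ m_\beta = m_{\phi_i(\beta)} \circ \psi$ for every $\beta \in \Z[\zeta]$ is immediate from the definitions.

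The remaining point, and essentially the only one with any content, is that $\psi$ is $F$-rational. The Galois group $G_F$ acts on the factor $\Z[\zeta]$ through its quotient $\Gal(F(\zeta)/F) \subset (\Z/n\Z)^\times$, which is abelian; since $\phi_i$ is induced by an element of this same abelian group, it commutes with the Galois action on $\Z[\zeta]$, so $\psi$ descends to an automorphism of $B$ over $F$. Putting everything together, for every $\tau \in G_F$ we have $c_{d^i}(\tau) = m_{k(\tau)^i} = \psi \circ m_{k(\tau)} \circ \psi^{-1} = \psi \circ c_d(\tau) \circ \psi^{-1}$, which exhibits $c_d$ and $c_{d^i}$ as cohomologous (via the $F$-rational element $\psi^{-1}$), giving $B_d \simeq B_{d^i}$. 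I do not anticipate a genuine obstacle; the argument is a direct cocycle manipulation, and the only thing to keep track of is the compatibility between the $(\Z/2n\Z)^\times$-action on $\mu_{2n}$ and the $(\Z/n\Z)^\times$-action on $\Z[\zeta]$, which uses crucially that $n$ is odd.
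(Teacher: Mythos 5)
Your argument is correct and rests on the same key point as the paper's proof: the automorphism $-\zeta\mapsto(-\zeta)^i$ of $\Z[\zeta]$ acts as the $i$-th power map on $\mu_{2n}$ (using that $i$ is odd and $\zeta$ has odd order) and commutes with the Galois action, hence induces an $F$-rational automorphism of $B=\Z[\zeta]\otimes_\Z E$ identifying the two twists. The paper packages this as an isomorphism of twisted Galois modules $\I_d\simeq\I_{d^i}$ together with the functoriality of the Serre tensor construction, whereas you conjugate the classifying cocycles in $H^1(F,\Aut_{\bar F}(B))$ directly; the two formulations are interchangeable.
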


\begin{proof}
    Let $\I_d = \Z[\zeta]$ with the following twisted Galois action: if $\sigma\in G_F$ and $x\in \I_d$, we define \[x^{\sigma_d} = \frac{\sqrt[2n]d}{\sqrt[2n]d^\sigma}x^\sigma,\]
    where $x^\sigma$ is the usual Galois action on $\Z[\zeta]$. Viewing $\I_d$ as a $\Z$-module with a $G_F$ action, we can define the abelian variety $\I_d \tensor_\Z E$. 
    
    We show that this abelian variety is isomorphic to $B_d$. Over $F(\!\sqrt[2n]d)$, the isomorphism $\I_d\to \I_1 = \Z[\zeta]$ defines an isomorphism $\psi\: \I_d \tensor_\Z E \to B$ by \cite{MRS}*{Cor.\ 1.9}. The cocycle $\psi^\sigma\psi\ii\in \Aut(B)$ is the map $\sigma\mapsto \frac{\sqrt[2n]d^\sigma}{\sqrt[2n]d}$. Indeed, if $x\tensor P\in B$, then
    \begin{align*}
    \psi^\sigma\psi\ii(x\tensor P) &= \sigma\circ\psi\circ\sigma_d\ii\circ\psi\ii (x\tensor P)\\ &=\sigma\circ\psi (\frac{\sqrt[2n]d}{\sqrt[2n]d^{\sigma\ii}}x^{\sigma\ii}\tensor P^{\sigma\ii})\\
    &= \frac{\sqrt[2n]d^\sigma}{\sqrt[2n]d}(x\tensor P).    
    \end{align*}
By Kummer theory, this is precisely the cocycle which classifies $B_d$. Hence, $\I_d \tensor_\Z E\simeq B_d$.

Now, as $\Z[G_F]$-modules, there is an isomorphism $\I_d\simeq\I_{d^i}$ given by $-\zeta\mapsto(-\zeta)^{i}$. Hence, by \cite{MRS}*{Cor.\ 1.9}, $B_d\simeq B_{d^i}$.
\end{proof}

\begin{proposition}\label{lem:new-rank-comparison}
    Let $d\in F\t/F^{\times 2n}$. If $d$ has order $2n$ as an element of $F(\zeta)\t/F(\zeta)^{\times 2n}$, then $\rk E(K_d/F)\new = \rk B_d(F)$.
\end{proposition}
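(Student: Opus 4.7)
The plan is to compare $B_d(F)\otimes\Q$ directly with the space $(V(N)^{\mathrm{new}})^{\Gal(N/K_d)}$ appearing in \Cref{prop:new-rank-equivalence-non-galois}, and then verify that the gnew rank agrees with the new rank in this setting. Using the identification $B_d(\bar F) = I_d\otimes_\Z E(\bar F)$ supplied by the proof of \Cref{lem:twists-isomorphic} (where $I_d = \Z[\zeta]$ carries the twisted $G_F$-action $\sigma_d$), I set $N = F(\zeta, \sqrt[2n]{d})$, which is the Galois closure of $K_d$ over $F$, and observe that both the twisted action on $I_d$ and the natural action on $E(\bar F)$ factor through $G := \Gal(N/F)$ after passing to $E(N)$. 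Tensoring with $\Q$ and taking $G$-invariants yields
\[
B_d(F)\otimes\Q \;=\; (W\otimes_\Q V(N))^{G},
\]
where $W := I_d\otimes\Q \cong \Q(\zeta)$ carries the twisted $G$-action and $V(N) := E(N)\otimes\Q$.

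Next, I would unpack the structure of $G$. The hypothesis that $d$ has order $2n$ in $F(\zeta)^\times/F(\zeta)^{\times 2n}$ forces $[N:F(\zeta)] = 2n$, whence $[K_d:F] = 2n$ and $G \cong \mu_{2n}\rtimes H$, where $H := \Gal(F(\zeta)/F)$; crucially, $H$ is identified with $\Gal(N/K_d)$ via the splitting. Unwinding the definition of the twisted action shows that on $W$ the subgroup $\mu_{2n}$ acts by multiplication by $\tau^{-1}\in\Q(\zeta)^\times$, while $H$ acts via its natural action on $\Q(\zeta)$. In particular, $\mu_{2n}$ acts faithfully on $W$.

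The core of the argument is to identify $W$, as a $G$-representation, with the sum of faithful $G$-isotypic components of the permutation representation $\mathrm{Ind}_H^G\mathbf{1} \cong \Q[\mu_{2n}]$. Using the decomposition $\Q[\mu_{2n}] \cong \prod_{k\mid 2n}\Q(\zeta_k)$, this faithful part is precisely the $k=2n$ factor $\Q(\zeta_{2n}) = \Q(\zeta)$, with $\mu_{2n}$ acting by multiplication by $\tau$ and $H$ acting naturally. Although this differs from $W$ in the sign of the $\mu_{2n}$-action, the involution $\sigma_{-1}\in\Gal(\Q(\zeta)/\Q)$ inverting roots of unity defines a $G$-isomorphism between the two: by construction it intertwines the $\mu_{2n}$-actions, and it commutes with $H$ since $\Gal(\Q(\zeta)/\Q)$ is abelian. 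Combining this with the self-duality of the permutation representation, Frobenius reciprocity, and the vanishing of $\mathrm{Hom}_G$ between faithful and non-faithful isotypic pieces gives
\[
(W\otimes_\Q V(N))^G \;=\; \mathrm{Hom}_G(W^\vee, V(N)^{\mathrm{new}}) \;=\; \mathrm{Hom}_G(\mathrm{Ind}_H^G\mathbf{1}, V(N)^{\mathrm{new}}) \;=\; (V(N)^{\mathrm{new}})^H,
\]
which by \Cref{prop:new-rank-equivalence-non-galois} equals $E(K_d/F)^{\mathrm{gnew}}\otimes\Q$.

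To finish, I would upgrade $\mathrm{gnew}$ to $\mathrm{new}$: the intermediate fields of $F\subset K_d = F(\sqrt[2n]{d})$ are the $F(\sqrt[k]{d})$ for $k\mid 2n$, each of which has Galois closure contained in $F(\zeta_k, \sqrt[k]{d})\subsetneq N$. Hence no proper subfield of $K_d$ has $N$ as its Galois closure, so $E(K_d/F)^{\mathrm{gnew}} = E(K_d/F)^{\mathrm{new}}$. The main obstacle in this plan is the representation-theoretic identification in the third paragraph, where one must carefully track the twisted cocycle and not confuse $W$ with its dual; a useful sanity check is the case $F = \Q$, $n = 3$, where $G = D_{12}$ has a unique faithful irreducible $\Q$-representation (of dimension $2$) and the comparison reduces to matching dimensions.
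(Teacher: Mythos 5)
Your argument is correct, and it takes a genuinely different route from the paper's. The paper splits into two cases: when $\zeta\in F$ it works over $\C$, decomposes $E(K_d)\otimes\C$ into eigenspaces for the powers $\epsilon^i$ of the Kummer character, identifies the $\epsilon^i$-eigenspace with $B_{d^i}(F)\otimes_{\Z[\zeta]}\C$, and then collapses the sum over $i\in(\Z/2n\Z)^\times$ using the isomorphisms $B_d\simeq B_{d^i}$ of \Cref{lem:twists-isomorphic}; when $\zeta\notin F$ it reduces to the Galois case over $F(\zeta)$ and descends via \Cref{prop:new-rank-equivalence-non-galois}. You instead treat both cases uniformly over $\Q$: starting from $B_d(F)\otimes\Q=(W\otimes V(N))^G$ with $W=\I_d\otimes\Q$, you identify $W$ with the faithful part of the permutation module $\Q[G/\Gal(N/K_d)]$ and finish with self-duality and Frobenius reciprocity. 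This buys a cleaner, case-free argument that never passes to $\C$ and does not need \Cref{lem:twists-isomorphic}, at the cost of a more delicate representation-theoretic identification. Two points there deserve care. First, to see that the $\Phi_{2n}$-factor of $\Q[\mu_{2n}]$ is exactly the faithful part, it is not enough that $\mu_{2n}$ acts faithfully on that factor: you must rule out a nontrivial normal subgroup $K\lhd G$ with $K\cap\mu_{2n}=1$ acting trivially; such a $K$ would centralize $\mu_{2n}$ while injecting into $H=\Gal(F(\zeta)/F)$, which acts faithfully on $\mu_{2n}$, so $K=1$ — worth recording explicitly. Second, in the last paragraph you do not need (and should not rely on) the claim that every intermediate field of $K_d/F$ is some $F(\sqrt[k]{d})$, which is subtle for radical extensions; the statement you actually need — that no proper subfield of $K_d$ has Galois closure equal to $N$ — follows directly from the group theory: any subgroup of $G$ strictly containing $\Gal(N/K_d)$ meets $\mu_{2n}$ in a nontrivial subgroup, which is characteristic in $\mu_{2n}$ and hence normal in $G$, so the normal core is nontrivial. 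With these two details filled in, your proof is complete.
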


\begin{proof}
    First suppose that $\zeta\in F$, so that $K_d/F$ is Galois, and let $G = \Gal(K_d/F)$. For this extension, we have  $E(K_d/F)\new  = E(K_d/F)\gnew$,
    so we can use the representation-theoretic interpretation of \Cref{sec:new-rank}. 
    
    By the assumption that $d$ has order $2n$, we have  $G \simeq C_{2n}$. Let $\epsilon\:G\to\C\t$ be the character 
    $\sigma\mapsto \sqrt[2n]d^{\sigma - 1}$, 
    so that the set of faithful irreducible $\C$-valued representations of $G$ is $\{\epsilon^i : i\in (\Z/2n\Z)\t\}$. By \Cref{prop:new-rank-equivalence} and the subsequent remark, we have
    \[E(K_d/F)\new\tensor_\Z\C = \bigoplus_{i\in (\Z/2n\Z)\t} V_i,\]
    where
    \[V_i := \set{v\in E(K_d)\tensor_\Z\C : \sigma(v) = \epsilon^i(\sigma)v,\ \forall\sigma\in G}.\]
    
    On the other hand, we can view $B(K_d)$ as a finitely generated $\Z[\zeta][G]$-module. As in the proof of \cite{MRS}*{Thm.\ 2.2}, we have $B(K_d) = (E\tensor_\Z \Z[\zeta])(K_d) = E(K_d)\tensor_\Z\Z[\zeta]$. Hence, as $\C$-vector spaces, we have $B(K_d)\tensor_{\Z[\zeta]}\C = E(K_d)\tensor_\Z\C$. 
    
    Thus, we may view $V_i$ as a subspace of $B(K_d)\tensor_\Z\C$. But since $B_{d^i}$ is the twist of $B$ corresponding to the cocycle $\epsilon^i$, we have 
    \[V_i = B_{d^i}(F)\tensor_{\Z[\zeta]}\C,\]
    and hence
    \[\dim_{\C}V_i = \rk_{\Z[\zeta]}B_{d^i}(F) = \frac{1}{2\cdot 3^{m-1}}\rk B_{d^i}(F).\]
    
    It follows that
    \[\rk E(K_d/F)\new = \frac{1}{2\cdot 3^{m-1}}\sum_{i\in (\Z/2n\Z)\t}\rk B_{d^i}(F) = \rk B_d(F),\]
    where the final equality follows from \Cref{lem:twists-isomorphic}.
    
    Now suppose that $\zeta\notin F$, let $K_d = F(\!\sqrt[2n]d)$, and let $N_d = K_d(\zeta)$ be its Galois closure. Then we have $E(N_d/F)\gnew = E(N_d/F(\zeta))\gnew$. Indeed, by assumption, $d$ has order $2n$ in $F(\zeta)\t/F(\zeta)^{\times 2n}$. Hence, if $L/F$ is a Galois extension with $L\subsetneq N_d$, then $L(\zeta)/F(\zeta)$ is a Galois extension and $L(\zeta)\subsetneq N_d$.
  
    Therefore, as before, we have
    \[E(N_d/F)\gnew\tensor\C =E(N_d/F(\zeta))\gnew\tensor\C = \bigoplus_{i\in (\Z/2n\Z)\t} B_{d^i}(F(\zeta))\tensor_{\Z[\zeta]}\C.\]
    By \Cref{prop:new-rank-equivalence-non-galois}, we have
    \[E(K_d/F)\gnew\tensor\C = (E(N_d/F)\gnew \tensor\C)^{\Gal(N_d/K_d)} = \bigoplus_{i\in (\Z/2n\Z)\t} B_{d^i}(F)\tensor_{\Z[\zeta]}\C.\]
  It follows that
    \[\rk E(K_d/F)\new = \rk E(K_d/F)\gnew = \frac{1}{2\cdot 3^{m-1}}\sum_{i\in (\Z/2n\Z)\t}\rk B_{d^i}(F) = \rk B_d(F),\]
    where the final equality follows from \Cref{lem:twists-isomorphic}.  
\end{proof}

\section{The average rank of $B_d$}\label{sec:avg-rank}

In this section we prove \Cref{thm:rankgainintro}. Let $E$ be as in the theorem and let $B$ be the abelian variety defined in Section \ref{subsec:aux}. By \Cref{lem:new-rank-comparison}, to prove \Cref{thm:rankgainintro}, it is enough to prove that the average rank of $B_d(F)$ is bounded. 

\subsection{Average ranks in cyclotomic twist families}

We recall a general average rank result for a twist family of an abelian variety $A/F$ with $\zeta$-multiplication (see \Cref{subsec:zetamult}).  

There is a natural height function on $F\t/F^{\times 2n}$, which we now define. Let $M_\infty$ be the set of archimedean places of $F$. For each $d \in F\t/F^{\times 2n}$ we choose a lift $d_0 \in F^\times$, and then define the ideal $I = \{a \in F \colon a^{2n} d_0 \in \O_F\}.$

\begin{definition}\label{def:height}
The height of $d$ is  $H(d) = \Nm(I)^{2n} \prod_{v \in M_\infty} |d_0|_v.$
\end{definition}

This definition is independent of the lift $d_0$, by the product formula. If $F = \Q$, then $H(d) = |d_0|$, where $d_0$ is the unique $2n$-th power free integer representing $d$. For any $X > 0$, the set  \[\Sigma_X = \{d \in F\t/F^{\times 2n} \colon H(d) < X\}\] 
is finite. Thus, we can define the average rank of $A_d(F)$ to be
\[\avg_d \rk A_d(F) = \lim_{X \to \infty} \underset{d \in \Sigma_X}\avg \rk A_d(F). \]
If the limsup is finite, we say that the \emph{average rank of $A_d(F)$ is bounded}.

Recall from \Cref{subsec:zetamult} that there exists an isogeny $\pi \colon A \to A^{(1)}$, which is a descent of $1-\zeta$ to $F$. Let $A[\pi]$ denote the kernel of this isogeny.

\begin{theorem}[\cite{ShnidmanWeiss}*{Thm.\ 1.1}]\label{thm:completely-reducible}
Let $A$ be an abelian variety with $\zeta_{3^m}$-multiplication over $F$. If the $G_F$-module $A[\pi]$ is a direct sum of characters, then $\avg_d \rk A_d(F)$ is bounded.
\end{theorem}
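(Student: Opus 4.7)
Since the statement is quoted from prior work, my proposal is to outline the strategy one would use, paralleling that of the cited paper. The plan is to bound the rank by Selmer groups, decompose the Selmer groups according to the character decomposition of $A[\pi]$, and then average over twists.

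From the descent exact sequences attached to $\pi_d\colon A_d\to A_d^{(1)}$ and to its dual isogeny, together with the fact that $\pi_d\hat\pi_d$ differs from multiplication by $3^m$ by a unit of $\Z[\zeta_n]$, one obtains a bound
\[
\rk A_d(F)\leq c\bigl(\dim_{\F_3}\Sel(\pi_d)+\dim_{\F_3}\Sel(\hat\pi_d)\bigr)
\]
for some $c=c(\dim A,n)$. The abelian variety $A^{(1)}$ also has $\zeta_n$-multiplication, and its $\pi$-torsion (Cartier dual to $A[\pi]$) is again a direct sum of characters, so the second summand can be handled identically. It therefore suffices to bound the average of $\dim_{\F_3}\Sel(\pi_d)$ as $d$ varies.

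Next, writing $A[\pi]\cong\bigoplus_i\F_3(\chi_i)$ and letting $\kappa_d\colon G_F\to(\Z/2n\Z)^\times\to\F_3^\times$ be the mod-$3$ character attached to the Kummer class of $d^{1/2n}$, one has $A_d[\pi_d]\cong\bigoplus_i\F_3(\chi_i\kappa_d)$. Hence
\[
\Sel(\pi_d)\;\subseteq\;\bigoplus_i H^1(F,\F_3(\chi_i\kappa_d)),
\]
with each summand cut out by local Selmer conditions. Via Kummer theory and class field theory, each such summand is controlled by the $3$-part of a ray class group of the cyclic extension of $F$ cut out by $\chi_i\kappa_d$, modulo explicit local constraints at the primes of bad reduction of $A_d$, which are essentially the primes of $F$ dividing $d$ together with a fixed finite set.

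The remaining step is to average over $d$ of bounded height. This reduces to a counting problem: show that, on average, the number of ray-class cohomology classes satisfying the imposed local constraints is bounded. Such a count is handled by a geometry-of-numbers/sieve argument adapted to the $\mu_{2n}$-twist setting, analogous to Heath-Brown's treatment of $2$-Selmer of quadratic twists or the cubic-twist analyses of Bhargava--Elkies--Shnidman. The main obstacle, and the technical heart of the prior paper, is controlling the interaction between the local conditions at the many primes dividing $d$ and the global class-group contribution: one needs a uniform bound, as $d$ varies, on the number of classes that are everywhere locally trivial at the bad primes, essentially a second-moment estimate for these twisted Selmer groups.
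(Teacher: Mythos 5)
This statement is quoted verbatim from \cite{ShnidmanWeiss}*{Thm.\ 1.1} and is used in the present paper only as a black box; the paper contains no proof of it, so there is nothing internal to compare your argument against. Your outline does, however, correctly reconstruct the architecture of the cited proof: bound $\rk A_d(F)$ via $\Sel(\pi_d)$ and $\Sel(\widehat\pi_d)$ using the factorization of $[3]$ through $\pi_d$ and its conjugates (exactly how the theorem is deployed in (\ref{eq:bounds}) and \S\ref{sec:QM}); use the hypothesis that $A[\pi]$ is a direct sum of characters, so that $H^1(F,A_d[\pi_d])$ splits into lines $H^1(F,\F_3(\chi_i\kappa_d))$, where $\kappa_d$ is the quadratic character through which the $\mu_{2n}$-cocycle acts on each $\F_3$-line; and thereby reduce to averaging sizes of Selmer groups of $3$-isogenies with character kernel over the twist family. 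The substantive caveat is that your last step --- the ``counting problem'' --- \emph{is} the theorem: the cited paper's key input (its Thm.\ 5.2, as invoked repeatedly here) is an exact first-moment identity of the shape $\avg_d\#\Sel(\phi_d)=1+\avg_d\,c(\phi_d)$ for such isogenies, obtained by parametrizing Selmer elements by algebraic data with prescribed local behaviour at the primes dividing $d$, rather than the second-moment sieve estimate you gesture at; and your proposal supplies no argument for either version. So as a description of strategy your proposal is accurate and consistent with how the result is used in this paper, but as a proof it defers the entire analytic content to the very statement being quoted.
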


\subsection{An isogenous abelian variety}

To apply \Cref{thm:completely-reducible} to the abelian variety $B$, we would need to know that the $G_F$-representation $B[\pi]$ is a direct sum of characters. However, the following lemma shows that this is not the case in general.

\begin{lemma}\label{lem:pitorsion}
    There is an isomorphism of $\F_3[G_F]$-modules $B[\pi]\xrightarrow{\sim} E_{-3}[3]$.
\end{lemma}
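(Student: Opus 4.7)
The strategy is to identify $B[\pi]$ as a Tor term arising from a short exact sequence of $G_F$-modules obtained by tensoring with $E$.

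First, I would write down the relevant SES. Let $\Z[\zeta]^{(0)}$ denote $\Z[\zeta]$ with its standard Galois action, and let $\Z[\zeta]^{(1)}$ be the twist such that $B^{(i)} = \Z[\zeta]^{(i)} \otimes_\Z E$ in the sense of Serre's tensor construction; concretely, $\Z[\zeta]^{(1)}$ is $\Z[\zeta]$ with Galois action $\sigma \cdot x = \frac{1-\zeta}{1-\zeta^\sigma}\,x^\sigma$. A direct check shows that multiplication by $1-\zeta$ defines a $G_F$-equivariant injection $\Z[\zeta]^{(0)} \hookrightarrow \Z[\zeta]^{(1)}$ whose tensor with $E$ is the isogeny $\pi$. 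Its cokernel is $\Z[\zeta]/(1-\zeta) \cong \F_3$ as a $\Z$-module, since $(1-\zeta)$ is the unique prime of $\Z[\zeta_n]$ above $3$.

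Next, I would pin down the Galois action on this cokernel. Letting $\chi_n\: G_F \to (\Z/n\Z)\t$ denote the mod-$n$ cyclotomic character, the identity $\frac{1-\zeta^\sigma}{1-\zeta} = 1 + \zeta + \cdots + \zeta^{\chi_n(\sigma)-1}$ reduces modulo $(1-\zeta)$ (where $\zeta \equiv 1$) to the scalar $\chi_n(\sigma) \bmod 3 = \chi_3(\sigma)$. Hence the cokernel is isomorphic to $\mu_3$ as a $G_F$-module, yielding a Galois-equivariant SES
\[0 \to \Z[\zeta]^{(0)} \xrightarrow{1-\zeta} \Z[\zeta]^{(1)} \to \mu_3 \to 0.\]

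Finally, I would tensor with $E(\bar F)$ (as $\Z[G_F]$-modules) and invoke the $\operatorname{Tor}$ long exact sequence. Since $\Z[\zeta]^{(i)}$ is $\Z$-flat and $E(\bar F)$ is $3$-divisible (so that $\mu_3 \otimes_\Z E(\bar F) = 0$), the sequence collapses to
\[0 \to \operatorname{Tor}_1^{\Z}(\mu_3, E(\bar F)) \to B(\bar F) \xrightarrow{\pi} B^{(1)}(\bar F) \to 0.\]
Using the $G_F$-equivariant free resolution $0 \to \Z(\chi_{-3}) \xrightarrow{3} \Z(\chi_{-3}) \to \mu_3 \to 0$ (valid since $\chi_3 = \chi_{-3}$, both cutting out $F(\sqrt{-3})/F$), the Tor term is $E(\bar F)[3]$ with Galois action twisted by $\chi_{-3}$, which is exactly $E_{-3}[3]$.

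The main obstacle is tracking the twist conventions carefully: one must use the twisted module $\Z[\zeta]^{(1)}$ as the target of $1-\zeta$ (not the source), and identifying the cokernel as $\mu_3$ rather than as trivial $\F_3$ hinges on the explicit unit computation above. Once the correct SES of Galois modules is in hand, the Tor calculation is routine.
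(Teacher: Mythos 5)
Your proof is correct, and its essential content coincides with the paper's: both arguments come down to the identity $\frac{1-\zeta^{\chi(\sigma)}}{1-\zeta}=1+\zeta+\cdots+\zeta^{\chi(\sigma)-1}\equiv\chi(\sigma)\pmod{1-\zeta}$, which shows that the one-dimensional $\F_3$-subquotient of $\Z[\zeta]$ at the prime $(1-\zeta)$ carries the mod-$3$ cyclotomic character, followed by the identification of $\overline\chi\otimes E[3]$ with $E_{-3}[3]$. The difference is only in the homological packaging: the paper works inside $B[3]\simeq\Z[\zeta]/3\Z[\zeta]\otimes_\Z E[3]$ and locates $B[\pi]$ as $\p^{r-1}/\p^{r}\otimes_\Z E[3]\simeq \p^{-1}/\Z[\zeta]\otimes_\Z E[3]$ via the $(1-\zeta)$-adic filtration, whereas you realize $\pi$ as the tensor of a short exact sequence of twisted lattices with $E$ and read off $B[\pi]\simeq\mathrm{Tor}_1^{\Z}(\mu_3,E(\bar F))$ from the long exact sequence. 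Your version makes the appearance of $\mu_3$ (hence of the quadratic twist by $-3$) conceptually transparent, at the cost of having to justify the identification $\Z[\zeta]^{(1)}\otimes_\Z E\simeq B^{(1)}$ (which follows from the cocycle computation as in \Cref{lem:twists-isomorphic}, and which in any case is not needed to compute the kernel, since only the source and the map matter there); the paper's version is more hands-on and also produces the explicit representatives $\frac{3}{1-\zeta}\otimes P$ of points of $B[\pi]$, which are reused in the proof of \Cref{lem:completely-reducible}.
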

\begin{proof}
By \cite{MRS}*{Thm.\ 2.2}, there is a $G_F$-equivariant isomorphism 
\[B[3] \simeq \Z[\zeta] \otimes_\Z E[3] \simeq \Z[\zeta]/3\Z[\zeta] \otimes_\Z E[3].\] Let $\p = (1-\zeta)\Z[\zeta]$, so that $\p^r = 3\Z[\zeta]$, for $r = 2\cdot 3^{m-1}$.  Then 
\[B[\pi] \simeq \p^{r-1}/\p^r \otimes_\Z E[3] \simeq \p^{-1}/\Z[\zeta] \otimes_\Z E[3].\]  
Let $\chi\colon G_F\to \Z_3\t\hookrightarrow \Z_3[\zeta]\t$ denote the $3$-adic cyclotomic character, and let $\overline\chi \equiv \chi\pmod{1-\zeta}$ be the mod $3$ cyclotomic character.
 Using the identity
\[\br{\frac{1-\zeta^{\chi(\sigma)}}{1-\zeta}} = 1 +\zeta + \cdots + \zeta^{\chi(\sigma)-1} \equiv\chi(\sigma)\pmod{(1-\zeta)},\]
we see that the $G_F$-action on the one-dimensional $\F_3$-vector space $\p^{-1}/\Z[\zeta]$ is by $\overline\chi$. Thus, 
\[B[\pi] \simeq \overline\chi \otimes_\Z E[3] \simeq (\overline\chi \otimes E)[3] \simeq E_{-3}[3].\] 
Explicitly, if $P \in E[3](\overline{F})$, then $\frac{3}{1-\zeta} \otimes P \in B[\pi](\overline{F})$.  
\end{proof}

Hence, to apply \Cref{thm:completely-reducible}, we instead consider an abelian variety $A$ that is isogenous to $B$, for which $A[\pi]$ is completely reducible. Recall that $E$ admits a $3$-isogeny $\theta\: E\to E'$. Let $\tau \colon  E_{-3}[\theta_{-3}]\hookrightarrow B[\pi]$ be the embedding induced by \Cref{lem:pitorsion}. 

\begin{definition}\label{def:A}
    Define $A = B/\tau(E_{-3}[\theta_{-3}])$.
\end{definition}

By \cite{ShnidmanWeiss}*{Lem.\ 2.6}, the fact that $E_{-3}[\theta_{-3}]\sub B[\pi]$ ensures that $A$ also has $\zeta$-multiplication. We use a slight abuse of notation and write $\pi = \pi_A \colon A \to A^{(1)}$ for the descent of $1-\zeta \in \End_{\bar F}(A)$ to $F$. 

\begin{lemma}\label{lem:completely-reducible}
    There is an isomorphism $A[\pi]\simeq  E_{-3}'[\widehat\theta_{-3}]\times E[\theta]$ of $\F_3[G_F]$-modules. 
\end{lemma}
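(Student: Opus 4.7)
My plan is to exhibit $A[\pi]$ as an internal direct sum of two $G_F$-stable subgroups of order~$3$: one coming from $B[\pi]/K$ and visibly isomorphic to $E'_{-3}[\widehat\theta_{-3}]$, the other constructed via the Serre tensor structure on $B$ and isomorphic to $E[\theta]$.

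For the first summand, since $K = \ker(B\twoheadrightarrow A)$ is contained in $B[\pi]$, the quotient $B[\pi]/K$ injects $G_F$-equivariantly into $A[\pi]$. By \Cref{lem:pitorsion}, $B[\pi]\simeq E_{-3}[3]$ via an identification sending $K = \tau(E_{-3}[\theta_{-3}])$ to $E_{-3}[\theta_{-3}]\subset E_{-3}[3]$, so $B[\pi]/K\simeq E'_{-3}[\widehat\theta_{-3}]$.

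For the second summand, I would use the explicit description of $B[3]$ from the Serre tensor formula \cite{MRS}*{Thm.\ 2.2}. Writing $y = 1-\zeta$ and $r = \varphi(3^m) = 2\cdot 3^{m-1}$, one has $B[3]\simeq \F_3[y]/(y^r)\otimes_{\F_3}E[3]$ as $\F_3[G_F]$-modules (Galois acting on $y$ by $y\mapsto c(\sigma)\,y$ where $c(\sigma) = (1-\zeta^\sigma)/(1-\zeta)\equiv \chi(\sigma)\pmod y$), and under this identification $B[\pi] = y^{r-1}\otimes E[3]$ and $K = y^{r-1}\otimes E[\theta]$. For $P\in E[\theta]$ we have $\pi(y^{r-2}\otimes P) = y^{r-1}\otimes P\in K$, so $s(P) := y^{r-2}\otimes P + K$ defines a map $s\colon E[\theta]\to A[\pi]$. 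The crucial point is its $G_F$-equivariance: the discrepancy $\sigma(y^{r-2}\otimes P) - y^{r-2}\otimes\sigma(P) = (c(\sigma)^{r-2}-1)\,y^{r-2}\otimes\sigma(P)$ is divisible by $y^{r-1}$, since $\chi(\sigma)^{r-2}\equiv 1\pmod 3$ (as $r-2 = 2(3^{m-1}-1)$ is even), and its $y^{r-1}$-part lies in $y^{r-1}\otimes E[\theta] = K$.

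Finally, the images of $s(E[\theta])$ and $B[\pi]/K$ in $A[\pi] = \pi^{-1}(K)/K$ are represented by elements of $y^{r-2}\otimes E[\theta]$ and $y^{r-1}\otimes E[3]$, which lie in distinct graded pieces of the direct sum $\pi^{-1}(K) = y^{r-2}\otimes E[\theta]\oplus y^{r-1}\otimes E[3]$; since $K = y^{r-1}\otimes E[\theta]$ is contained in the second piece, the two subgroups meet trivially in $A[\pi]$. Since $A$ inherits $\zeta$-multiplication of $\Z[\zeta]$-rank~$2$, the isogeny $\pi$ has degree $N_{\Q(\zeta)/\Q}(1-\zeta)^2 = 9$, equal to the product of the orders of the two summands, so the decomposition $A[\pi]\simeq E[\theta]\oplus E'_{-3}[\widehat\theta_{-3}]$ is forced. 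The main obstacle I anticipate is the Galois-equivariance of $s$; the parity of $r-2$, which stems from the evenness of $\varphi(3^m)$, is essential.
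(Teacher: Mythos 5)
Your proof is correct and follows essentially the same route as the paper's: the first summand is the image of $B[\pi]/\tau(E_{-3}[\theta_{-3}])$, and your section $s(P) = y^{r-2}\otimes P + K$ is, up to a unit in $\Z[\zeta]$, the paper's embedding $Q\mapsto \frac{3}{(1-\zeta)^2}\otimes Q$. The only difference is that you spell out the Galois-equivariance of $s$ (via the parity of $r-2$) and the triviality of the intersection, which the paper treats as evident.
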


\begin{proof}
    We can identify  $E'_{-3}[\widehat\theta_{-3}]\simeq E_{-3}[3]/E_{-3}[\theta_{-3}] \subset A$. Its image in $A[\pi]$ is represented by elements of the form $\frac{3}{1-\zeta} \otimes P$.  On the other hand, we can embed $E[\theta] \hookrightarrow A$ by sending $Q$ to the image of $\frac{3}{(1-\zeta)^2} \otimes Q$ in $A$. This element is evidently killed by $\pi$, since $\frac{3}{1-\zeta} \otimes E[\theta]$ is 0 in $A$.  The image of this copy of $E[\theta]$ is also visibly independent of $E'_{-3}[\widehat\theta_{-3}]$.
 \end{proof}

\begin{proof}[Proof of Theorem $\ref{thm:rankgainintro}$]
    By \Cref{lem:completely-reducible}, the finite $G_F$-module $A[\pi]$ is a direct sum of characters. Hence, by \Cref{thm:completely-reducible}, the average $\avg_d\rk A_d(F)$ is bounded. Since $A_d$ and $B_d$ are isogenous, it follows that $\avg_d\rk B_d(F)$ is bounded. Now, all but finitely many $d\in F\t/F^{\times 2n}$ of order $2n$ have order $2n$ in $F(\zeta)$ as well. Indeed, the kernel of the map $F\t/F^{\times 2n}\to F(\zeta)\t/F(\zeta)^{\times 2n}$ is isomorphic to $H^1(\Gal(F(\zeta)/F), \mu_{2n})$, which is a finite group. For these $d$, we have $\rk B_d(F) = \rk E(K_d/F)\new$, by \Cref{lem:new-rank-comparison}, so the average rank of $E(K_d/F)\new$ is indeed bounded.
\end{proof}

\section{Intersecting Selmer groups and the correlation trick}\label{sec:pos-prop}

We consider a general situation involving twist families of abelian surfaces with $\zeta_3$-multiplication.  The main result of the section is \Cref{prop:ab-surface-pos-prop}, which will be used in the proof of \Cref{thm:pos-prop}.  

\subsection{Set-up}

Let $F$ be a number field, and let $\zeta = \zeta_3$ be a primitive cube root of unity in $\overline F^\times$. Let $A/F$ be an abelian surface with $\zeta$-multiplication, as in Section \ref{subsec:zetamult}. We assume that $A$ admits a polarization $\lambda \colon A \to \widehat A$ whose degree is not divisible by $3$ and such that the Rosati involution $\alpha \mapsto \lambda^{-1} \widehat\alpha \lambda$ on $\End(A)$ restricts to complex conjugation on the subring $\Z[\zeta]$. We also assume that $A[\pi](F) = \langle P, Q\rangle$, or in other words, that $A[\pi]$ has trivial $G_F$-action.

For each $d\in F^\times$, recall that $A_d$ is the twist of $A$ corresponding to the cocycle $\xi_\sigma =  \sqrt[6]d^{\sigma-1}$ in $H^1(F, \mu_{6})$. The isomorphism $\Aut(A)\to\Aut(\widehat{A})$ sending $f\mapsto\widehat{f}\ii$ induces an isomorphism $H^1(F, \Aut(A))\simeq H^1(F, \Aut(\widehat{A}))$, which we will denote by $\xi \mapsto \widehat{\xi}\ii$.

\begin{definition}
    Define $(\widehat{A})_d$ to be the twist of $\widehat{A}$ corresponding to the cocycle $\widehat{\xi}\ii$.
\end{definition}

\begin{lemma}
    Let $\widehat A_d$ be the dual of $A_d$. Then $(\widehat{A})_d\simeq\widehat{A}_d$.
\end{lemma}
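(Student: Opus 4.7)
The statement is a formal consequence of the contravariance of the duality functor, and my plan is a short cocycle chase; there is no real obstacle of content, only a bookkeeping one to get the order of composition and the inversion right.

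First, unwind the definition of $A_d$ as a twist: there is an $\overline F$-isomorphism $\phi \colon A_d \to A$ whose coboundary realises the cocycle $\xi$, i.e.\ $\phi^\sigma \phi^{-1} = \xi_\sigma$ for all $\sigma \in G_F$, with $\xi_\sigma = \sqrt[6]{d}^{\sigma-1} \in \mu_6 \subset \Aut(A)$. Applying the contravariant functor $\widehat{\phantom A}$ produces an $\overline F$-isomorphism $\widehat\phi \colon \widehat A \to \widehat{A_d}$. Since duality reverses the order of composition and commutes with the Galois action on morphisms defined over $\overline F$, we obtain
\[
\widehat\phi^{\,\sigma} \;=\; \widehat{\phi^\sigma} \;=\; \widehat{\xi_\sigma \phi} \;=\; \widehat\phi \circ \widehat{\xi_\sigma}.
\]
Setting $\psi := \widehat\phi^{\,-1} \colon \widehat{A_d} \to \widehat A$, this rearranges to $\psi^\sigma \psi^{-1} = \widehat{\xi_\sigma}^{\,-1}$. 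In other words, the pair $(\widehat{A_d},\psi)$ realises $\widehat{A_d}$ as the twist of $\widehat A$ by the cocycle $\widehat{\xi}^{\,-1}$, which is exactly what $(\widehat A)_d$ was defined to be. Hence $\widehat{A_d} \simeq (\widehat A)_d$ canonically over $F$.

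The proof uses only the defining property of twisting and the formal contravariance of $\widehat{\phantom A}$; in particular it makes no use of the $\zeta$-multiplication, the polarisation $\lambda$, or the assumption that $A[\pi]$ is $G_F$-trivial, and it works equally well for any cocycle in $\Aut(A)$ rather than just for $\mu_6$-valued ones. Those hypotheses are genuinely needed for the subsequent results of the section, but for this lemma the content is purely a contravariant cocycle identity.
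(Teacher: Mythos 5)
Your proof is correct and is essentially the paper's own argument: a direct cocycle computation using that dualization is contravariant and Galois-equivariant, showing the class of $\widehat{A}_d$ is the image of $\xi$ under $\xi\mapsto\widehat{\xi}^{\,-1}$. The only difference is bookkeeping — you orient the isomorphism as $A_d\to A$ rather than $A\to A_d$, which lets you avoid the cocycle relation $\xi_{\sigma^{-1}}^{\sigma}\xi_\sigma=1$ that the paper invokes at the last step.
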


\begin{proof}
 Let $f\:A\to A_d$ be an isomorphism over $K$. Then, by definition, we have $\xi_\sigma = f\ii f^\sigma$. By duality, we have an isomorphism $\widehat f\:\widehat{A}_d\to \widehat{A}$. Consider the cocycle $\widehat{f}(\widehat{f\ii})^\sigma$ corresponding to the twist $\widehat{A}_d$. Then
\[\widehat{f}(\widehat{f\ii})^\sigma = \widehat{(f\ii)^\sigma f} = (\widehat{f\ii f^{\sigma\ii})^\sigma} = \widehat{\xi_{\sigma\ii}^\sigma} = \widehat{\xi_\sigma\ii}.\]
Here, the final equality comes from the cocyle relation $1 = \xi_1 = \xi_{\sigma\ii}^\sigma\xi_\sigma $. 
Thus, the cocycle corresponding to $\widehat{A}_d$ is $\widehat\xi\ii$, the same cocycle used to define $(\widehat A)_{d}$. It follows that $\widehat{A}_d\simeq (\widehat A)_{d}$.
\end{proof}

For each $d\in F^\times$, let $\pi_d \colon A_d \to A_{-27d}$ be the $d$-th twist of $\pi \colon A \to A^{(1)} = A_{-27}$. The endomorphism $[3]\:A_d\to A_d$ factors as $u\circ\pi_{-27d}\circ\pi_d$, for some automorphism $u$.
Let $\widehat\pi_d \colon \widehat{A}_{-27d}\to \widehat A_d$ denote the isogeny dual to $\pi_d$.

\begin{lemma}\label{lem:equal-selmer}
 We have  $\Sel(\pi_{-27d}) \simeq \Sel(\widehat\pi_d)$.
\end{lemma}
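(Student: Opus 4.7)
The plan is to use the polarization $\lambda$, together with the Rosati hypothesis, to exhibit an $F$-rational commutative-up-to-units square of isogenies relating $\pi_{-27d}$ with $\widehat \pi_d$. Since $\lambda$ has degree coprime to $3$, the resulting identifications on $3$-torsion will force the two Selmer groups to agree.

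First, I would verify that $\lambda$ is invariant under the $\mu_6$-twisting action, so that twisting produces natural polarizations $\lambda_d \colon A_d \to \widehat{A}_d$ of the same degree as $\lambda$. This is where the Rosati hypothesis enters: the identity $\widehat \zeta = \lambda \circ \bar\zeta \circ \lambda^{-1}$, together with $\zeta \bar \zeta = 1$, gives $\widehat \zeta \circ \lambda \circ \zeta = \lambda$, which is exactly the condition that $\lambda \in \Hom(A, \widehat A)$ lies in the $\mu_6$-invariants.

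Second, I would use the identity $1-\bar\zeta = -\zeta^{-1}(1-\zeta)$ in $\Z[\zeta]^\times$ to establish a compatibility square
\[
\lambda_d \circ \pi_{-27d} \; = \; u \circ \widehat{\pi}_d \circ \lambda_{-27d}
\]
over $F$, for some unit $u \in \Z[\zeta]^\times \subseteq \Aut(\widehat{A}_d)$. The verification takes place over $\overline F$, where the twists trivialize: by the Rosati hypothesis, $\widehat \pi$ is $\lambda$-conjugate to the descent of $1-\bar\zeta$, while $\pi_{-27}$ is the descent of $1-\zeta$, and these two endomorphisms differ exactly by the unit $-\zeta^{-1}$. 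Twisting both sides by $d$ via functoriality then gives the identity over $F$, since $-\zeta^{-1} \in \mu_6$ remains a well-defined $F$-rational automorphism after twisting.

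Finally, since $\lambda_d$ and $\lambda_{-27d}$ restrict to $G_F$-equivariant isomorphisms on $3$-torsion (their degrees being coprime to $3$), the compatibility square furnishes an isomorphism of the degree-$3$ isogenies $\pi_{-27d}$ and $\widehat \pi_d$ up to pre- and post-composition with automorphisms of their (co)domains. As Selmer groups are invariant under such isomorphisms, this yields $\Sel(\pi_{-27d}) \simeq \Sel(\widehat \pi_d)$. The main obstacle will be the bookkeeping in the second step: one must check that the unit $u$ descends to an $F$-rational automorphism after twisting, and that the local Selmer conditions at each place are transported correctly under $\lambda_d$ and $\lambda_{-27d}$. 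Both issues reduce ultimately to the single algebraic identity $1-\bar\zeta = -\zeta^{-1}(1-\zeta)$ in $\Z[\zeta]$ combined with the Rosati hypothesis.
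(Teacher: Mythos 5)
Your proposal is correct and follows essentially the same route as the paper: descend the polarization to an $F$-rational prime-to-$3$ isogeny $\lambda_d \colon A_d \to \widehat A_d$ using the Rosati hypothesis, fit $\pi$ and $\widehat\pi$ into a commuting square with $\lambda_d$ and $\lambda_{-27d}$, and conclude by comparing Selmer groups of the two equal composites. The only cosmetic difference is that you track the unit $-\zeta^{-1}$ explicitly (the paper absorbs it by asserting the square commutes, citing Howe for the descent of the polarization), which is harmless since units act as automorphisms and do not affect Selmer groups.
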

\begin{proof}
Let $K = F(\zeta, \sqrt[6]d)$ and let $f\: A_K\to (A_d)_K$ be an isomorphism. By \cite[Prop.\ 2.2]{Howe-isogenyclasses} and the condition on the Rosati involution, the polarization $\widehat{f\ii}\lambda_Kf\ii$ of $(A_d)_K$ descends to a polarization of $A_d$ over $F$, which we will denote by $\lambda_d\: A_d\to \widehat A_d$.  Consider the diagram
\[\begin{tikzcd}
A_d \arrow[r, "\lambda_d"] \arrow[d, "\pi_d"'] & \widehat A_d \arrow[d, "\widehat\pi_{-27d}"] \\
A_{-27d} \arrow[r, "\lambda_{-27d}"]           & \widehat A_{-27d}                           
\end{tikzcd}\]

The definition of $\lambda_d$ and the condition on the Rosati involution shows that this diagram commutes. In particular, $\Sel(\lambda_{-27d}\circ\pi_d) \simeq \Sel(\widehat\pi_{-27d}\circ\lambda_d)$, and since $\lambda_d$ is prime-to-$3$, it follows that $\Sel(\pi_d)\simeq \Sel(\widehat\pi_{-27d})$.
\end{proof}

By the Lemma and the equality $[3] = u \circ \pi_{-27} \circ \pi_d$, to control $\Sel_3(A_d)$ it is enough to control the two Selmer groups $\Sel(\pi_d)$ and $\Sel(\widehat{\pi}_d)$. 

\subsection{Selmer ratios}

Let $\alpha \colon X \to Y$ be an isogeny of abelian varieties over $F$. The global Selmer ratio of $\alpha$ is by definition the product $c(\alpha) = \prod_v c_v(\alpha)$ of local Selmer ratios 
\[c_v(\alpha) = \dfrac{\#\mathrm{coker}\left(X(F_v) \to Y(F_v)\right)}{\#\mathrm{ker}\left(X(F_v) \to Y(F_v)\right)},\]
one for each place $v$ of $F$.
 If $\deg(\alpha)$ is a power of $3$, then for $v \nmid 3\infty$, we have $c_v(\alpha) = c_v(Y)/c_v(X)$, where $c_v(X)$ is the Tamagawa number of $X$ over $F_v$ \cite{schaefer}*{Lem.\ 3.8}. Thus, up to some subtle factors at places $v$ above $3$ and $\infty$, the number $c(\alpha)$ is the ratio of the global Tamagawa numbers $c(Y)/c(X)$. In particular, we have $c_v(\alpha) \in 3^\Z$, and $c_v(\alpha) = 1$ for all but finitely many $v$.

Let $\widehat{\alpha}\:\widehat{Y}\to \widehat{X}$ be the dual isogeny. Then the Greenberg-Wiles formula \cite[8.7.9]{NSW:cohomologyofnumberfields} reads
    \begin{equation}\label{eq:GreenbergWiles}
    c(\alpha) =  \dfrac{\#\Sel(\alpha)}{\#\Sel(\widehat\alpha)} \cdot \dfrac{\#\widehat Y[\widehat\alpha](F)}{\# X[\alpha](F)}.
    \end{equation}

\subsection{The correlation trick}

By \Cref{lem:equal-selmer}, we have 
\begin{equation}\label{eq:bounds}
\rk A_d(F) \le \dim_{\F_3}\Sel_3(A_d) \le \dim_{\F_3}(\Sel(\pi_d) \+\Sel(\pi_{-27d})) = \dim_{\F_3}(\Sel(\pi_d) \+ \Sel(\widehat\pi_d)).
\end{equation}
Now, since $A[\pi](F) = \langle P, Q\rangle$, we can factor $\pi_d$ in multiple ways as a chain of $3$-isogenies:
\[\begin{tikzcd}
                                                                               & \left(\frac{A}{\langle P\rangle}\right)_d \arrow[rd, "\phi'_{d}"]            &          \\
A_d \arrow[ru, "\phi_d"] \arrow[rd, "\psi_d"'] \arrow[r, "\eta_d" description] & \left(\frac{A}{\langle P+Q\rangle}\right)_d \arrow[r, "\eta'_d" description] & A_{-27d} \\
                                                                               & \left(\frac{A}{\langle Q\rangle}\right)_d \arrow[ru, "\psi'_d"']             &         
\end{tikzcd}\]

and by duality, we obtain a corresponding factorisation for $\widehat\pi$:
\[\begin{tikzcd}
                                                                                                                              & \widehat{\left(\frac{A}{\langle P\rangle}\right)}_{d} \arrow[rd, "\widehat\phi_d"]              &              \\
\widehat A_{-27d} \arrow[ru, "\widehat\phi'_{d}"] \arrow[rd, "\widehat\psi'_{d}"'] \arrow[r, "\widehat\eta'_{d}" description] & \widehat{\left(\frac{A}{\langle P+Q\rangle}\right)}_{d} \arrow[r, "\widehat\eta_d" description] & \widehat A_d \\
                                                                                                                              & \widehat{\left(\frac{A}{\langle Q\rangle}\right)}_{d} \arrow[ru, "\widehat\psi_d"']             &             
\end{tikzcd}\]

The following theorem is the main result of this section:
\begin{theorem}\label{prop:ab-surface-pos-prop}
    Let $A/F$ be as above. Suppose that the set 
    \[T = \{d\in F^\times/F^{\times 6}: c(\phi_d) = c(\phi'_d) = c(\psi_d) = c(\psi'_d) = 1\}\]
    has positive density. Then for a positive lower density of $d\in F^\times/F^{\times 6}$, we have $\rk A_d(F) = 0$.
\end{theorem}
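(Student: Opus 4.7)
The plan is to combine a Selmer group bound with a correlation argument. First I would bound $\#\Sel_3(A_d)$ in terms of $\#\Sel(\phi_d)$ and $\#\Sel(\psi_d)$ for $d\in T$; then I would apply \Cref{thm:completely-reducible} to each of the three $3$-isogenies $\phi_d$, $\psi_d$, $\eta_d$ over $T$; and finally I would run the min/max correlation trick sketched in the introduction to force $\Sel(\phi_d)$ and $\Sel(\psi_d)$ to vanish simultaneously on a positive-density subset of $T$.

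For the first step, the factorization $[3]_{A_d} = u\circ\pi_{-27d}\circ\pi_d$ gives $\#\Sel_3(A_d)\le \#\Sel(\pi_d)\cdot\#\Sel(\pi_{-27d})$, which by \Cref{lem:equal-selmer} equals $\#\Sel(\pi_d)\cdot\#\Sel(\widehat\pi_d)$; each factor is then controlled via the factorizations $\pi_d = \phi'_d\circ\phi_d = \psi'_d\circ\psi_d$ and their duals. For $d\in T$, the condition $c(\phi_d)=1$, together with the Greenberg--Wiles formula \eqref{eq:GreenbergWiles} and the observation that $A_d[\phi_d](F) = 0 = \widehat A_d[\widehat\phi_d](F)$ for $d$ of order $6$ in $F^\times/F^{\times 6}$ (because the $\mu_6$-twist acts on the trivial $G_F$-module $A[\pi]$ through the quadratic character $\chi_d$), yields $\#\Sel(\phi_d) = \#\Sel(\widehat\phi_d)$, and similarly for $\phi'_d$, $\psi_d$, $\psi'_d$. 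Feeding these symmetries into the factorization bounds produces
\[
\#\Sel_3(A_d)\le \bigl(\#\Sel(\phi_d)\cdot\#\Sel(\psi_d)\bigr)^{2}\qquad(d\in T).
\]

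Next I would apply \Cref{thm:completely-reducible} to each of $\phi_d$, $\psi_d$, $\eta_d$, whose kernels (twists of the $G_F$-trivial group $\langle P\rangle$, $\langle Q\rangle$, $\langle P+Q\rangle$) are characters and hence completely reducible. The defining condition of $T$ pins the Selmer ratios of $\phi_d$ and $\psi_d$ to $1$ throughout $T$, forcing
\[
\mathrm{avg}_{d\in T}\#\Sel(\phi_d) = \mathrm{avg}_{d\in T}\#\Sel(\psi_d) = 2,
\]
while an inspection of the ratios of $\eta_d$ and $\eta'_d$ (which are not forced to equal $1$ on $T$) together with the averages produced by \cite{ShnidmanWeiss}*{Thm.\ 1.1} gives a strict inequality $\mathrm{avg}_{d\in T}\#\Sel(\eta_d) > 1$. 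A diagram chase among the Kummer sequences attached to the three $3$-isogenies (whose kernels all sit inside $A_d[\pi_d]$) then produces an embedding of $\Sel(\eta_d)$ into the natural intersection of $\Sel(\phi_d)$ and $\Sel(\psi_d)$, so that
\[
\mathrm{avg}_{d\in T}\min(\#\Sel(\phi_d),\#\Sel(\psi_d)) \ge \mathrm{avg}_{d\in T}\#\Sel(\eta_d) > 1.
\]
Combined with the identity $\#\Sel(\phi_d) + \#\Sel(\psi_d) = \min + \max$, this forces $\mathrm{avg}_{d\in T}\max(\#\Sel(\phi_d),\#\Sel(\psi_d)) < 3$. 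Since $\max$ takes values only in $\{1,3,9,27,\ldots\}$, the set of $d\in T$ with $\max = 1$ --- equivalently, with $\Sel(\phi_d)=\Sel(\psi_d)=0$ --- has positive lower density; for such $d$ the displayed bound yields $\Sel_3(A_d) = 0$ and hence $\rk A_d(F) = 0$, proving the theorem.

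The principal obstacle I anticipate is the combination of (i) the strict inequality $\mathrm{avg}_{d\in T}\#\Sel(\eta_d) > 1$, which requires enough control on the distribution of the Selmer ratios $c(\eta_d)$ and $c(\eta'_d)$ over $T$ to invoke \cite{ShnidmanWeiss}*{Thm.\ 1.1} effectively, and (ii) the honest embedding $\Sel(\eta_d)\hookrightarrow \Sel(\phi_d)\cap\Sel(\psi_d)$ in a common ambient $H^1$, without which the inequality at the level of averages cannot be transferred to an inequality at the level of $\min$. Both ingredients are essential for the correlation trick to close.
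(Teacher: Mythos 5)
Your proposal is correct and follows essentially the same route as the paper: bound $\Sel_3(A_d)$ via the factorizations of $\pi_d$ and $\widehat\pi_d$, extract the averages $2$, $2$ and $>1$ from the Selmer-ratio conditions defining $T$ (the relevant inputs are \cite{ShnidmanWeiss}*{Thms.\ 5.2, 5.3} on average Selmer sizes, not \Cref{thm:completely-reducible}, which concerns ranks), and then run the min/max correlation argument. The one step you describe imprecisely is exactly the one you flag as the principal obstacle: there is no direct embedding of $\Sel(\eta_d)$ into a common intersection $\Sel(\phi_d)\cap\Sel(\psi_d)$. What \Cref{lem:injection-of-selmer} actually gives are injections $\Sel(\eta_d)\hookrightarrow\Sel(\phi'_d)$ and $\Sel(\eta_d)\hookrightarrow\Sel(\psi'_d)$ into the \emph{second-stage} Selmer groups, since $\phi_d$ and $\psi_d$ carry $\ker\eta_d$ isomorphically onto $\ker\phi'_d$ and $\ker\psi'_d$; one then needs the Selmer-ratio conditions on $T$ together with Greenberg--Wiles to force $\Sel(\psi'_d)=\Sel(\phi_d)$ and $\Sel(\phi'_d)=\Sel(\psi_d)$ (\Cref{cor:identify-maps}), which simultaneously justifies your bound $\#\Sel_3(A_d)\le(\#\Sel(\phi_d)\#\Sel(\psi_d))^2$ and converts the injections into $\#\Sel(\eta_d)\le\min(\#\Sel(\phi_d),\#\Sel(\psi_d))$. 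With that piece supplied, the rest of your argument closes as written.
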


\begin{lemma}[\cite{ShnidmanWeiss}*{Lem.\ 6.2}]\label{lem:injection-of-selmer}
  Let $F$ be any field, and suppose that there is a commutative diagram of isogenies of abelian varieties over $F$,
	\begin{center}
		\begin{tikzcd}
A \arrow[r, "\phi_1"] \arrow[d,"\phi_2"] & B_1 \arrow[d, "\psi_2"] \\
		B_2 \arrow[r, "\psi_1"]       & {C}
		\end{tikzcd}\end{center}
	such that $\phi_2$ maps $A[\phi_1]$ isomorphically onto $B_2[\psi_1]$. Then $\psi_2$ induces an injection
	\[\frac{B_1(F)}{\phi_1(A(F))}\hookrightarrow\frac{C(F)}{\psi_1(B_2(F))}.\]
	As a consequence, if $F$ is a number field, then the map $\phi_2$ induces an embedding $\Sel(\phi_1)\hookrightarrow\Sel(\psi_1)$.
\end{lemma}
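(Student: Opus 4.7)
The plan is to prove the injection on $F$-rational quotients by a concrete diagram chase, and then derive the Selmer consequence formally via Kummer maps.

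First, I would observe that the hypothesis that $\phi_2$ restricts to an isomorphism $A[\phi_1]\xrightarrow{\sim} B_2[\psi_1]$ forces the intersection $A[\phi_1]\cap A[\phi_2]$ inside $A(\overline F)$ to be trivial: any element there lies in $A[\phi_1]$ and is killed by $\phi_2$, which is already injective on $A[\phi_1]$. Next, applying the snake lemma to the two short exact sequences $0\to A[\phi_1]\to A\xrightarrow{\phi_1} B_1\to 0$ and $0\to B_2[\psi_1]\to B_2\xrightarrow{\psi_1} C\to 0$, with vertical maps $\phi_2|_{A[\phi_1]}$, $\phi_2$, $\psi_2$, one learns that $\phi_1$ induces an isomorphism $A[\phi_2]\xrightarrow{\sim} B_1[\psi_2]$: the left vertical is an isomorphism by hypothesis, so its kernel and cokernel vanish, forcing the snake connecting map to identify $A[\phi_2]\simeq B_1[\psi_2]$ through $\phi_1$.

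For the main claim, well-definedness of the induced map $\psi_2\colon B_1(F)/\phi_1(A(F))\to C(F)/\psi_1(B_2(F))$ is immediate from commutativity. For injectivity, suppose $b\in B_1(F)$ with $\psi_2(b)=\psi_1(b')$ for some $b'\in B_2(F)$. Pick any geometric lift $a\in A(\overline F)$ with $\phi_2(a)=b'$, which exists since $\phi_2$ is surjective on geometric points. Commutativity gives $\psi_2(\phi_1(a)-b)=0$, so $\phi_1(a)-b\in B_1[\psi_2](\overline F)$, and the snake-lemma isomorphism lets me write $\phi_1(a)-b=\phi_1(a_0)$ for a unique $a_0\in A[\phi_2](\overline F)$. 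Setting $\tilde a=a-a_0$ yields $\phi_1(\tilde a)=b$ and $\phi_2(\tilde a)=b'$. For every $\sigma\in G_F$, the difference $\tilde a^\sigma-\tilde a$ is then killed by both $\phi_1$ and $\phi_2$, hence lies in $A[\phi_1]\cap A[\phi_2]=0$. So $\tilde a\in A(F)$ and $b=\phi_1(\tilde a)\in \phi_1(A(F))$, establishing injectivity.

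For the Selmer-group statement, the Galois-module isomorphism $\phi_2\colon A[\phi_1]\xrightarrow{\sim} B_2[\psi_1]$ induces isomorphisms $\phi_2^\ast\colon H^1(F,A[\phi_1])\xrightarrow{\sim} H^1(F,B_2[\psi_1])$ and similarly over each completion $F_v$. Tracing cocycles through the Kummer maps, one checks that the square with vertical maps $\psi_2$ (on $B_1(F_v)/\phi_1(A(F_v))\hookrightarrow H^1(F_v,A[\phi_1])$) and $\phi_2^\ast$ commutes, since if $a\in A(\overline{F_v})$ lifts $b\in B_1(F_v)$ along $\phi_1$, then $\phi_2(a)$ lifts $\psi_2(b)=\psi_1(\phi_2(a))$ along $\psi_1$, so the Kummer cocycle $\sigma\mapsto a^\sigma-a$ is sent by $\phi_2$ to the Kummer cocycle of $\psi_2(b)$. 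Applying the $F_v$-version of the injection already proven, $\phi_2^\ast$ carries the local Selmer condition of $\phi_1$ at $v$ into that of $\psi_1$. Hence $\phi_2^\ast$ restricts to the desired embedding $\Sel(\phi_1)\hookrightarrow \Sel(\psi_1)$, injectivity being automatic from injectivity of $\phi_2^\ast$ on the ambient $H^1(F,A[\phi_1])$.

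The only slightly delicate step is the Galois-descent argument for $\tilde a$: one must combine both factorizations $\phi_1(\tilde a)=b$ and $\phi_2(\tilde a)=b'$ to conclude rationality, and the hypothesis on $\phi_2|_{A[\phi_1]}$ is used precisely here via $A[\phi_1]\cap A[\phi_2]=0$. Everything else reduces to snake-lemma bookkeeping and functoriality of the Kummer sequence.
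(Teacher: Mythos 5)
Your proof is correct and is essentially the standard argument for this lemma: the snake-lemma identification $\phi_1\colon A[\phi_2]\xrightarrow{\sim}B_1[\psi_2]$, the descent step using $A[\phi_1]\cap A[\phi_2]=0$ to show the constructed lift $\tilde a$ is $F$-rational, and functoriality of the Kummer maps to transfer local conditions, which is how the cited result \cite{ShnidmanWeiss}*{Lem.\ 6.2} is proved (the present paper only quotes it without proof). The only nitpick is terminological: the isomorphism $A[\phi_2]\simeq B_1[\psi_2]$ comes from exactness of the kernel sequence (the map induced by $\phi_1$), not from the snake connecting homomorphism, but this does not affect the argument.
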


\begin{corollary}\label{cor:identify-maps}
    For almost all $d\in T$, we have $\Sel(\phi_d) = \Sel(\psi'_d)$ and $\Sel(\phi'_{d})= \Sel(\psi_d)$.
\end{corollary}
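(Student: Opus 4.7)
The plan is to upgrade each embedding produced by \Cref{lem:injection-of-selmer} to a bijection using the Greenberg--Wiles formula \eqref{eq:GreenbergWiles}. I focus on showing $\Sel(\phi_d) = \Sel(\psi'_d)$; the equality $\Sel(\phi'_d) = \Sel(\psi_d)$ follows by interchanging the roles of $P$ and $Q$.

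First I would set up the commutative square
\[
\begin{tikzcd}
A_d \arrow[r, "\phi_d"] \arrow[d, "\psi_d"'] & (A/\langle P\rangle)_d \arrow[d, "\phi'_d"] \\
(A/\langle Q\rangle)_d \arrow[r, "\psi'_d"] & A_{-27d}
\end{tikzcd}
\]
which commutes because both compositions equal $\pi_d$. Since $A[\pi](F) = \langle P,Q\rangle$ has trivial $G_F$-action, the left vertical $\psi_d$ restricts to a $G_F$-isomorphism $A_d[\phi_d] \xrightarrow{\sim} (A/\langle Q\rangle)_d[\psi'_d]$; both kernels are canonically the twist of $\langle P\rangle$ by the quadratic character attached to $\sqrt d$. \Cref{lem:injection-of-selmer} therefore gives an embedding $\Sel(\phi_d) \hookrightarrow \Sel(\psi'_d)$, and applying the same lemma to the Cartier-dual square yields $\Sel(\widehat\psi'_d) \hookrightarrow \Sel(\widehat\phi_d)$.

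Next I would invoke the Greenberg--Wiles formula \eqref{eq:GreenbergWiles} for each of $\phi_d$ and $\psi'_d$. The $G_F$-isomorphism above and its Cartier dual give
\[
\#A_d[\phi_d](F) = \#(A/\langle Q\rangle)_d[\psi'_d](F) \quad\text{and}\quad \#\widehat{(A/\langle P\rangle)_d}[\widehat\phi_d](F) = \#\widehat{A_{-27d}}[\widehat\psi'_d](F).
\]
Since $c(\phi_d) = c(\psi'_d) = 1$ for $d \in T$, the torsion ratios in \eqref{eq:GreenbergWiles} for $\phi_d$ and $\psi'_d$ coincide, yielding
\[
\frac{\#\Sel(\phi_d)}{\#\Sel(\widehat\phi_d)} = \frac{\#\Sel(\psi'_d)}{\#\Sel(\widehat\psi'_d)}.
\]
Setting $a = \#\Sel(\phi_d)$, $b = \#\Sel(\psi'_d)$, $c = \#\Sel(\widehat\phi_d)$, $e = \#\Sel(\widehat\psi'_d)$, the three conditions $a \le b$, $e \le c$, and $ae = bc$ jointly force $a = b$ and $c = e$. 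Hence the injection $\Sel(\phi_d) \hookrightarrow \Sel(\psi'_d)$ is in fact a bijection.

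The main technical step to be careful about is verifying that \Cref{lem:injection-of-selmer} applies equally well to the Cartier-dual square, i.e.\ that the dualized kernel identification remains $G_F$-equivariant; this follows from the exactness and functoriality of Cartier duality applied to the identification induced by $\psi_d$. The hypothesis $A[\pi](F) = \langle P,Q\rangle$ is essential throughout: it forces all the twisted kernels to descend uniformly to $F$ as twists of $\F_3$-subgroups of $A[\pi]$ by the quadratic character of $F(\sqrt d)/F$, so that the various local-to-global comparisons go through cleanly. The ``almost all $d$'' qualifier merely accommodates a negligible set of exceptional $d$ where this uniform description could degenerate.
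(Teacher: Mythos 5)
Your proof is correct and takes essentially the same route as the paper: the two injections $\Sel(\phi_d)\hookrightarrow\Sel(\psi'_d)$ and $\Sel(\widehat\psi'_d)\hookrightarrow\Sel(\widehat\phi_d)$ from \Cref{lem:injection-of-selmer}, followed by the Greenberg--Wiles formula and a squeeze. The only (harmless) variation is that the paper uses $c(\phi_d)=c(\psi'_d)=1$ together with the triviality of the rational $3$-torsion for almost all $d\in T$ to get $\#\Sel(\phi_d)=\#\Sel(\widehat\phi_d)$ and $\#\Sel(\psi'_d)=\#\Sel(\widehat\psi'_d)$ separately, whereas you only equate the two ratios via the kernel identifications; both close the same chain of inequalities.
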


\begin{proof}
    By \Cref{lem:injection-of-selmer}, we have $\Sel(\phi_d)\hookrightarrow\Sel(\psi'_d)$ and $\Sel(\widehat\psi'_{d})\hookrightarrow\Sel(\widehat\phi_d)$. For almost all $d\in T$, the abelian varieties in the diagram have no non-trivial rational $3$-torsion points, so by $(\ref{eq:GreenbergWiles})$ we have $\#\Sel(\phi_d) = \#\Sel(\widehat\phi_d)$ and $\#\Sel(\psi'_d) = \#\Sel(\widehat\psi'_d)$. Thus \[\#\Sel(\phi_d)\leq \#\Sel(\psi'_d) = \#\Sel(\widehat\psi'_d)\leq\#\Sel(\widehat\phi_d)= \#\Sel(\phi_d).\]    It follows that $\Sel(\phi_d) = \Sel(\psi'_d)$. The proof of the second equality is identical. 
\end{proof}

\begin{proof}[Proof of Theorem $\ref{prop:ab-surface-pos-prop}$]
    By \cite{ShnidmanWeiss}*{Thm.\ 5.2}, we have $\avg_{d\in T}\#\Sel(\phi_d) = \avg_{d\in T}\#\Sel(\phi'_d) = 2$, and by \cite{ShnidmanWeiss}*{Thm.\ 5.3}, $\avg_{d\in T}\#\Sel(\eta_d) >1$.  Let $\mathrm{min}_d = \min(\#\Sel(\phi_d),\#\Sel(\phi'_d))$ and $\mathrm{max}_d = \max(\#\Sel(\phi_d),\#\Sel(\phi'_d))$. By \Cref{lem:injection-of-selmer} and the above diagram, we have $\#\Sel(\eta_d)\le \#\Sel(\phi'_d)$ and $\#\Sel(\eta_d)\le \#\Sel(\psi'_d)$ for almost all $d\in T$. Therefore, by \Cref{cor:identify-maps}, we have $\#\Sel(\eta_d) \leq \mathrm{min}_d$ for almost all $d\in T$. Hence $\liminf\avg_{d\in T} \mathrm{min}_d >1$, where
    \[\liminf\avg_{d\in T}\mathrm{min}_d := \liminf_{X\to \infty}\dfrac{\sum_{d\in T, |d|\le X} \mathrm{min}_d}{\sum_{d\in T, |D|\le X}1}.\]
    
    On the other hand, we have
    \begin{align*}
        4 &= \avg_{d\in T}\#\Sel(\phi_d) +\avg_{d\in T}\#\Sel(\phi'_d) \\&= \avg_{d\in T}(\mathrm{min}_d + \mathrm{max}_d)
        \\ &\ge \liminf\avg_{d\in T}\mathrm{min}_d+\liminf\avg_{d\in T}\mathrm{max}_d
        \\&>1+ \liminf\avg_{d\in T}\mathrm{max}_d,
    \end{align*}
    so that $\liminf\avg_{d\in T}\mathrm{max}_d < 3.$ This immediately implies that for a positive lower density of twists $d\in T$, we have 
    \[\#\Sel(\phi_d) = \#\Sel(\phi'_d) = 1.\] 
    More quantitatively, let $s_0$ be the proportion of $d\in T$ such that $\mathrm{max}_d = 1$. Then
     \begin{equation}\label{eq:pos-prop}
     s_0 + 3(1-s_0) <3    
     \end{equation}
     and hence $s_0>0$. 
    By the Greenberg--Wiles formula and the definition of $T$, we have $\#\Sel(\widehat\phi_d) = \#\Sel(\widehat\phi'_{d}) = 1$ for almost all such $d$ as well. By (\ref{eq:bounds}), we have $\#\Sel_3(A_d) = 1$ for all such $d$, and hence $\rk A_d(F) = 0$ as well.
    \end{proof}

\begin{remark}\label{rem:explicit-pos-prop}
    Suppose that for some $m\in\Z_{\ge 0}$, $c(\eta_d) \ge 3^{-m}$ for all $d\in T$. Then, by \cite{ShnidmanWeiss}*{Thm.\ 5.2}, we have $\avg_{d\in T}\#\Sel(\eta_d) \ge 1 + 3^{-m}$, and $(\ref{eq:pos-prop})$ becomes
    \[s_0 + 3(1-s_0) \leq 3-3^{-m}.\]
    It follows that $s_0 \geq \frac{1}{2\cdot 3^m}$. 
\end{remark}

\section{Proof of Theorem $\ref{thm:pos-prop}$}\label{sec:proof-pos-prop}

Recall that $E\: y^2 + axy +by = x^3$ is an elliptic curve over $\Q$ with a point $(0,0)$ of order $3$.  Let $\theta\:E\to E' = E/\langle (0, 0)\rangle$ be the corresponding $3$-isogeny. Let $\zeta=\zeta_3$ be a primitive cube root of unity. In \Cref{def:A}, we defined an abelian variety $A/\Q$ with $\zeta$-multiplication, such that for each $d\in \Q\t/\Q^{\times 6}$, the rank of $A_d(\Q)$ is equal to $\rk E(K_d/\Q)\new$ (see \Cref{lem:new-rank-comparison}). To prove \Cref{thm:pos-prop}, we will show that $\rk A_d(\Q) = 0$ for a positive proportion of twists.

By \Cref{lem:completely-reducible}, we have $A[\pi] \simeq E'_{-3}[\widehat\theta_{-3}]\times E[\theta]$. We have $E[\theta] = \langle (0, 0) \rangle$, and by duality, we have $E[\theta]\simeq E'_{-3}[\widehat\theta_{-3}]$ as $G_F$-modules. Let $P$ and $Q$ denote the images in $A[\pi]$ of generators of $E[\theta]$ and $E'_{-3}[\widehat\theta_{-3}]$. Then $A[\pi](\Q) = \langle P, Q\rangle$, and we are in the setting of \Cref{prop:ab-surface-pos-prop}. Before proceeding, we need to verify that $A$ admits a prime-to-3 polarization satisfying the assumptions of \Cref{prop:ab-surface-pos-prop}.

\begin{lemma}\label{lem:jacobian-curve}
    Let $C$ be the hyperelliptic curve $y^2 = x^6 + \alpha x^3 + 1$, where $\alpha = 108b/a^3-2$. Then $A\simeq\Jac(C)$ and the $\zeta$-multiplication on $A$ is induced from the order $3$ automorphism $(x,y) \mapsto (\zeta x, y)$ of $C$. 
\end{lemma}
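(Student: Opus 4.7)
I would prove the lemma in two stages.

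The first stage is to verify that $\Jac(C)$ carries the $\zeta$-multiplication claimed. A basis of $H^0(C, \Omega^1)$ is $\{dx/y,\; x\,dx/y\}$, on which $\sigma\colon (x,y) \mapsto (\zeta x, y)$ acts with eigenvalues $\zeta$ and $\zeta^2$. Since neither eigenvalue is $1$, the induced endomorphism $\sigma \in \End_{\bar F}(\Jac(C))$ satisfies $\sigma^2 + \sigma + 1 = 0$, giving an embedding $\Z[\zeta]\hookrightarrow\End_{\bar F}(\Jac(C))$ sending $\zeta\mapsto\sigma$. Complex conjugation $\zeta\mapsto\zeta^{-1}$ is matched by the Galois action $\sigma\mapsto\sigma^{-1}$, so this embedding is $G_F$-equivariant and $\Jac(C)$ has $\zeta$-multiplication in the sense of \Cref{subsec:zetamult}.

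The second stage is to produce an explicit isogeny $f\colon B\to\Jac(C)$ whose kernel is $\tau(E_{-3}[\theta_{-3}])$. The key geometric input is the bielliptic involution $\iota\colon(x,y)\mapsto(1/x,\, y/x^3)$, which satisfies $\iota\sigma\iota^{-1}=\sigma^{-1}$ and hence corresponds to complex conjugation on the $\Z[\zeta]$-action. The quotient $C/\iota$ is an elliptic curve, and a direct computation using the $\iota$-invariants $s = x + 1/x$ and $W = y(x+1)/x^2$ gives the affine model $W^2 = (s+2)(s^3 - 3s + \alpha)$. A Weierstrass transformation, combined with the substitution $\alpha - 2 = 108b/a^3$, identifies $C/\iota$ with $E$ up to a $\Q$-rational isogeny factoring through $\theta$. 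Dualizing the quotient map $C\to C/\iota$ and composing appropriately yields a nonzero $F$-morphism $\varphi\colon E\to\Jac(C)$, which by \cite{MRS}*{Cor.\ 1.9} extends uniquely to a $\Z[\zeta]$-equivariant $F$-morphism $f\colon B = \Z[\zeta]\otimes_\Z E\to\Jac(C)$. Since $f$ is nonzero and both sides are abelian surfaces with compatible $\Z[\zeta]$-action (the tangent spaces being rank $1$ over $\Q(\zeta)$), $f$ is an isogeny.

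To finish, I pin down $\ker f$. It is a $\Z[\zeta]$-stable $G_F$-subgroup of $B$; a tangent-space comparison forces its order to be exactly $3$, so $\ker f\subset B[\pi]$. Using \Cref{lem:pitorsion} to identify $B[\pi]\simeq E_{-3}[3]$ as $G_F$-modules, and noting that for the elliptic curves $E$ considered here the canonical $G_F$-stable order-$3$ subgroup of $E_{-3}[3]$ is $E_{-3}[\theta_{-3}]$, I conclude $\ker f = \tau(E_{-3}[\theta_{-3}])$. This gives the isomorphism $A = B/\tau(E_{-3}[\theta_{-3}])\simeq\Jac(C)$ intertwining the $\Z[\zeta]$-actions, as claimed. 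The main obstacle is the Weierstrass identification of $C/\iota$ with an isogeny-image of $E$: this is a direct but delicate computation in which the precise relation $\alpha = 108b/a^3 - 2$ is essential for matching $j$-invariants (up to a $3$-isogeny that can be absorbed into the Serre tensor construction) and for isolating $\tau(E_{-3}[\theta_{-3}])$ among the possible candidate kernels.
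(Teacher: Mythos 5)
Your route is genuinely different from the paper's: you use a single bielliptic quotient $C\to C/\iota$ together with the universal property of the Serre tensor construction to produce a $\Z[\zeta]$-equivariant map $f\colon B\to\Jac(C)$, whereas the paper uses \emph{both} bielliptic quotients to define an explicit map $f_+^*-f_-^*\colon E^+\times E^-\to\Jac(C)$ (with $E^+=E$, $E^-=E'_{-3}$), computes its kernel directly as the graph of $E^+[2]\simeq E^-[2]$, and matches $(E^+\times E^-)/\Gamma$ against the description $B\simeq(E\times E_{-3})/\Delta$ from \Cref{ex:Bform=1}. Your first stage is correct (the action on $dx/y$ and $x\,dx/y$ has eigenvalues $\zeta,\zeta^2$), as is the quartic model $W^2=(s+2)(s^3-3s+\alpha)$ for $C/\iota$, and building in the $\Z[\zeta]$-equivariance from the start is a clean way to handle the second assertion of the lemma.

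The gap is in the identification of $\ker f$. Over a number field every isogeny is \'etale, so a tangent-space comparison cannot detect the order of the kernel: any isogeny of abelian surfaces induces an isomorphism on tangent spaces, and $\deg f$ could a priori be $1$, $3$, $9$, or anything else, depending on exactly which morphism $\varphi\colon E\to\Jac(C)$ you feed into the Serre tensor construction (your own hedge ``up to a $\Q$-rational isogeny factoring through $\theta$'' changes the degree by a factor of $3$). Pinning down the degree requires an actual computation, which is precisely what the paper's intersection argument $f_+^*E^+\cap f_-^*E^-=f_+^*E^+[2]\cap f_-^*E^-[2]$ supplies. Second, even granting $\#\ker f=3$, so that $\ker f$ is a $G_F$-stable, $\Z[\zeta]$-stable line in $B[\pi]\simeq E_{-3}[3]$, there is in general no ``canonical'' such line: $E_{-3}[3]$ is an extension of $E'_{-3}[\widehat\theta_{-3}]$ by $E_{-3}[\theta_{-3}]$ and may contain a second stable line (e.g.\ when the extension splits), in which case $B/\ker f$ need not be isomorphic to $A=B/\tau(E_{-3}[\theta_{-3}])$. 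Distinguishing the two requires tracking the map on $3$-torsion explicitly; the paper does this by noting that $\Jac(C)[1-\zeta]$ is generated by differences of the four points of $C$ with $xz=0$, which map to the $\theta$-torsion of $E^\pm$.
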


\begin{proof}
    Set  $E^+ = E$ and $E^- = E'_{-3}$.  It is convenient to use the following symmetric models 
    \[E^\pm \colon y^2 = x^3 + (3x + 2\pm\alpha)^2.\] 
    The double covers $f_\pm \colon C \to E^\pm$ given by
    \[(x :  y : z)\mapsto \br{\frac{(\alpha \pm 2)xz}{(x\mp z)^2}, \frac{(\alpha\pm2)y}{(x\mp z)^3}}\]
    may be used to define a morphism $f = f^*_+ - f^*_- \colon  E^+ \times E^- \to \Pic^0(C) \simeq \Jac(C)$, by pullback of divisors. Note that $C$ has an involution $\tau \colon (x : y : z) \mapsto (z: y : x)$ and the two double covers above are the quotients by $\tau$ and $\iota\tau$, where $\iota$ is the hyperelliptic involution.  The kernel of $f$ is therefore the intersection $f^*_+E^+ \cap f^*_-E^-$, consisting of divisors fixed by both of these involutions. Thus, the kernel is precisely $f^*_+E^+[2] \cap f^*_-E^-[2]$. Hence, $\Jac(C)\simeq (E^+\times E^-)/\Gamma$,
    where $\Gamma$ is the graph of the isomorphism $E^+[2]\simeq E^-[2]$.
    
    On the other hand, the abelian surface $B = \Z[\zeta] \otimes E$ is isomorphic to $(E \times E_{-3})/\Delta$, where $\Delta$ is the graph of $E[2] \simeq E_{-3}[2]$, by \Cref{ex:Bform=1}. This gives the claimed isomorphism
    \[A \simeq B/E_{-3}[\theta_{-3}] \simeq (E^+ \times E^-)/\Gamma \simeq \Jac(C).\]
    To see that the $\zeta$-actions match up, it is enough to show that $A[1- \zeta]$ maps to $\Jac(C)[1 -\zeta]$ (for the respective automorphisms $\zeta$ on each) under the above isomorphism.  By \Cref{lem:completely-reducible}, we have $A[1-\zeta] \simeq E^+[\theta^+] \times E^-[\theta^-]$.  This is also $\Jac(C)[1 - \zeta]$, since the divisors fixed by $\zeta$ are generated by the difference of the four points on $C$ where $xz = 0$, and these visibly map to $\theta$-torsion points of $E^\pm$, since these are the points where $x = 0$.
\end{proof}

\begin{remark}
The genus two curves $y^2 = x^6 + \alpha x^3 + 1$ were considered in \cite{BruinFlynnTesta}, where it is remarked that these are precisely the genus two curves with two independent 3-torsion divisors supported on at most 4 points.
\end{remark}

\begin{corollary}\label{cor:principally-polarized}
    $A$ is principally polarized and its polarization is preserved by $\zeta$.
\end{corollary}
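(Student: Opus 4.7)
The plan is to deduce the corollary immediately from \Cref{lem:jacobian-curve}, which identifies $A$ with $\Jac(C)$ and realizes the $\zeta$-multiplication on $A$ as the endomorphism induced by the order-$3$ automorphism $\sigma \colon (x,y) \mapsto (\zeta x, y)$ of the genus two curve $C \colon y^2 = x^6 + \alpha x^3 + 1$. The Jacobian of any smooth projective curve is canonically principally polarized via the theta divisor, so the first assertion is immediate from the lemma.

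For the second assertion, I would invoke the standard functoriality of the theta polarization: an automorphism $\sigma$ of $C$ induces an automorphism $\sigma^\ast$ of $\Jac(C)$ which is an isometry of the canonical principal polarization $\lambda$, in the sense that $\widehat{\sigma^\ast} \circ \lambda \circ \sigma^\ast = \lambda$. Equivalently, the Rosati involution $\alpha \mapsto \lambda^{-1}\widehat{\alpha}\lambda$ sends $\sigma^\ast$ to $(\sigma^\ast)^{-1}$. Applying this to our automorphism of order $3$, whose induced action on $A \simeq \Jac(C)$ is precisely $\zeta$, the Rosati involution sends $\zeta$ to $\zeta^{-1} = \overline{\zeta}$. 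This is exactly the statement that complex conjugation on $\Z[\zeta] \subset \End(A)$ agrees with the Rosati involution, which is the meaning of ``preserved by $\zeta$'' as required in the hypotheses of \Cref{prop:ab-surface-pos-prop}.

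I do not anticipate any serious obstacle: once \Cref{lem:jacobian-curve} is granted, both halves of the corollary reduce to familiar properties of Jacobians. The only genuine content is matching up the terminology — verifying that ``preserved by $\zeta$'' is equivalent to the Rosati-involution condition needed downstream — and this is a one-line rearrangement of the identity $\widehat{\sigma^\ast}\circ\lambda\circ\sigma^\ast = \lambda$.
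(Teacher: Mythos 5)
Your proposal is correct and follows exactly the route the paper intends: the paper gives no separate proof of this corollary, treating it as immediate from \Cref{lem:jacobian-curve} together with the standard facts that a Jacobian carries its canonical theta principal polarization and that automorphisms induced from the curve are isometries of it, so that Rosati sends $\zeta$ to $\zeta^{-1}=\overline{\zeta}$. Your identification of ``preserved by $\zeta$'' with the Rosati-involution condition is also precisely the reading the authors spell out in the sentence following the corollary.
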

To say that the principal polarization $\lambda \colon A \to \widehat A$ is preserved by $\zeta$ is equivalent to the condition that the corresponding Rosati involution on $\End(A)$ restricts to complex conjugation on $\Z[\zeta]$.  Thus, $A$ satisfies the conditions of \Cref{prop:ab-surface-pos-prop}.

Recall the commutative diagram
\[\begin{tikzcd}
                                                                               & \frac{A}{\langle P\rangle} \arrow[rd, "\phi'"]            &          \\
A \arrow[ru, "\phi"] \arrow[rd, "\psi"'] \arrow[r, "\eta" description] & \frac{A}{\langle P+Q\rangle} \arrow[r, "\eta'" description] & A_{-27} \\
                                                                               & \frac{A}{\langle Q\rangle} \arrow[ru, "\psi'"']             &         
\end{tikzcd}\]
from \Cref{prop:ab-surface-pos-prop}. Up to $2$-isogenies, this diagram becomes
\[\begin{tikzcd}
                                                                                                                                  & E'_{-3}\times E' \arrow[rd, "\widehat\theta_{-3}\times 1"] &                 \\
E_{-3}'\times E \arrow[ru, "1\times \theta"] \arrow[rd, "\widehat\theta_{-3}\times 1"'] \arrow[r, "\eta" description] & \frac{A}{\langle P+Q\rangle} \arrow[r, "\eta'" description]     & E_{-3}\times E' \\
                                                                                                                                  & E_{-3}\times E \arrow[ru, "1\times \theta"']           &                
\end{tikzcd}\]

Let $\f$ be the conductor of $E$ and let $\Sigma$ be the set of integers $d$ such that:
\begin{itemize}
    \item For all $p$, we have $v_p(d) \in\{ 0,1,3,5\}$.
    \item If $p\mid 3\f$, then $d\in \Qp^{\times 3}$.
\end{itemize}

In the next few lemmas, we compute the local Selmer ratios $c_p(\phi_d)$, $c_p(\phi'_d)$, $c_p(\psi_d)$, and $c_p(\psi'_d)$, for all $p$ and all $d \in \Sigma$.
\begin{lemma}\label{lem:good-places}
  If $d\in\Sigma$ and if $p\nmid 3\f d$, then $c_p(\phi_d) = c_p(\phi'_d) = c_p(\psi_d) = c_p(\psi'_d) = 1$
\end{lemma}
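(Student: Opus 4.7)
The plan is to apply Schaefer's formula \cite{schaefer}*{Lem.\ 3.8}, which for a $3$-isogeny $\alpha\colon X\to Y$ and a prime $p\neq 3$ gives $c_p(\alpha) = c_p(Y)/c_p(X)$, and then to verify that each of the four abelian surfaces $A_d$, $A_d/\langle P_d\rangle$, $A_d/\langle Q_d\rangle$, $A_d/\langle P_d+Q_d\rangle$ has trivial $3$-part in its local Tamagawa number at $p$. This immediately yields $c_p(\phi_d) = c_p(\phi'_d) = c_p(\psi_d) = c_p(\psi'_d) = 1$.

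First I would argue that $A$ itself has good reduction at every prime $p\nmid 3\f$. By \Cref{ex:Bform=1} and \Cref{def:A}, $A$ is $\Q$-isogenous to $E\times E_{-3}$, and because $\Q_p(\sqrt{-3}) = \Q_p(\zeta_3)$ is unramified at $p = 2$, both $E$ and $E_{-3}$ have good reduction at every $p\nmid 3\f$. By N\'eron--Ogg--Shafarevich, isogenous abelian varieties share reduction types, so $A$ and each of its three $3$-isogenous quotients $A/\langle P\rangle$, $A/\langle Q\rangle$, $A/\langle P+Q\rangle$ all have good reduction at every $p\nmid 3\f$.

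For primes $p\neq 2,3$ with $p\nmid\f d$, the extension $\Q_p(\sqrt[6]{d},\zeta_6)/\Q_p$ over which $A_d$ becomes isomorphic to $A$ is unramified (since $p$ is coprime to $6d$), so $A_d$ and each codomain twist inherit good reduction from $A$; their Tamagawa numbers at $p$ are $1$, giving $c_p = 1$ for all four isogenies.

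The main obstacle is $p = 2$, where the subextension $\Q_2(\sqrt{d})/\Q_2$ can be wildly ramified when $d\not\equiv 1\pmod 8$. Here I would exploit that every $2$-adic unit is a cube in $\Z_2^\times$: when $2\nmid d$ we have $\sqrt[3]{d}\in\Q_2$, so $A_d$ acquires good reduction over $L = \Q_2(\sqrt{d},\zeta_3)$, which satisfies $[L:\Q_2]\mid 4$. A theorem of Grothendieck (SGA 7, Expos\'e IX, \S 11.3) asserts that for an abelian variety with potentially good reduction over $L/K$, the component group $\Phi_K$ is annihilated by $[L:K]$; since $[L:\Q_2]$ is coprime to $3$, the $3$-part of $\Phi_2(A_d)$ vanishes. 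The same applies to each of the three codomains (being $3$-isogenous to $A_d$ in residue characteristic $2\neq 3$, they acquire good reduction over the same field $L$), giving $c_2 = 1$ for all four isogenies.
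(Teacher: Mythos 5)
Your proposal is correct and follows the same basic strategy as the paper: apply Schaefer's formula to reduce $c_p(\alpha_d)$ to a ratio of local Tamagawa numbers, then show those Tamagawa numbers contribute trivially. The paper's own proof is a one-liner: it asserts that $A_d$ and its three quotients have good reduction at every $p\nmid 3\f d$ and concludes immediately. For odd $p\nmid 3\f d$ this is exactly your unramified-twist argument. At $p=2$ (which genuinely occurs, e.g.\ for the conductor-$35$ curve of \Cref{subsec:example} with $d$ odd), the paper's assertion is not literally correct: writing $d=c^3$ with $c\in\Z_2^\times$, the sextic twist $A_d$ is over $\Q_2$ a quadratic twist of $A$ by $c$, and when $\Q_2(\sqrt{c})/\Q_2$ is ramified (i.e.\ $d\equiv 3,7\pmod 8$) the twist has additive, not good, reduction. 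Your fix --- potentially good reduction over an extension $L/\Q_2$ of degree dividing $4$, hence a component group whose $3$-primary part vanishes, combined with the fact that $c_p(\alpha)\in 3^{\Z}$ --- is the right way to close this, and it is a genuine improvement on the argument as written. Two small remarks: the precise exponent in the SGA~7 statement (whether $\Phi$ is killed by $[L:K]$ or by $[L:K]^2$, or by the ramification index) is immaterial here, since any power of $[L:K]$ is prime to $3$; and you should make explicit that you are using $c_p(\alpha_d)\in 3^{\Z}$ (recorded in the paper's discussion of Selmer ratios) to pass from ``the $3$-parts of the Tamagawa numbers are trivial'' to ``the ratio is $1$''.
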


\begin{proof}
    By \cite{schaefer}*{Lem.\ 3.8}, we have $c_p(\phi_d) = \frac{c_p(B_d)}{c_p(A_d)}$, where $c_p(B_d)$ and $c_p(A_d)$ are the local Tamagawa numbers. Since $A_d$ and $B_d$ have good reduction at $p$, this equals $1$. The remaining cases are identical.
\end{proof}

\begin{lemma}\label{lem:d-places}
  If $d\in\Sigma$ and if $p\mid d$, then $c_p(\phi_d) = c_p(\phi'_d) = c_p(\psi_d) = c_p(\psi'_d) =c_p(\eta_d)= 1$
\end{lemma}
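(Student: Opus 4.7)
The plan is to reduce all five Selmer ratios to a single local Galois cohomology computation by showing that the five kernels are $G_{\Q_p}$-isomorphic to the same module.

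First I would identify the common Galois module structure of the kernels. Since the $\zeta$-multiplication acts trivially on $A[\pi]=A[1-\zeta]$, the $\mu_6\subset\Aut(A)$-action on each of the rational order-$3$ subgroup schemes $\langle P\rangle$, $\langle Q\rangle$, $\langle P+Q\rangle$ factors through the quotient $\mu_6\to\mu_6/\mu_3\cong\{\pm1\}$, with $[-1]$ acting by inversion. Twisting by the cocycle $\sigma\mapsto\sqrt[6]d^{\sigma-1}$ therefore reduces to twisting by the quadratic character $\chi_d$ cut out by $\Q(\sqrt d)/\Q$, and the same analysis applies to the isogenies $\phi'_d,\psi'_d$ whose kernels sit inside $\zeta$-stable quotients of $A_d$ (these quotients inherit $\zeta$-multiplication because all three order-$3$ subgroups of $A[\pi]$ are $\zeta$-stable). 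Consequently, each of the five kernels is isomorphic as a $G_\Q$-module to $M:=\Z/3\Z\otimes\chi_d$.

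Next I would do the cohomology computation over $\Q_p$. Since $d\in\Sigma$ and $p\nmid 3\f$, we have $v_p(d)\in\{1,3,5\}$, so $\chi_d|_{G_{\Q_p}}$ is a nontrivial ramified quadratic character (the case $p=2$ requires a brief direct check on $\Q_2^\times/\Q_2^{\times2}$); hence $H^0(G_{\Q_p},M)=0$. The Cartier dual $M^\vee=\mu_3\otimes\chi_d^{-1}$ carries the Galois action $\overline\omega\cdot\chi_d$, where $\overline\omega$ is the mod-$3$ cyclotomic character; since $p\neq3$, the factor $\overline\omega$ is unramified at $p$, so the product $\overline\omega\cdot\chi_d$ is again a nontrivial ramified quadratic character and $H^0(G_{\Q_p},M^\vee)=0$. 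Combining Tate local duality with the local Euler characteristic formula (valid because $|M|$ is coprime to $p$) then gives
\[|H^1(G_{\Q_p},M)|=|H^0(G_{\Q_p},M)|\cdot|H^0(G_{\Q_p},M^\vee)|=1.\]

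Finally, the conclusion follows from the Kummer sequence: for each isogeny $\alpha\in\{\phi_d,\phi'_d,\psi_d,\psi'_d,\eta_d\}$ from some $X$ to some $Y$, the kernel of $\alpha_*\colon X(\Q_p)\to Y(\Q_p)$ is $H^0(G_{\Q_p},M)=0$ while its cokernel injects into $H^1(G_{\Q_p},M)=0$, giving $c_p(\alpha)=1$. I expect the main obstacle to lie in Step~1, specifically in verifying that the isogenies $\phi'_d,\psi'_d$, whose sources are quotients of $A_d$ rather than $A_d$ itself, really do have kernels of the form $\Z/3\Z\otimes\chi_d$; this should follow from $\zeta$-equivariance of the relevant quotient maps combined with the compatibility of sextic twisting with quotients by $\zeta$-stable subgroup schemes, but deserves a careful statement.
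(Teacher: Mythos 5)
Your proposal is correct and follows essentially the same route as the paper: identify each of the five kernels with the quadratic character $\chi_d$ cutting out $\Q_p(\sqrt d)$, note it is nontrivial (indeed ramified) at $p$ because $v_p(d)$ is odd, and conclude that both the kernel and the cokernel of the local map vanish. The only difference is that you establish $H^1(\Q_p,\chi_d)=0$ directly via local duality and the Euler characteristic formula, whereas the paper cites \cite{ShnidmanWeiss}*{Lem.\ 4.6} for the same fact.
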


\begin{proof}
    Let $\alpha\in\{\phi, \phi',\psi,\psi',\eta\}$. By assumption, $A[\alpha]$ is trivial is a $G_{\Q}$-module. Hence, $A_d[\alpha_d] \simeq\chi_d$, where $\chi_d\:G_\Q\to\F_3\t$ is the quadratic character cutting out $\Qp(\sqrt d)$, which is non-trivial, since $v_p(d)$ is odd. Hence, the image of the Kummer map lies in $H^1(\Qp, \chi_d)$, which by \cite{ShnidmanWeiss}*{Lem.\ 4.6}, is trivial. Similarly, since $d$ is not a square, $A_d[\alpha_d](\Qp) = 0$. Hence, $c_p(\alpha_d) = 1$.
\end{proof}

\begin{lemma}\label{lem:pdiv3f}
If $d \in \Sigma$ and $p\mid 3\f$, we have $c_p(\phi_d) = c_p(\psi'_{d}) = c_p(\theta_d)$ and $c_p(\psi_d) = c_p(\phi'_{d}) = c_p(\widehat\theta_{-3d})$. Here, $\theta_d$ is the $d$-th quadratic twist of $\theta\:E\to E'$.
\end{lemma}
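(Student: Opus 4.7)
The plan is to leverage the factorization diagram displayed just before the statement, which identifies $\phi, \phi', \psi, \psi'$ with the projection-style isogenies on a product of elliptic curves, up to $2$-isogenies defined over $\Q$. The crucial preliminary observation is that for $d\in\Sigma$ and $p\mid 3\f$, we have $d\in\Qp^{\times 3}$, so the image of $d$ in $H^1(\Qp,\mu_6)=\Qp^\times/\Qp^{\times 6}$ has order dividing $2$. In other words, locally at $p$ the $d$-th sextic twist of each variety-with-$\zeta_3$-multiplication in sight reduces to a quadratic twist. This matters because the product identification $A\to E'_{-3}\times E$ (an isogeny of $2$-power degree) need not respect the $\zeta_3$-multiplication, so one cannot twist it by an arbitrary element of $H^1(\Q,\mu_6)$; only the $\mu_2\subset \mu_6$ component of the twist is compatible with the product, acting as simultaneous inversion on both factors, and this is precisely the component that survives when $d$ is a local cube.

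Twisting the diagram above by $d$ over $\Qp$ therefore yields commutative squares of the form
\[\begin{tikzcd}
A_d \arrow[r, "\phi_d"] \arrow[d, "\mu_d"'] & (A/\langle P\rangle)_d \arrow[d, "\mu'_d"] \\
(E'_{-3})_d \times E_d \arrow[r, "1\times\theta_d"'] & (E'_{-3})_d \times E'_d
\end{tikzcd}\]
and analogously for $\psi_d, \phi'_d, \psi'_d$, with the vertical maps being the $d$-th quadratic twists of the original $2$-isogenies.  Using multiplicativity of local Selmer ratios under both composition and direct product, together with $c_p(1\times\theta_d) = c_p(\theta_d)$, this square yields
\[c_p(\phi_d)\cdot c_p(\mu'_d) \;=\; c_p(\mu_d)\cdot c_p(\theta_d), \qquad \text{so}\qquad \frac{c_p(\phi_d)}{c_p(\theta_d)} \;=\; \frac{c_p(\mu_d)}{c_p(\mu'_d)}.\]
The left-hand side lies in $3^{\Z}$ since $\phi_d$ and $\theta_d$ are $3$-isogenies, while the right-hand side lies in $2^{\Z}$ since $\mu_d,\mu'_d$ are $2$-isogenies. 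Because $2^\Z\cap 3^\Z = \{1\}$, both ratios equal $1$, proving $c_p(\phi_d) = c_p(\theta_d)$.

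Running the same argument on the three analogous twisted squares gives $c_p(\psi'_d) = c_p(\theta_d)$, $c_p(\psi_d) = c_p((\widehat\theta_{-3})_d)$, and $c_p(\phi'_d) = c_p((\widehat\theta_{-3})_d)$.  The identity $(\widehat\theta_{-3})_d = \widehat\theta_{-3d}$, which reflects that quadratic twists of an isogeny of elliptic curves compose by multiplying twist parameters in $\Q^\times/\Q^{\times 2}$, then completes the proof. The only subtle step is the compatibility of the product decomposition with the local twist; once one uses $d\in\Qp^{\times 3}$ to kill the incompatible $\mu_3$-component, the rest is routine bookkeeping with multiplicativity of Selmer ratios and the disjointness of the groups $2^\Z$ and $3^\Z$ in $\Q^\times$.
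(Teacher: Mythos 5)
Your proof is correct and follows essentially the same route as the paper's: since $d\in\Q_p^{\times 3}$, the sextic twist is locally a quadratic twist, so over $\Q_p$ the whole diagram becomes, up to $2$-isogenies, a diagram of products of elliptic curves with $\phi_d\approx 1\times\theta_d$, etc. The paper simply asserts that the $2$-isogenies do not affect the Selmer ratios of the $3$-isogenies; your explicit justification via multiplicativity of $c_p$ under composition and the disjointness $2^{\Z}\cap 3^{\Z}=\{1\}$ is a clean way to fill in that implicit step.
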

\begin{proof}
The condition that $d\in \mathbb Q_p^{\times 3}$ implies that over $\Q_p$, all the abelian varieties in the above diagram are isogenous to products of elliptic curves. For example, up to $2$-isogenies, we have $A_d \approx E_d\times E'_{-3d}$ and $\phi_d\approx 1\times \theta_d$, and so $c_p(\phi_d) = c_p(\theta_d)$. The other equalities follow by an identical analysis.
\end{proof}

\begin{corollary}\label{cor:all-equal}
    For all $d\in \Sigma$ and for all $p$, we have $c_p(\phi_d) = c_p(\psi'_d)$ and $c_p(\phi'_d) = c_p(\psi_d)$.
\end{corollary}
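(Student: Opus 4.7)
The plan is a straightforward case analysis on the prime $p$, assembling the three preceding lemmas. Fix $d \in \Sigma$. Every rational prime falls into at least one of three (not mutually exclusive) cases: (i) $p \nmid 3\f d$, (ii) $p \mid d$, or (iii) $p \mid 3\f$. It therefore suffices to verify the two equalities $c_p(\phi_d) = c_p(\psi'_d)$ and $c_p(\phi'_d) = c_p(\psi_d)$ in each case.

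In case (i), I would invoke \Cref{lem:good-places}, which tells us that all four local Selmer ratios are equal to $1$, so both equalities are trivial. In case (ii), \Cref{lem:d-places} likewise forces $c_p(\phi_d) = c_p(\phi'_d) = c_p(\psi_d) = c_p(\psi'_d) = 1$ (the argument there, which I would not redo, relies on the fact that $v_p(d)$ is odd for $p \mid d$, so the relevant kernel is a non-trivial quadratic character and the Kummer image vanishes). In case (iii), the condition $d \in \Sigma$ forces $d$ to be a cube in $\Q_p$, so \Cref{lem:pdiv3f} applies and gives directly $c_p(\phi_d) = c_p(\psi'_d) = c_p(\theta_d)$ and $c_p(\psi_d) = c_p(\phi'_d) = c_p(\widehat{\theta}_{-3d})$. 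Either equality claimed by the corollary is read off from this.

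There is no real obstacle: the substantive local work has already been done in Lemmas \ref{lem:good-places}--\ref{lem:pdiv3f}, and the corollary is a bookkeeping consequence. The only point worth checking is that the three cases overlap consistently — for instance, when $p \mid 3\f$ and $p \mid d$, both \Cref{lem:d-places} and \Cref{lem:pdiv3f} apply, and their conclusions are compatible (both yield the common value $1$, since the twist $\theta_d$ over $\Q_p$ then degenerates as well). Since every prime is covered and the two equalities hold on each piece, the corollary follows.
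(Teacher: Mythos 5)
Your proof is correct and is essentially the paper's own argument: the paper's proof of \Cref{cor:all-equal} simply states that it follows from the three preceding lemmas, and your case analysis (with \Cref{lem:good-places} for $p\nmid 3\f d$, \Cref{lem:d-places} for $p\mid d$, and \Cref{lem:pdiv3f} for $p\mid 3\f$) is exactly the intended bookkeeping. The remark about overlapping cases is harmless but unnecessary, since each applicable lemma already yields the claimed equalities on its own.
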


\begin{proof}
    This follows from the previous three lemmas. 
\end{proof}

\begin{lemma}\label{lem:3-reduction}
    We have $c_3(\phi_d) = 1$ and $c_3(\phi'_d) = 3$ for all $d\in \Sigma$.
\end{lemma}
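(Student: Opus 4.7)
The plan is to apply \Cref{lem:pdiv3f} to reduce both Selmer ratios at $3$ to those of quadratic twists of the rational $3$-isogeny $\theta\colon E\to E'$, and then to compute them by exploiting the étale nature of $\ker\theta$ at $3$. Concretely, \Cref{lem:pdiv3f} gives $c_3(\phi_d) = c_3(\theta_d)$ and $c_3(\phi'_d) = c_3(\widehat\theta_{-3d})$, so it suffices to prove $c_3(\theta_d) = 1$ and $c_3(\widehat\theta_{-3d}) = 3$ for every $d\in\Sigma$.

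First I would verify that $E$ has good ordinary reduction at $3$: since $3\nmid ab$, the discriminant $\Delta = b^3(a^3 - 27b) \equiv (ab)^3 \pmod{3}$ is a unit at $3$, and the rational $3$-torsion point $(0,0)$ forces ordinary reduction, because a supersingular curve in characteristic $3$ has no nontrivial $3$-torsion over $\overline{\F_3}$. Hence $\ker\theta = \langle (0,0)\rangle$ extends to the constant étale group scheme $\underline{\F_3}$ over $\Z_3$, and by Cartier duality $\ker\widehat\theta$ extends to $\mu_3$. Since $\theta$ has étale kernel it induces an isomorphism on formal groups, so $|E'(\Q_3)/\theta E(\Q_3)| = |\widetilde{E'}(\F_3)/\widetilde\theta\,\widetilde E(\F_3)| = 3$ (the last equality because $\widetilde\theta$ has rational $\F_3$-kernel of order $3$ and $|\widetilde E(\F_3)| = |\widetilde{E'}(\F_3)|$). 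Together with $|E[\theta](\Q_3)| = 3$, this gives $c_3(\theta) = 1$, and the product formula $c_3(\theta)\,c_3(\widehat\theta) = c_3([3]_E) = 3$ (valid because $E(\Q_3) \simeq \Z_3 \oplus E(\Q_3)_{\mathrm{tors}}$) then yields $c_3(\widehat\theta) = 3$.

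To handle arbitrary $d\in\Sigma$, I would use the fact that $d \in \Q_3^{\times 3}$ forces $v_3(d) \in \{0,3\}$ and makes the class of $d$ in $\Q_3^\times/\Q_3^{\times 6}$ depend only on its image $e$ in $\Q_3^\times/\Q_3^{\times 2}$, one of the four classes $\{1,-1,3,-3\}$; moreover $-3d$ also sweeps out these four classes. Since $c_3(\theta_e)\,c_3(\widehat\theta_e) = c_3([3]_{E_e}) = 3$ for every square class $e$, the entire problem reduces to showing $c_3(\theta_e) = 1$ for each $e \in \{1,-1,3,-3\}$. For the unramified classes $e = \pm 1$, $E_e$ still has good ordinary reduction at $3$, $\ker\theta_e$ is étale over $\Z_3$, and the argument of the previous paragraph applies verbatim. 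For the ramified classes $e = \pm 3$, I would descend to $L = \Q_3(\sqrt{e})$, over which $E_e \cong E$ again has good ordinary reduction and the direct computation gives $c_L(\theta) = 1$; the factorization $c_L(\theta) = c_{\Q_3}(\theta)\,c_{\Q_3}(\theta_e)$ arising from the decomposition $E(L)\otimes\Q \cong \bigl(E(\Q_3) \oplus E_e(\Q_3)\bigr)\otimes\Q$ then isolates $c_{\Q_3}(\theta_e) = 1$.

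The hard part will be making this descent rigorous in the ramified setting, since the decomposition of $E(L)$ into $E(\Q_3) \oplus E_e(\Q_3)$ holds only up to isogeny and the local Selmer ratio at $3$ is sensitive to corrections from the formal group and the component group of the Néron model of $E_e$. If the descent becomes unwieldy, an alternative is a direct local computation: one determines that $E_e$ has Kodaira type $I_0^*$ at $3$ (a ramified quadratic twist of good reduction), applies the Euler characteristic formula to obtain $|H^1(\Q_3, \chi_e)| = 3$, and identifies the image of the Kummer map as the trivial subgroup by examining the connected-étale filtration of the Néron model of $E_e[\theta_e]$ over $\Z_3$.
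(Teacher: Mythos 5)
Your skeleton coincides with the paper's up to its final step: both arguments reduce via \Cref{lem:pdiv3f} to the quadratic-twist ratios $c_3(\theta_d)$ and $c_3(\widehat\theta_{-3d})$, and both use that $3\nmid ab$ gives good ordinary reduction at $3$ with $\ker\theta$ \'etale (hence $\ker\widehat\theta$ of multiplicative type). At that point the paper simply cites \cite{bkls}*{Thm.\ 10.5}, which computes these ratios for \emph{all} quadratic twists at once, whereas you try to reprove that input by hand. Your computation is complete and correct for the square classes $e=\pm1$ of $d$ in $\Q_3^\times/\Q_3^{\times2}$, and the product formula $c_3(\theta_e)c_3(\widehat\theta_e)=3$ correctly delivers the dual ratio. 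The gap is the ramified classes $e=\pm3$, which genuinely occur for $d\in\Sigma$ (e.g.\ $d=27u$ with $u\in\Z_3^{\times3}$): there you only sketch two strategies, flag the first as ``the hard part,'' and carry out neither.

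In fact your first strategy works, and more easily than you fear. With $L=\Q_3(\sqrt e)$, the map $E(\Q_3)\oplus E_e(\Q_3)\to E(L)$ has kernel and cokernel killed by $2$ (write $2P=(P+P^\sigma)+(P-P^\sigma)$), while cokernels of $3$-isogenies are killed by $3$; so $E'(L)/\theta E(L)$ decomposes \emph{exactly} as $E'(\Q_3)/\theta E(\Q_3)\oplus E'_e(\Q_3)/\theta_e E_e(\Q_3)$, and $E[\theta](L)$ is exactly the sum of its two eigenspaces. No N\'eron-model or formal-group ``corrections'' are lost: $c_{\Q_3}(\theta_e)$ is precisely what the identity $c_L(\theta)=c_{\Q_3}(\theta)\,c_{\Q_3}(\theta_e)$ isolates. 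And $c_L(\theta)=1$ by your unramified argument verbatim, since $E/L$ still has good reduction, $\ker\theta$ is \'etale over $\O_L$, and the residue field is still $\F_3$, so kernel and cokernel both have order $3$. By contrast, your fallback contains an error: $|H^1(\Q_3,\chi_e\otimes\Z/3\Z)|=3$ holds only for $e$ in the class of $3$; for $e$ in the class of $-3$ the module $\chi_e\otimes\mu_3$ is trivial, so $H^2\neq0$ and $|H^1|=9$, and in either case knowing $|H^1|$ does not by itself identify the image of the Kummer map. So either complete the descent as above, or fall back on the citation as the paper does.
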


\begin{proof}
    Since $d$ is a cube in $\Q_3$, we have $c_3(\phi_d) = c_3(\theta_d)$ and $c_3(\phi'_d) = c_3(\widehat\theta_{-3d})$. By \cite{schaefer}*{Lem.\ 3.8}, we have $c(\theta_d) = c(E'_d)/c(E_d)\gamma$, where $\gamma\ii$ is the normalized absolute value of the determinant of the map $\mathrm{Lie}(\mathcal{E}) \to \mathrm{Lie}(\mathcal{E'})$ on tangent spaces of the N\'eron models over $\Z_p$.
    
    Since $3\nmid ab$, $E$ has good reduction over $\Q_3$.  The $3$-torsion point $(0, 0)$ on $E$ reduces to a non-trivial point in $E(\F_3)$, so the reduction is ordinary. It follows from \cite{bkls}*{Thm.\ 10.5} that $c(E'_d)/c(E_d) = 1$ and $\gamma = 1$. We therefore have $c_3(\phi_d) = 1$. For $c_3(\phi_d')$ the argument is similar, except the generator of $\ker(\phi'_d) \simeq \ker(\widehat\theta_{-3d})$ reduces to the identity over $\F_3$ (since over $\overline \F_3$, the kernel is the Cartier dual of $\Z/3\Z$, which is $\mu_3$) and so $\gamma^{-1} = 3$ and $c_3(\phi'_d) = 3$.
\end{proof}

\begin{lemma}\label{lem:infinity}
    We have $c_\infty(\phi_d) = c_\infty(\phi'_d) = c_\infty(\eta_d) = \begin{cases}\frac13& d>0\\ 1&d<0.\end{cases}$
\end{lemma}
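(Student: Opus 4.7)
The plan is to compute $c_\infty(\alpha)$ directly for each $\alpha\colon X\to Y$ in $\{\phi_d,\phi'_d,\eta_d\}$ using the fact that $\ker\alpha$ is a one-dimensional $\mathbb F_3$-module with explicitly computable $G_{\mathbb R}$-action. From the long exact sequence
\[0 \to X[\alpha](\mathbb R) \to X(\mathbb R) \to Y(\mathbb R) \to H^1(\mathbb R, X[\alpha]) \to H^1(\mathbb R, X),\]
the final arrow is zero because its source is $3$-torsion while its target is $2$-torsion (as $G_{\mathbb R}\simeq\mathbb Z/2$). Hence the cokernel of $X(\mathbb R)\to Y(\mathbb R)$ is all of $H^1(\mathbb R, X[\alpha])$, giving
\[c_\infty(\alpha) = \frac{|H^1(\mathbb R, X[\alpha])|}{|X[\alpha](\mathbb R)|}.\]

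Next I identify $X[\alpha]$ as a $G_{\mathbb R}$-module. Because $\zeta = 1-\pi$ acts trivially on $A[\pi]$, the $\mu_6$-action on any line $L\subset A[\pi]$ factors through $\mu_6/\mu_3\simeq\mu_2$, with $-1$ acting as $-1$; the same holds for $\langle Q\rangle\subset(A/\langle P\rangle)[1-\zeta]$. The cocycle $\sigma\mapsto\sqrt[6]{d}^{\sigma-1}$ defining the twist $A_d$ thus acts on $\ker\alpha_d$ via its image in $\mu_6/\mu_3$, which by Kummer theory is the quadratic character $\chi_d$ cut out by $\mathbb Q(\sqrt d)$. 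Consequently $X[\alpha]\simeq \mathbb F_3(\chi_d)$ as a $G_{\mathbb Q}$-module, hence as a $G_{\mathbb R}$-module.

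Since $|\mathbb F_3|$ is coprime to $|G_{\mathbb R}|$, we have $H^1(\mathbb R,\mathbb F_3(\chi_d)) = 0$ regardless of $\chi_d$ (for the non-trivial character this is the Tate cohomology computation with norm $N = 1+(-1) = 0$ on $\mathbb F_3$), so the formula reduces to $c_\infty(\alpha) = 1/|X[\alpha](\mathbb R)|$. When $d>0$ the character $\chi_d$ is trivial over $\mathbb R$ and $X[\alpha](\mathbb R)=\mathbb F_3$, giving $c_\infty(\alpha)=\tfrac13$; when $d<0$ it is the sign character, $X[\alpha](\mathbb R)=0$, and $c_\infty(\alpha)=1$. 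The step to execute most carefully is the identification of the twisted kernel with $\mathbb F_3(\chi_d)$: I would verify cleanly that the mod-$\mu_3$ reduction of the cocycle $\sqrt[6]{d}^{\sigma-1}$ really produces the quadratic character $\chi_d$ without a spurious cyclotomic factor, and that the $\mu_6$-stable line $\langle Q\rangle$ in the quotient $A/\langle P\rangle$ transports correctly under twisting.
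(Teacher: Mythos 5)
Your proof is correct and follows essentially the same route as the paper: the numerator is $1$ because $H^1(\Gal(\C/\R), X[\alpha])$ vanishes (odd-order module, order-2 group), and the denominator $\#X[\alpha](\R)$ is $3$ or $1$ according to whether $d>0$ or $d<0$, via the identification of the twisted kernel with $\F_3(\chi_d)$ that the paper also uses (cf.\ its Lemma~\ref{lem:d-places}). Your version merely spells out the long exact sequence and the $\mu_6/\mu_3$ reduction of the twisting cocycle in more detail.
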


\begin{proof}
    Write $B =\frac{A}{\langle P\rangle}$. The numerator of $c_\infty(\phi_d)$ is equal to $\#\im(B_d(\R)\to H^1(\Gal(\C/\R), A_d[\phi_d]))$. Since $\#A_d[\phi_d] = 3$ is odd, we have $H^1(\Gal(\C/\R), A_d[\phi_d]) = 0$, so this numerator is 1. The denominator is $\#A_d[\phi_d](\R)$ which is $3$ if and only if $d$ is a square in $\R$, i.e.\ if $d>0$. The arguments for $\phi_d'$ and $\eta_d$ are identical.
\end{proof}

\begin{lemma}\label{lem:bad-reduction}
    Let $d\in\Sigma$. If $p\nmid 3d$ divides $\f$, then $c_p(\phi_d)$ and $c_p(\phi'_d)$ are as in the following table:
    \begin{table}[h]

\begin{tabular}{|c|c||c|c||c|c|}
	\hline
	\multicolumn{2}{|c||}{}&\multicolumn{2}{|c||}{$p\mid a^3-27b$}&\multicolumn{2}{|c|}{$p\mid b$}\\
	\cline{3-6}
\multicolumn{2}{|c||}{}&$c_p(\phi_d)$&$c_p(\phi'_d)$&$c_p(\phi_d)$&$c_p(\phi'_d)$\\
\hline
$p=2$&\begin{tabular}{c}
     $d\in \Zp^{\times2}$  \\
     $-3d\in \Zp^{\times 2}$ \\
     $d, -3d\notin\Zp^{\times 2}$
\end{tabular}&\begin{tabular}{l}
	$1$\\
	$3$\\
	$1$
\end{tabular} &\begin{tabular}{l}
    $\frac13$\\
	$1$\\
	$1$
\end{tabular}&\begin{tabular}{l}
    $\frac13$\\
    $1$\\
	$1$
\end{tabular} &\begin{tabular}{l}
	$1$\\
	$3$\\
	$1$
\end{tabular}\\
\hline
$p\equiv 1\pmod 3$&\begin{tabular}{l}
     $d\in \Zp^{\times2}$  \\
     $d\notin \Zp^{\times 2}$ 
\end{tabular}&\begin{tabular}{l}
	$3$\\
	$1$
\end{tabular} &\begin{tabular}{l}
	$\frac13$\\
	$1$
\end{tabular}&\begin{tabular}{l}
	$\frac13$\\
	$1$
\end{tabular} &\begin{tabular}{l}
	$3$\\
	$1$
\end{tabular}\\
\hline
$p\equiv 2\pmod 3$&\begin{tabular}{l}
     $d\in \Zp^{\times2}$  \\
     $d\notin \Zp^{\times 2}$ 
\end{tabular}&\begin{tabular}{l}
	$1$\\
	$3$
\end{tabular} &\begin{tabular}{l}
	$\frac13$\\
	$1$
\end{tabular}&\begin{tabular}{l}
	$\frac13$\\
	$1$
\end{tabular} &\begin{tabular}{l}
	$1$\\
	$3$
\end{tabular}\\
\hline
\end{tabular}
\end{table}
\end{lemma}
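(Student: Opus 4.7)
The strategy is to reduce every entry in the table to $3$-parts of local Tamagawa ratios for quadratic twists of the $3$-isogeny $\theta\colon E \to E'$. First, since $d \in \Sigma$ forces $d \in \Qp^{\times 3}$ at every $p \mid 3\f$, \Cref{lem:pdiv3f} yields $c_p(\phi_d) = c_p(\theta_d)$ and $c_p(\phi'_d) = c_p(\widehat{\theta}_{-3d})$. The identity $\widehat\theta \circ \theta = [3]_E$, together with $c_p([3]) = 1$ for $p \nmid 3\infty$, gives $c_p(\widehat\theta_{d'}) = c_p(\theta_{d'})^{-1}$, so the whole table is controlled by $c_p(\theta_d)$ and $c_p(\theta_{-3d})^{-1}$. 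Since $d$ is already a cube at $p$, each of these depends only on the class of $d$ (respectively $-3d$) in $\Zp^\times / \Zp^{\times 2}$.

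Next, Schaefer's formula gives $c_p(\theta_{d'}) = c_p(E'_{d'})/c_p(E_{d'})$ for $p \nmid 3\infty$, reducing the problem to a Tamagawa-number computation. At $p \mid b$, since $\gcd(a,b) = 1$, $E$ reduces to the nodal cubic $y(y+ax) = x^3$ with tangents defined over $\F_p$, giving split multiplicative reduction of type $I_n$ with $n = 3v_p(b)$. The rational $3$-torsion point $(0,0)$ reduces to the node, hence lies outside the identity component and, having order $3$, generates the $3$-part of the component group $\Z/n\Z$. Therefore $E' = E/\langle (0,0)\rangle$ has type $I_{n/3}$, and the untwisted ratio is $c_p(\theta) = (n/3)/n = 1/3$. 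At $p \mid a^3 - 27b$, a direct computation locates the singular point of the reduction at $(-a^2/9,\, a^3/27)$ and shows that the discriminant of the tangent cone there is $-a^2/3$ modulo $p$; thus $E$ has split (resp.\ non-split) multiplicative reduction iff $p \equiv 1$ (resp.\ $2$) $\pmod 3$. In the split case the dual picture (the kernel of $\widehat\theta$ is $\mu_3$, which lies in the identity component) gives $E'$ of type $I_{3n}$ and $c_p(\theta) = 3$; in the non-split case both $c_p(E), c_p(E')$ lie in $\{1,2\}$, so $c_p(\theta) = 1$ after taking $3$-parts.

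Finally I would fold in the quadratic twist. Twisting $I_n$-type multiplicative reduction by a non-square unit swaps split and non-split while preserving $n$. Hence a non-square twist of a split $E$ makes both $c_p(E_{d'})$ and $c_p(E'_{d'})$ lie in $\{1,2\}$, trivialising the $3$-part and giving $c_p(\theta_{d'}) = 1$; conversely, a non-square twist of the non-split case (at $p \equiv 2 \pmod 3$, $p \mid a^3 - 27b$) splits the curve and produces a non-trivial $3$-part. Combined with the values of $c_p(\theta)$ computed above, and using that $-3 \in \Zp^{\times 2}$ iff $p \equiv 1 \pmod 3$ for odd $p$, this yields the four rows $p \equiv 1, 2 \pmod 3$ of the table, applied once with $d' = d$ and once with $d' = -3d$. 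At $p = 2$, $|\Z_2^\times / \Z_2^{\times 2}| = 4$ and $-3 \equiv 5 \pmod 8$ is a non-square, so the joint class of $(d,-3d)$ modulo squares takes exactly the three values listed (the pair ``both squares'' being empty), and the same twisting recipe gives the three sub-rows.

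The principal subtlety lies in verifying the two geometric inputs: that $(0,0)$ sits outside the identity component at $p \mid b$ (immediate, since it reduces to the node) and that the tangent-cone discriminant at $p \mid a^3-27b$ is indeed $-a^2/3$, which pins down the split/non-split character driving the entire analysis. At $p = 2$ one must additionally check that the given model is sufficiently close to minimal for the quadratic-twist recipe for Tamagawa numbers to apply as stated, though the final casework is the same.
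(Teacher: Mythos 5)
Your argument is correct and follows essentially the same route as the paper: reduce via \Cref{lem:pdiv3f} and duality to the quadratic twists $\theta_d$ and $\theta_{-3d}$ of the $3$-isogeny, determine split versus non-split multiplicative reduction at $p\mid b$ and $p\mid a^3-27b$ (including the $-c_4/c_6$ analysis at $p=2$), and then fold in the square class of $-3$. The only cosmetic difference is that you compute the ratio in the split case from the position of $\ker\theta$ in the component group of the N\'eron model, whereas the paper reads it off from the explicit $j$-invariants of $E$ and $E'$; these agree since $c_p = -v_p(j)$ for split type $I_n$.
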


\begin{proof}
    As before, since $d\in \Sigma$, we have $c_p(\phi_d) = c_p(\theta_d)$ and $c_p(\phi'_d) = c_p(\widehat\theta_{-3d})$. The assumption that $(a, b) = 1$ ensures that $E$ is semistable (as we will see momentarily), and hence has multiplicative reduction (since $p \mid \f$, by assumption). We need to determine whether $E$ has split or non-split multiplicative reduction.
    
    First suppose that $p\ne 2$. We have $E\: y^2 + axy + by = x^3$. If $p\mid b$, then modulo $p$, $E$ has equation
    \[\br{Y -\frac{ax}{2}}\br{Y + \frac{ax}2}= x^3,\]
    where $Y = y + \frac{ax}2$, i.e.\ $E$ has split multiplicative reduction. Similarly, if $p\mid a^3 - 27b$, then modulo $p$, $E$ has equation
    \[Y^2 + \frac{a^2}{12}X^2 = X^3\]
    with $Y = y + \frac12(ax + b)$ and $X = x + \frac{a^2}{9}$. Thus $E$ has split multiplicative reduction if and only if $-3$ is a square in $\mathbb Q_p$, i.e.\ if $p \equiv 1 \pmod{3}$. 
 
    When $p = 2$, we compute that $\frac{-c_4}{c_6} \equiv 1\pmod 8$ when $2\mid b$, and $\frac{-c_4}{c_6} \equiv -3\pmod 8$ when $2\mid a^3-27b$. By Hensel's lemma, $\frac{-c_4}{c_6}$ is a square in $\Q_2$ if and only if it is a square in $\Z/8\Z$. From this, one checks that  if $2\mid b$, then $E$ has split multiplicative reduction, while if $2\mid a^3-27b$, then $E_{-3}$ has split multiplicative reduction.
 
    We see that $E_d$ and $E'_d$ have non-split multiplicative reduction whenever $p\mid b$ and $d\notin \Q_p^{\times 2}$ or $p\mid a^3-27b$ and $-3d\notin \Qp^{\times 2}$. In such cases, $c(\phi_d) = c(\theta_d) = 1$ by \cite[Prop.\ 10.4]{bkls}.
    
    Otherwise, the formula in \emph{loc.\ cit.} gives $c_p(\theta_d) = \frac{v_p(j(E'_d))}{v_p(j(E_d))} =  \frac{v_p(j(E'))}{v_p(j(E))}$. We have 
    \[j(E) = \frac{a^3(a^3-24b)^3}{b^3(a^3-27b)} \hspace{2mm} \mbox{ and } \hspace{2mm} j(E') = \frac{a^3(a^3 + 216b)^3}{b(a^3-27b)^3}.\] 
    Since $p\ne 3$ and $(a, b) = 1$, $p$ can only divide the denominator of these $j$-invariants, and the remaining entries for $c_p(\phi_d)$ are easily computed. To compute the entries for $c_p(\phi'_d)$, we use the fact that
    \[c_p(\phi'_d) = c_p(\widehat{\theta}_{-3d}) = c_p(\theta_{-3d})\ii = c_p(\phi_{-3d})\ii,\]
    so that the values of $c_p(\phi'_d)$ can be computed using the values of $c_p(\phi_{-3d})$.
    \end{proof}

\begin{proof}[Proof of Theorem $\ref{thm:pos-prop}$]
    Let $A$ be the abelian surface defined in \Cref{def:A}, with $m = 1$. Let $\Sigma$ be the set of integers defined above. For $100\%$ of $d\in \Sigma$, $d$ has order $6$ in $\Q(\zeta_3)\t/\Q(\zeta_3)^{\times 6}$. Hence, by \Cref{lem:new-rank-comparison}, we have $\rk E(K_d/\Q)\new = \rk A_d(\Q)$ for such $d$. Thus, it is sufficient to show that $\rk A_d(\Q) = 0$ for a positive proportion of $d$ in $\Sigma$.
    
    By \Cref{cor:principally-polarized} and \Cref{lem:completely-reducible}, we are in the setting of \Cref{prop:ab-surface-pos-prop}. Hence, it is sufficient to show that the set $T$ in the statement of \Cref{prop:ab-surface-pos-prop} has positive density. 
    
    Define a subset $T'\sub \Sigma$ as follows, based on the two possible cases considered in the theorem:
    \begin{enumerate}
        \item Suppose that there exists a prime $q\equiv 2\pmod 3$ with $q\mid a^3-27b$. Then $d\in T'$ if and only if:
        \begin{itemize}
            \item if $p\mid \f$ and $p\ne q$, then 
            \begin{itemize}
                \item if $p = 2$, then either $2\mid d$ or $d, -3d\notin \Z_2^{\times 2}$.
                \item if $p\equiv 1\pmod 3$, then $p\mid d$ or $d\notin\Zp^{\times 2}$.
                \item if $p\equiv 2\pmod 3$, then $p\mid d$.
            \end{itemize}
            \item either $d<0$ and $d\in \Z_q^{\times 2}$, or $d>0$ and $-3d\in \Z_q^{\times 2}$.
        \end{itemize}
        \item Suppose that there exist primes $q_1 \equiv 1\pmod3$ and $q_2\equiv 2\pmod 3$ such that $q_1\mid a^3-27b$ and $q_2\mid b$. Then $d\in T'$ if and only if:
        \begin{itemize}
            \item if $p\mid \f$ and $p\notin \{q_1, q_2\}$, then 
            \begin{itemize}
                \item if $p = 2$, then either $2\mid d$ or $d, -3d\notin \Z_2^{\times 2}$.
                \item if $p\equiv 1\pmod 3$, then $p\mid d$ or $d\notin\Zp^{\times 2}$.
                \item if $p\equiv 2\pmod 3$, then $p\mid d$.
            \end{itemize}
            \item $d\in \Z_{q_1}^{\times 2}$.
            \item either $d<0$ and $d\in \Z_{q_2}^{\times 2}$, or $d>0$ and $-3d\in \Z_{q_2}^{\times 2}$.
        \end{itemize}
        
    \end{enumerate}
    
    By Lemmas \ref{lem:good-places}-\ref{lem:d-places} and \ref{lem:3-reduction}-\ref{lem:bad-reduction}, we have $c(\phi_d) = c(\phi_d') = 1$, for all $d \in T'$.  By \Cref{cor:all-equal}, we see that $c(\psi_d) = c(\psi'_d)=1$ as well. Hence, $T'\sub T$. Since $T'$ has positive density, the result follows from \Cref{prop:ab-surface-pos-prop}.
\end{proof}

\subsection{An explicit example}\label{subsec:example}

Consider the elliptic curve $E\: y^2 + 2xy - y = x^3$ of conductor $35$. Then $E$ satisfies hypothesis $(i)$ of \Cref{thm:pos-prop}. We compute a lower bound on the proportion of $d$ such that $\rk E(K_d/\Q)\new = 0$. In this case, we can assume for simplicity that $d$ is squarefree.\footnote{For elliptic curves with many odd primes $p \equiv 2\pmod 3$ of bad reduction, we must consider sets $T'$ which are not contained in the set of squarefree integers, as in the proof of \Cref{thm:pos-prop}. In this example, we can restrict to just squarefree integers, as there is only one such prime.}

\begin{proposition}\label{prop:explicit example}
    Let $T$ be the set of squarefree integers $d$ such that all of the following hold:
    \begin{itemize}
    \item $d\equiv \pm1,\pm8,\pm10\pmod{27}$;
    \item $d\equiv-1, 0\pmod7$;
    \item if $d< 0$, then $d\equiv \pm 1\pmod 5$; and
    \item if $d>0,$ then $d\equiv \pm 2\pmod 5$
    \end{itemize}
    Then for a proportion of at least $1/18$ elements $d\in T$, we have $\rk E(K_d/\Q)\new = 0$.
\end{proposition}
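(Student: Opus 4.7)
The plan is to apply \Cref{prop:ab-surface-pos-prop} together with the quantitative \Cref{rem:explicit-pos-prop} using $m = 2$, which will give lower density at least $\tfrac{1}{2\cdot 3^2} = \tfrac{1}{18}$. For the curve $E\: y^2 + 2xy - y = x^3$ we have $a = 2$, $b = -1$, $\f = 35$, and $a^3 - 27b = 5\cdot 7$ with $5\equiv 2\pmod 3$ and $7\equiv 1\pmod 3$, so \Cref{thm:pos-prop}(i) applies with $q = 5$. Two things need to be verified: first, that every $d\in T$ satisfies $c(\phi_d) = c(\phi_d') = c(\psi_d) = c(\psi_d') = 1$, so that $T$ sits inside the set of the same name in \Cref{prop:ab-surface-pos-prop}; and second, the uniform bound $c(\eta_d)\ge \tfrac{1}{9}$ for all $d\in T$.

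For the first point, \Cref{cor:all-equal} reduces the task to checking $c(\phi_d) = c(\phi_d') = 1$. The congruence $d\equiv \pm 1,\pm 8,\pm 10\pmod{27}$ is exactly the condition that $d$ is a cube in $\Q_3^\times$, so the lemmas of \Cref{sec:proof-pos-prop} apply at $p = 3$. Primes $p\notin\{3, 5, 7\}$ contribute trivially by \Cref{lem:good-places} or \Cref{lem:d-places} (depending on whether $p\mid d$), as does $p = 2$; the argument of \Cref{lem:d-places} uses only that $d$ is a non-square at $p$, so it extends to $p\in\{5,7\}$ with $p\mid d$. I would then do a short case analysis over the four scenarios given by the sign of $d$ crossed with $d\equiv -1\pmod 7$ versus $7\mid d$, combining \Cref{lem:infinity}, \Cref{lem:3-reduction}, and (where applicable) \Cref{lem:bad-reduction}, to confirm that $c(\phi_d) = c(\phi_d') = 1$ in each case. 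The set $T$ has positive density, being cut out by finitely many sign and congruence conditions on squarefree integers.

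For the second point, only primes in $\{3, 5, 7, \infty\}$ contribute nontrivially to $c(\eta_d)$. \Cref{lem:infinity} gives $c_\infty(\eta_d)\in \{\tfrac{1}{3}, 1\}$. At $p = 3$ the argument of \Cref{lem:3-reduction} carries over to the kernel $\langle P+Q\rangle$: since $P$ reduces to a nontrivial point of $E(\F_3)$ while $Q$ reduces to the identity, $P + Q$ reduces to a nontrivial point and $c_3(\eta_d) = 1$. When $p\in\{5,7\}$ and $p\mid d$, the extension of \Cref{lem:d-places} mentioned above gives $c_p(\eta_d) = 1$. The remaining task is to compute $c_5(\eta_d)$ and $c_7(\eta_d)$ when $p\nmid d$, which I would do using the Jacobian presentation $A\simeq \Jac(C)$ of \Cref{lem:jacobian-curve} and a Tate-style analysis of N\'eron models for $A_d$ and its $\eta$-quotient over $\Q_p$; the target is to show that each factor lies in $\{\tfrac{1}{3}, 1, 3\}$ and that $c_5(\eta_d)\cdot c_7(\eta_d)\ge \tfrac{1}{3}$ for every $d\in T$. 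Combined with $c_\infty(\eta_d)\ge \tfrac{1}{3}$ and $c_3(\eta_d) = 1$, this will give $c(\eta_d)\ge \tfrac{1}{9}$, and \Cref{rem:explicit-pos-prop} with $m = 2$ will then yield the claimed proportion.

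The hardest part will be this final local calculation of $c_p(\eta_d)$ at $p = 5, 7$ when $p\nmid d$. Unlike $\phi$ and $\psi$, whose kernels are ``coordinate'' subgroups of $A[\pi]\simeq E[\theta]\times E'_{-3}[\widehat\theta_{-3}]$ and hence descend to elliptic $3$-isogenies between $E_d$ and $E'_{-3d}$ after splitting $A_d$ over $\Q_p$ as in \Cref{lem:pdiv3f}, the isogeny $\eta$ has diagonal kernel $\langle P + Q\rangle$, so its local Selmer ratio is not simply a ratio of elliptic Tamagawa numbers. The calculation must therefore be carried out directly for the abelian surface, which should be tractable given that $\End(A)\supset \Z[\zeta_3]$ and the primes $5, 7$ are distinct from $3$.
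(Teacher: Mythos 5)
Your overall strategy matches the paper's: show $T$ lands in the set of \Cref{prop:ab-surface-pos-prop}, prove $c(\eta_d)\ge 3^{-2}$ for all $d\in T$, and invoke \Cref{rem:explicit-pos-prop} with $m=2$ to get the proportion $\tfrac1{18}$. The first half of your plan (reducing to $c(\phi_d)=c(\phi'_d)=1$ via \Cref{cor:all-equal} and checking the local conditions prime by prime) is essentially what the paper does, which simply observes that $T$ sits inside the set $T'$ already constructed in the proof of \Cref{thm:pos-prop}(i) with $q=5$.

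The gap is in the second half. You defer the computation of $c_5(\eta_d)$ and $c_7(\eta_d)$ for $p\nmid d$ to an unexecuted ``Tate-style analysis of N\'eron models'' of $A_d$ and its quotient by the diagonal subgroup $\langle P+Q\rangle$, and you assert $c_3(\eta_d)=1$ by analogy with \Cref{lem:3-reduction} --- but that lemma's proof goes through the splitting of $\phi_d$ into elliptic $3$-isogenies, which is unavailable for the diagonal kernel, so neither claim is established. Moreover your bookkeeping actually \emph{needs} the exact value $c_3(\eta_d)=1$: with only the soft bounds $c_3\ge\tfrac13$, $c_\infty\ge\tfrac13$, $c_5c_7\ge\tfrac13$ you would land at $3^{-3}$ and hence $m=3$, giving $\tfrac1{54}$ rather than $\tfrac1{18}$. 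The paper avoids all of this with the elementary inequality
\[
c_p(\eta_d)=\frac{\#\coker(A_d(\Q_p)\to (A/\langle P+Q\rangle)_d(\Q_p))}{\#\ker \eta_d(\Q_p)}\ \ge\ \frac{1}{\#A_d[\eta_d](\Q_p)},
\]
together with the observation that $A[\eta]$ has trivial Galois action, so $A_d[\eta_d]$ is its quadratic twist by $\Q(\sqrt d)$ and $\#A_d[\eta_d](\Q_p)=3$ precisely when $d\in\Q_p^{\times2}$. The congruence and sign conditions defining $T$ are engineered exactly for this bound: $d$ is never a square in $\Q_7$ (either $7\,\|\,d$, or $d\equiv-1$ is a nonresidue), so $c_7(\eta_d)\ge1$; and the sign condition ties $5$ to $\infty$ so that exactly one of $c_5(\eta_d)$, $c_\infty(\eta_d)$ can be as small as $\tfrac13$ (for $d<0$ one has $c_\infty=1$ and $d$ a $5$-adic square, for $d>0$ one has $c_\infty=\tfrac13$ and $d$ a $5$-adic nonsquare), whence $c_5(\eta_d)c_\infty(\eta_d)\ge\tfrac13$. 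Combined with $c_3(\eta_d)\ge\tfrac13$ and $c_p(\eta_d)=1$ elsewhere (\Cref{lem:d-places} and the good-reduction argument), this gives $c(\eta_d)\ge\tfrac19$ with no N\'eron-model input. You should replace your proposed local computations at $3$, $5$, $7$ with this counting of rational $\eta_d$-torsion; as written, the decisive step of your argument is missing.
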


\begin{proof}
    Observe that $T$ is the set of squarefree integers $d$ such that:
    \begin{itemize}
        \item $d\in \Z_3^{\times 3}$;
        \item either $7\mid d$ or $d\in \Z_7^{\times 3}\setminus \Z_7^{\times 6}$;
        \item if $d<0$, then $d\in \Z_5^{\times 6}$; and
        \item if $d>0$ then $d\in \Z_5^{\times 3}\setminus \Z_5^{\times 6}$
    \end{itemize}

    The set $T$ is contained in the set $T'$ defined in part $(i)$ of the proof of \Cref{thm:pos-prop}. Hence, $T$ is contained in the set in the statement of \Cref{prop:ab-surface-pos-prop}.
    
    Let $\eta, \phi, \phi'$ be the isogenies from the above commutative diagram. We compute the Selmer ratio $c(\eta_d)$ for $d\in T$. By \Cref{lem:d-places}, we have $c_p(\eta_d) = 1$ for all $d\in T$, unless $p\in\{3,5,7,\infty\}$. Moreover,
    \[c_p(\eta_d) = \frac{\#\coker\eta_d(\Q_p)}{\#\ker\eta_d(\Q_p)}\ge \frac1{\#A_d[\eta_d](\Qp)}.\]
    Now $\#A_d[\eta_d](\Qp) = 1$ if and only if $d\notin \Qp^{\times 2}$. If $d\in T$, then $d\notin \Q_7^{\times 2}$. Thus $c_7(\eta_d) \ge 1$. Otherwise, we have $c_3(\eta_d) \ge \frac 13$ and $c_\infty(\eta_d)c_5(\eta_d)\ge \frac13$. Thus, for all $d\in T$, we have $c(\eta_d) \ge \frac19$.
    
    By \cite{ShnidmanWeiss}*{Thm.\ 5.2}, we have $\avg_{d\in T}\#\Sel(\eta_d) \ge 1 + \frac 19$. On the other hand, as in the proof of \Cref{prop:ab-surface-pos-prop} (see also \Cref{rem:explicit-pos-prop}), we have
    \[1 + \frac19\le \avg_{d\in T}\#\Sel(\eta_d)\le \avg_{d\in T}\mathrm{min}_d = 4-\avg_{d\in T}\mathrm{max}_d,\]
    where $\min_d = \min(\#\Sel(\phi_d), \#\Sel(\phi'_d))$ and $\max_d = \max(\#\Sel(\phi_d), \#\Sel(\phi'_d))$. Hence 
    \[\avg_{d\in T}\mathrm{max}_d \le 3-\frac19.\]
    Let $s_0$ be the proportion of $d\in T$ with $\max_d = 1$. Then
    \[s_0 + 3(1-s_0) \le \avg_{d\in T}\mathrm{max}_d \le 3-\frac19\]
    from which it follows that $s_0\ge \frac{1}{18}$.
    
    We see that for a set of $d\in T$ of relative density $\frac 1{18}$, we have $\max_d = 1$, i.e.\ $\#\Sel(\phi_d) = \#\Sel(\phi_d') = 1$. As in the proof of \Cref{prop:ab-surface-pos-prop}, we see that $\rk E(K_d/\Q)\new = 0$ for such $d$.
\end{proof}

\section{Application to Hilbert's 10th problem over pure sextic fields}\label{sec:hilbert}
Before giving the proof of \Cref{thm:hilbert}, we recall from \cite{garcia-fritz-pasten} some facts related to diophantine sets over a ring $R$ (i.e.\ sets characterized as solutions to polynomial equations over $R$) and Hilbert's tenth problem over number fields.

\begin{definition}
    We say that a subset $S\sub R^n$ is \emph{diophantine} over $R$ if there exist integers $k, m$ and polynomials $F_1, \ldots, F_k\in R[x_1, \ldots, x_n, y_1, \ldots, y_m]$ that satisfy the following property: $(a_1, \ldots a_n) \in S$ if and only if there exists an element $(b_1, \ldots, b_m)\in A^m$ such that for every $j = 1, \ldots, k$, we have $F_j(a_1, \ldots, a_n, b_1, \ldots, b_m) = 0$.
\end{definition}

\begin{definition}
An extension $K/F$ is {\it integrally diophantine} if $\O_F$ is diophantine in $\O_K$.  
\end{definition}

It is well-known that if $K/\Q$ is integrally diophantine, then Hilbert's tenth problem has a negative solution over $\O_K$, so we aim to show that many pure sextic fields are integrally diophantine. Such sextic fields contain subfields, and we will use:

\begin{lemma}{\cite{garcia-fritz-pasten}*{Lem.\ 3.1}}\label{lem:transitivity}
The property of being integrally diophantine is transitive in towers of number fields.
\end{lemma}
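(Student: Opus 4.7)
The plan is to compose the two diophantine characterizations provided by the hypotheses. Suppose $F \subset K \subset L$ is a tower of number fields with both $K/F$ and $L/K$ integrally diophantine. By definition there exist polynomials $P_1, \ldots, P_k \in \O_K[x, y_1, \ldots, y_m]$ such that, for $a \in \O_K$, one has $a \in \O_F$ if and only if there is a tuple $(b_1, \ldots, b_m) \in \O_K^m$ with $P_j(a, b_1, \ldots, b_m) = 0$ for all $j$, and analogously polynomials $Q_1, \ldots, Q_\ell \in \O_L[x, z_1, \ldots, z_p]$ witnessing that $\O_K$ is diophantine in $\O_L$.

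Given $a \in \O_L$, I would introduce auxiliary variables $b_1, \ldots, b_m$, a $p$-tuple $c = (c_1, \ldots, c_p)$, and $m$ further $p$-tuples $c^{(1)}, \ldots, c^{(m)}$, all ranging over $\O_L$. The system to impose is
\[Q_j(a, c) = 0, \qquad Q_j(b_i, c^{(i)}) = 0 \text{ for each } 1 \le i \le m, \qquad P_j(a, b_1, \ldots, b_m) = 0,\]
for every admissible index $j$. Because $\O_K \subset \O_L$, the $P_j$'s may be viewed inside $\O_L[x, y_1, \ldots, y_m]$ without change, so this is a bona fide system of polynomial equations over the ambient ring $\O_L$ in the variables $a$, the $b_i$, and the coordinates of $c$ and the $c^{(i)}$.

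The verification that this system cuts out $\O_F$ as a subset of $\O_L$ should then be routine. In one direction, any $\O_L$-solution forces $a \in \O_K$ and each $b_i \in \O_K$ via the $Q_j$ equations, after which the $P_j$ equations witness that $a \in \O_F$. Conversely, if $a \in \O_F$ then in particular $a \in \O_K$, so a certifying $c \in \O_L^p$ exists, the $K$-over-$F$ characterization supplies $b_i \in \O_K$, and for each $b_i$ a further tuple $c^{(i)} \in \O_L^p$ is available by the $L$-over-$K$ characterization.

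There is no substantive obstacle here; the one observation is that coefficients from $\O_K$ are automatically coefficients in $\O_L$, so no change of ring is needed when reinterpreting the $P_j$'s over the larger ring. The whole argument is an instance of the standard closure of diophantine definability under existential quantification, relativized so that one may introduce variables constrained to lie in a diophantine subring.
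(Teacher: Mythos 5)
Your argument is correct and is exactly the standard composition-of-diophantine-definitions proof; the paper itself does not prove this lemma but simply cites \cite{garcia-fritz-pasten}*{Lem.\ 3.1}, whose proof proceeds in the same way. The one point worth making explicit --- that the auxiliary witnesses $b_i$ must themselves be constrained to lie in $\O_K$ by additional copies of the $Q_j$ system before the $P_j$ equations can be invoked --- is handled correctly in your write-up.
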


We will also use the following result of Shlapentokh \cite{Shlapentokh}:

\begin{theorem}\label{thm:shlapentokh}
Let $K/F$ be a finite extension of number fields.  If there exists an elliptic curve $E/F$ such that $\rk E(F) = \rk E(K) > 0$, then $K/F$ is integrally diophantine. 
\end{theorem}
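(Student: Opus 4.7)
The plan is to exhibit a diophantine definition of $\O_F$ inside $\O_K$, which is precisely what is required for $K/F$ to be integrally diophantine. Fix a Weierstrass model of $E$ with coefficients in $\O_F\subseteq\O_K$, so that the affine points of $E(K)$ are cut out by a polynomial equation in two variables and hence form a diophantine subset of $\O_K^2$; moreover, the group law on $E$ is given by polynomials over $\O_F$.

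First, I would extract the algebraic consequence of rank preservation. Since $E(K)$ and $E(F)$ are finitely generated abelian groups of equal rank, the quotient $E(K)/E(F)$ is finite; let $N\ge 1$ be its exponent, so that $[N]E(K)\subseteq E(F)$. The image $S := [N]E(K)$ is diophantine in $E(K)$ via the existential definition ``$R\in S$ if and only if there exists $Q\in E(K)$ with $[N]Q = R$'', and by construction $S\subseteq E(F)$. Projecting to the $x$-coordinate then produces a diophantine set $X\subseteq \O_K$ whose members all lie in $F$.

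The harder step, and where I expect the main obstacle, is bootstrapping from the sparse set $X\subseteq \O_F$ to a diophantine definition of the full ring $\O_F$. Here I would fix a point $P_0\in E(F)$ of infinite order, which exists by the hypothesis $\rk E(F)>0$, and exploit the elliptic divisibility sequence formed by the denominators of $x([n]P_0)$. Its strong divisibility properties allow one to diophantinely model $(\Z,+,\cdot)$ inside $\O_K$, after which norm-equation arguments in the style of Denef, Lipshitz, Pheidas, and Shlapentokh can be used to single out those elements of $\O_K$ that lie in the subring $\O_F$. Combining this integer model with the $F$-valued coordinates produced above, one assembles a finite list of polynomial conditions whose $\O_K$-solutions are exactly $\O_F$. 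The truly delicate work lies in the archimedean and $\mathfrak{p}$-adic height bounds needed to prevent extra solutions coming from intermediate rings between $\O_F$ and $\O_K$; these bounds are the core of Shlapentokh's original proof.
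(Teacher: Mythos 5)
The statement you are asked to prove is not proved in the paper at all: it is Shlapentokh's theorem, quoted verbatim and used as a black box (the paper cites \cite{Shlapentokh} and immediately applies it). So the only question is whether your argument is itself a proof, and it is not. Your first reduction is fine: $E(F)\subseteq E(K)$ are finitely generated of equal rank, so $E(K)/E(F)$ is finite, $[N]E(K)\subseteq E(F)$ for $N$ the exponent, and the set of $N$-th multiples is existentially definable from the Weierstrass equation and the group law. But the entire content of the theorem lies in the second half, and there you only name ingredients (``elliptic divisibility sequences,'' ``norm-equation arguments in the style of Denef, Lipshitz, Pheidas, and Shlapentokh'') without constructing the polynomial conditions or proving the divisibility, congruence, and height estimates that make them work; you even say explicitly that these bounds ``are the core of Shlapentokh's original proof.'' Deferring precisely the step that makes the statement a theorem is a genuine gap, not a proof.

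Two more concrete points. First, ``diophantinely model $(\Z,+,\cdot)$ inside $\O_K$'' is not by itself what is needed, and having a model of $\Z$ does not automatically single out the subring $\O_F$ of $\O_K$; Shlapentokh's argument instead runs through a weak vertical method: one uses the point $P_0\in E(F)$ of infinite order (together with $[N]E(K)\subseteq E(F)$, so that all relevant multiples have coordinates in $F$) to manufacture pairs of elements of $F$ satisfying congruences modulo elements of very large norm, and then shows that an element of $\O_K$ congruent to an element of $F$ modulo such an ideal, with controlled denominators, must already lie in $F$; integrality at the remaining primes is imposed separately. Norm-equation (Pell-type) arguments are the older technique for totally real and related extensions and are not the mechanism here. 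Second, a small but symptomatic slip: the $x$-coordinates of points of $E(K)$ lie in $K$, not in $\O_K$, so ``projecting to the $x$-coordinate'' does not directly give a diophantine subset of $\O_K$; one must encode $x=u/v$ with $u,v\in\O_K$ and carry the denominators through the definition, which is exactly where the divisibility estimates you omitted are used.
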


As well as a recent result of Smith \cite{Smith-thesis}:
\begin{theorem}\label{thm:smith}
Let $E/\Q$ be an elliptic curve with $E[2](\Q) \not\simeq \Z/2\Z$. Then for $100\%$ of integers $d$, we have $\rk E_d = (-1)^{\dim_{\F_2}\Sel_2(E_d)}$, where $E_d$ is the $d$-th quadratic twist of $E$. In particular, for $100\%$ of integers $d$, if $E_d$ has even $2$-Selmer rank, then $\rk E(\Q(\sqrt{d})) = \rk E(\Q)$. 
\end{theorem}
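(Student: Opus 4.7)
The plan is to show that for $100\%$ of integers $d$, the $2$-Selmer rank is as small as the parity of the root number $w(E_d)$ allows and that the $2$-primary part of the Tate--Shafarevich group $\mathrm{III}(E_d)$ vanishes, so that the Mordell--Weil rank equals $\dim_{\F_2}\Sel_2(E_d)$ and lies in $\{0,1\}$. Granting these two distributional claims, the descent exact sequence $E_d(\Q)/2E_d(\Q) \hookrightarrow \Sel_2(E_d) \twoheadrightarrow \mathrm{III}(E_d)[2]$ together with the unconditional parity theorem of Dokchitser--Dokchitser (giving $(-1)^{\dim \Sel_2(E_d)} = w(E_d)$) immediately delivers the conclusion, with the ``in particular'' clause following because $w(E_d) = +1$ forces $\dim \Sel_2(E_d) = 0$ and hence $E_d(\Q)$ torsion, i.e.\ no rank gain in $\Q(\sqrt{d})$.

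Under the hypothesis $E[2](\Q) \not\simeq \Z/2\Z$, the classical $2$-descent is explicit: when $E[2](\Q) = (\Z/2\Z)^2$, the Selmer group $\Sel_2(E_d)$ is the kernel of an $\F_2$-matrix $M_1(d)$ whose entries are Legendre symbols built from the primes dividing $d$ and the bad primes of $E$; the no-$2$-torsion case is treated analogously after passing to the resolvent field $\Q(E[2])$. To control $\Sel_{2^k}(E_d)$ inductively, the obstruction to lifting from $\Sel_{2^{k-1}}$ to $\Sel_{2^k}$ is governed by a Cassels--Tate-style pairing whose Gram matrix $M_k(d)$ has entries that are generalised R\'edei symbols in $d$ and the bad primes. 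Thus $k \mapsto \dim_{\F_2}\Sel_{2^k}(E_d)$ is a nonincreasing sequence dictated by a tower of $\F_2$-matrices.

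The heart of the argument is to show that, as $d$ varies over squarefree integers with $|d|\le X$, this tower of matrices equidistributes in $\F_2$, uniformly in $k$ and with enough independence across levels, so that $k \mapsto \dim_{\F_2}\Sel_{2^k}(E_d)$ behaves like a Markov chain whose stationary distribution matches the Bhargava--Kane--Lenstra--Poonen--Rains heuristic for $\mathrm{III}[2^\infty]$. The main obstacle, and Smith's central innovation, is establishing this equidistribution: one must control higher-order correlations between R\'edei symbols when $d$ has many prime factors, via a delicate combination of the large sieve, Chebotarev density with uniform error terms, and new cancellation estimates for products of R\'edei symbols attached to increasingly elaborate compositum fields. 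Once this distributional input is secured, the limiting distribution of $\mathrm{III}(E_d)[2^\infty]$ concentrates on the trivial group for $100\%$ of $d$ of either parity of $w(E_d)$, so $\rk E_d = \dim_{\F_2}\Sel_2(E_d) \in \{0,1\}$ as required.
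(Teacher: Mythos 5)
The statement you were asked to prove is not proved in the paper at all: it is quoted as an external result and attributed to Smith's thesis (\cite{Smith-thesis}), so the ``paper's proof'' is simply a citation. Your proposal, by contrast, tries to sketch Smith's actual argument, and while the outline is directionally faithful to his strategy (higher $2^k$-descent governed by matrices of R\'edei-type symbols, equidistribution of those matrices, convergence to the Bhargava--Kane--Lenstra--Poonen--Rains distribution), it has a genuine gap: the entire content of the theorem is the distributional input that you explicitly defer (``Smith's central innovation \dots once this distributional input is secured''). Controlling the joint distribution of the lifting obstructions when $d$ has many prime factors --- the cancellation estimates for products of R\'edei symbols, the uniformity in the number of prime divisors, and the passage from level-$k$ equidistribution to the Markov-chain description of $\dim_{\F_2}\Sel_{2^k}(E_d)$ --- is a long and delicate argument, not a routine combination of the large sieve and Chebotarev. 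Asserting it does not constitute a proof; what you have written is a summary of the theorem's proof architecture, which is essentially equivalent to citing Smith, as the paper does.

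There are also two local inaccuracies worth flagging. First, in your opening paragraph you claim that $w(E_d)=+1$ ``forces $\dim\Sel_2(E_d)=0$''; the parity theorem (over $\Q$ for $2$-descent this is Monsky's theorem, not Dokchitser--Dokchitser) only pins down the parity of the $2$-Selmer rank, and when $E[2](\Q)\simeq(\Z/2\Z)^2$ one always has $\dim_{\F_2}\Sel_2(E_d)\ge 2$, so the correct target is that the quotient of $\Sel_2(E_d)$ by the image of the rational $2$-torsion is trivial for $100\%$ of even-parity twists --- and this again is exactly the hard distributional statement, not a consequence of parity. Second, the hypothesis $E[2](\Q)\not\simeq\Z/2\Z$ is not merely a convenience for making the descent matrices explicit: in the $\Z/2\Z$ case the relevant Galois-theoretic independence fails and Smith's method does not apply, so your remark that the no-torsion and full-torsion cases are ``treated analogously after passing to $\Q(E[2])$'' glosses over where the hypothesis actually enters.
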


\begin{proof}[Proof of \Cref{thm:hilbert}]
Let $E = E_{a,b}$ be an elliptic curve over $\Q$ satisfying the conditions of \Cref{thm:pos-prop} and having positive rank.   We also insist that $E[2](\Q) \not\simeq \Z/2\Z$. For example, we may take $E = E_{4,-5}\colon y^2 +4xy - 5y = x^3$.  \Cref{thm:pos-prop} gives that for a set of positive lower density $\Sigma \subset \Z$ of sixth-power-free integers $d$, the new rank of $E/K_d$ is 0, where $K_d = \Q(\sqrt[6]{d})$. 

\begin{lemma}
For all $d \in \Sigma$, the $2$-Selmer group  $\Sel_2(E_d)$ has even $\F_2$-dimension.
\end{lemma}
\begin{proof}
Let $\theta_d \colon E_d \to E'_d$ be the $3$-isogeny. By the construction in the proof of \Cref{thm:pos-prop}, the Selmer ratio $c(\theta_d)= \prod_p c_p(\theta_d) = \prod_p c_p(\phi_d) = c(\phi_d)$ is equal to $1$ for all $d \in \Sigma$. Indeed, if $p\mid 3\f$, where $\f$ is the conductor of $E$, then $c_p(\theta_d) = c_p(\phi_d) $ by \Cref{lem:pdiv3f}. And if $p\nmid 3\f$, then $c_p(\theta_d) = c_p(\phi_d)=1$ by the same arguments as Lemmas \ref{lem:good-places} and \ref{lem:d-places}. The formula \[\log_3  c(\theta_d) \equiv \dim_{\F_3}\Sel_3(E_d) - \dim_{\F_3}E_d[3](\Q) \pmod{2}\] (see \cite[Prop.\ 49(b)]{elk}) shows that $\dim_{\F_3} \Sel_3(E_d)$ is even for all but finitely many $d \in \Sigma$. By the $3$- and $2$-parity theorems (due to Dokchitser--Dokchitser \cite{DokchisterDokchitserparity} and Monsky \cite{Monsky}, respectively), it follows that $\dim_{\F_2} \Sel_2(E_d)$ is also even. 
\end{proof}
Hence by \Cref{thm:smith}, the rank of $E_d$ is $0$ for $100\%$ of $d \in \Sigma$.  
Let $F = \Q(\sqrt[3]{d})$.  Since $\rk E(\Q(\sqrt{d})) = \rk E(\Q)$ and $\rk E(K_d/\Q)\new = 0$, we see that $0 < \rk E(F) = \rk E(K_d)$, and hence $K_d/F$ is integrally diophantine by \Cref{thm:shlapentokh}. Since every cubic field (or more generally, any number field with at most one complex place) is known to be integrally diophantine, it follows from \Cref{lem:transitivity} that $K_d/\Q$ is integrally diophantine as well. Hence Hilbert's tenth problem has a negative solution over $\O_{K_d}$.      
\end{proof}
\section{Ranks of QM abelian surfaces}\label{sec:QM}

We recall some facts from \cite{ShnidmanWeiss}*{\S 10} and then give a proof of \Cref{thm:qm-pryms}. 

Let $a> b$ be distinct positive integers, and let $f(x) = (x-a^2)(x-b^2)$. Then $y^3 = f(x^2)$ is an affine model of a smooth projective plane quartic curve $C$ that admits a double cover $\pi \colon C \to E$ to the elliptic curve $E \colon y^3 = f(x)$.  Let $A$ be the Prym variety, i.e.\ the kernel of the map $J=\Jac(C)\to E$ induced by Albanese functoriality. The $\zeta_3$-multiplication on $J$ induces $\zeta_3$-multiplication on $A$, so we may speak of the sextic twists $A_d$. In fact, $A_d$ is simply the Prym variety of $C_d \colon y^3 = (x^2 - da^2)(x^2-db^2)$, which covers the elliptic curve $E_d \colon y^3 = (x-da^2)(x-db^2)$. 

Recall that $\pi$ is the descent of $1 - \zeta_3$ to $\Q$, or in other words, a descent of $\sqrt{-3}$.  Note that $A[\pi] \simeq (\Z/3\Z)^2$ is spanned by the rational points $P=(a, 0) - (-a,0)$ and $Q=(b, 0) - (-b, 0)$.

The Prym variety $A$ need not be principally polarized over $\Q$, but it admits a polarization $\lambda \colon A \to \widehat A$ whose kernel is order 4 \cite{MumfordPrym}. By \cite{ShnidmanWeiss}*{Lem.\ 10.4}, the Rosati involution restricts to complex conjugation on the subring $\Z[\zeta_3]\sub \End(A)$. Thus, we are in the setting of \Cref{prop:ab-surface-pos-prop}. 

The endomorphism $[3]\:A\to A$ factors as $[3] = \pi_{-27}\circ\pi$. Hence, for each $d\in \Q\t/\Q^{\times 6}$, 
\[\rk(A_d) \le \dim_{\F_3}\Sel_3(A_d) \le \dim_{\F_3}(\Sel(\pi_d) \+ \Sel(\pi_{-27d})) = \dim_{\F_3}(\Sel(\pi_d)\+\Sel(\widehat{\pi}_d)),\]
where the last equality follows from \Cref{lem:equal-selmer}.

As in \Cref{sec:pos-prop}, for each $d\in \Q\t/\Q^{\times 6}$, the isogeny $\pi_d$ factors as
\[\begin{tikzcd}
                                                                               & \left(\frac{A}{\langle P\rangle}\right)_d \arrow[rd, "\phi'_{d}"]            &          \\
A_d \arrow[ru, "\phi_d"] \arrow[rd, "\psi_d"'] \arrow[r, "\eta_d" description] & \left(\frac{A}{\langle P+Q\rangle}\right)_d \arrow[r, "\eta'_d" description] & A_{-27d} \\
                                                                               & \left(\frac{A}{\langle Q\rangle}\right)_d \arrow[ru, "\psi'_d"']             &         
\end{tikzcd}\]

Let $\f$ be the conductor of $A$, and let $\Sigma$ be the set of squarefree $d\in \Q\t/\Q^{\times 6}$ such that $d,-3d\notin \Q_p^{\times 2}$ for all $p\mid 3\f$. To prove \Cref{thm:qm-pryms}, we first compute the four Tamagawa ratios $c(\phi_d), c(\phi'_d), c(\psi_d), c(\psi'_d)$.

\begin{lemma}\label{lem:tam-ratios}
    Let $\alpha\in \{\phi, \phi', \psi, \psi'\}$. If $d\in\Sigma$ and if $p\nmid 3\infty$, then $c_p(\alpha_d)=1$.
\end{lemma}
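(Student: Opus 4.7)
The plan is to mimic the case analysis from Lemmas~\ref{lem:good-places} and \ref{lem:d-places} in the elliptic curve setting, appropriately adapted to the Prym surface $A$. The key structural input, already established in this section, is that $A[\pi] = \langle P, Q\rangle$ consists of $\Q$-rational points, so that for each $\alpha\in\{\phi,\phi',\psi,\psi'\}$ the kernel $A[\alpha]\subset A[\pi]$ is trivial as a $G_\Q$-module. Arguing exactly as in Section~\ref{subsec:zetamult}, since the $\mu_6$-action on $A[\pi]$ factors through $\mu_6\twoheadrightarrow\mu_2$ (because $\zeta_3$ acts as the identity on $\ker(1-\zeta_3)$), the twist identifies
\[A_d[\alpha_d]\simeq \chi_d,\]
the quadratic character of $G_\Q$ cutting out $\Q(\sqrt d)$.

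I would then split into three cases. First, suppose $p\nmid 3\f d\infty$. Then $A$ has good reduction at $p$, and since $d$ is a unit the twisting cocycle is unramified at $p$ (for $p$ odd, automatically; for $p=2\nmid\f d$ one uses that the cocycle in $H^1(\Q_2,\mu_6)$ coming from an odd $d$ only affects the unramified quotient of the Galois representation after observing $A_d$ still has good reduction, e.g.\ by Néron--Ogg--Shafarevich applied after the tame quadratic twist). Hence both $A_d$ and the target of $\alpha_d$ have good reduction, both Tamagawa numbers equal $1$, and \cite{schaefer}*{Lem.~3.8} gives $c_p(\alpha_d)=1$.

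Second, suppose $p\mid d$ but $p\nmid 3\f$. This case is handled verbatim by the argument of \Cref{lem:d-places}: since $v_p(d)=v_p(-3d)=1$ are odd, neither $d$ nor $-3d$ is a square in $\Q_p$, so $\chi_d$ is a nontrivial quadratic character distinct from the mod-$3$ cyclotomic character $\chi_{-3}$. Therefore $A_d[\alpha_d](\Q_p)=0$ and the image of the Kummer map lies in $H^1(\Q_p,\chi_d)$, which vanishes by \cite{ShnidmanWeiss}*{Lem.~4.6}. Both the kernel and cokernel of $A_d(\Q_p)\to Y_d(\Q_p)$ are trivial, yielding $c_p(\alpha_d)=1$.

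Third, suppose $p\mid\f$, $p\neq 3$, and $p\nmid d$. This is where the hypothesis $d\in\Sigma$ enters directly: by construction of $\Sigma$, we have $d,-3d\notin\Q_p^{\times 2}$, so once again $\chi_d\neq 1,\chi_{-3}$ as a character of $G_{\Q_p}$. The cohomological argument of Case~2 then applies identically, giving $c_p(\alpha_d)=1$. The main obstacle I anticipate is verifying the identification $A_d[\alpha_d]\simeq\chi_d$ as a $G_{\Q_p}$-module in a way that is uniform across all four choices of $\alpha$, and confirming that \cite{ShnidmanWeiss}*{Lem.~4.6} is being applied under the correct hypotheses at places of bad reduction of $A$; once these are in hand, the three cases above exhaust all $p\nmid 3\infty$ and the lemma follows.
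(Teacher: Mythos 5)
Your overall strategy is the same as the paper's: primes of bad reduction (and primes dividing $d$) are handled by the vanishing of $H^1(\Q_p,\chi_d)$ for $\chi_d\neq 1,\chi_{-3}$, exactly as in \Cref{lem:d-places}, and the remaining primes by a good-reduction/Tamagawa-number argument. The paper is simply more economical: for all $p\nmid\f$ it invokes \cite{ShnidmanWeiss}*{Thm.\ 4.7} directly (which applies because $d$ is squarefree), and for $p\mid\f$, $p\neq 3$ it runs the cohomological argument using $d,-3d\notin\Qp^{\times2}$ from the definition of $\Sigma$, just as in your Case~3. Your uniform identification $A_d[\alpha_d]\simeq\chi_d$ for all four $\alpha$ is correct, since $\mu_6$ acts on $A[\pi]=A[1-\zeta_3]$ through $\mu_2$.

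Two points need repair. First, in Case~1 the claim that $A_d$ still has good reduction at $p=2$ when $2\nmid \f d$ is false in general: if $d\equiv 3\pmod 4$, the quadratic part of the sextic twist is ramified at $2$, and a ramified quadratic twist of an abelian variety with good reduction acquires additive reduction, so N\'eron--Ogg--Shafarevich does not deliver what you want. The conclusion $c_2(\alpha_d)=1$ is still true, but it needs a different justification: when $\chi_d$ is ramified at $2$ one has $\chi_d\neq 1$ and $\chi_d\neq\chi_{-3}$ (as $\Q_2(\sqrt{-3})/\Q_2$ is unramified), so your cohomological argument from Case~2 applies; when $\chi_d$ is unramified the twist does preserve good reduction and the Tamagawa argument goes through. (Alternatively, simply cite \cite{ShnidmanWeiss}*{Thm.\ 4.7} as the paper does.) Second, your three cases do not literally exhaust all $p\nmid 3\infty$: the case $p\mid\f$ with $p\mid d$ is omitted, though it is covered verbatim by the Case~3 argument, since that argument only uses $d,-3d\notin\Qp^{\times 2}$, which holds for all $p\mid\f$ by the definition of $\Sigma$.
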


\begin{proof}
 If $p\nmid \f$, then since $d$ is squarefree, the result follows from \cite{ShnidmanWeiss}*{Thm.\ 4.7}. If $p\mid\f$, then, by assumption, $d,-3d\notin \Q_p^{\times 2}$. We have $\ker\alpha\simeq \Z/3\Z$, so $\ker\alpha_d(\Q_p) = 0$. By \cite{ShnidmanWeiss}*{Thm.\ 4.6}, $H^1(\Q ,\ker\alpha_d) = 0$ as well. Hence $c_p(\alpha_d) = 1$.
\end{proof}

\begin{lemma}\label{lem:tam-infinity}
    Let $\alpha\in \{\phi, \phi', \psi, \psi'\}$. Then $c_\infty(\alpha_d) = \begin{cases}\frac13 & d>0\\1&d<0. \end{cases}$
\end{lemma}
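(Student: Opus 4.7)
The plan is to argue exactly as in the proof of Lemma \ref{lem:infinity}. By definition,
\[
c_\infty(\alpha_d)=\frac{\#\coker\bigl(\alpha_d\colon A_d(\R)\to B_d(\R)\bigr)}{\#\ker\bigl(\alpha_d\colon A_d(\R)\to B_d(\R)\bigr)},
\]
where $B_d$ denotes the codomain of $\alpha_d$. The connecting map in the Kummer sequence identifies the numerator with a subgroup of $H^1(\Gal(\C/\R),A_d[\alpha_d])$ and the denominator with $\#A_d[\alpha_d](\R)$, so the computation reduces to understanding the real Galois module $A_d[\alpha_d]$.

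Next I would identify this module. Each of $\ker\phi$, $\ker\phi'$, $\ker\psi$, $\ker\psi'$ is cyclic of order $3$, and is either a subgroup of $A[\pi]$ (for $\phi$ and $\psi$) or the image of such a subgroup in a quotient (for $\phi'$ and $\psi'$); since $A[\pi](\Q)=\langle P,Q\rangle$, all four kernels carry the trivial $G_\Q$-action over $\Q$. The sextic twist by $d$ therefore acts on $\ker\alpha\simeq \Z/3\Z$ through a homomorphism $\mu_6\to\Aut(\Z/3\Z)\simeq\mu_2$. Because $\pi$ descends $1-\zeta_3$, multiplication by $\zeta_3$ is the identity on $A[\pi]$, so the subgroup $\mu_3\subset\mu_6$ acts trivially, and the twisting factors through $\mu_6/\mu_3\simeq\mu_2$. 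Hence $A_d[\alpha_d]$ is $\Z/3\Z$ twisted by the quadratic character cutting out $\Q(\sqrt d)/\Q$.

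The remainder is then immediate. The group $H^1(\Gal(\C/\R),A_d[\alpha_d])$ vanishes since $|\Gal(\C/\R)|=2$ and $|A_d[\alpha_d]|=3$ are coprime, so the numerator equals $1$. The denominator $\#A_d[\alpha_d](\R)$ equals $3$ when complex conjugation fixes $\sqrt d$, i.e., when $d>0$, and equals $1$ when $d<0$. This yields the asserted values $c_\infty(\alpha_d)=\tfrac{1}{3}$ for $d>0$ and $c_\infty(\alpha_d)=1$ for $d<0$. The only point that requires care, and hence is the main (mild) obstacle, is verifying that each of the four kernels really does carry the trivial Galois action before twisting; this amounts to tracking the rational points $P$ and $Q$ through the isogeny factorization, using the fact that $A[\pi](\Q)=\langle P,Q\rangle$.
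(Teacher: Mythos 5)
Your proof is correct and follows the same route as the paper, which simply argues as in Lemma \ref{lem:infinity}: the numerator is $1$ because $H^1(\Gal(\C/\R),A_d[\alpha_d])$ vanishes for a module of odd order, and the denominator $\#A_d[\alpha_d](\R)$ is $3$ or $1$ according to the sign of $d$. Your extra care in checking that each kernel is trivial as a $G_\Q$-module before twisting (using that $A[\pi]$ is spanned by the rational points $P$ and $Q$ and that $\zeta_3$ acts trivially on $A[\pi]$) is exactly the point the paper leaves implicit, and your justification of it is sound.
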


\begin{proof}
    The proof is identical to \Cref{lem:infinity}.
\end{proof}

Now, from the factorisation $[3] = \pi_d\circ\pi_{-27d}$, we see that
\[c_3(\phi_d)c_3(\phi'_d)c_3(\phi_{-27d})c_3(\phi'_{-27d}) = c_3([3]) = 9.\]
By \cite{ShnidmanWeiss}*{Lem.\ 10.5} and the assumption that $d, -3d\notin\Q_3^{\times 2}$, it follows that each of these four ratios are integers. Hence, exactly two of them are $3$ and two of them are $1$.

\begin{lemma}\label{lem:QM}
    Let $\Sigma'$ be the set of $d\in \Sigma$ such that $c_3(\phi_d) \ne c_3(\phi'_d)$. If $\Sigma'$ has positive density, then for a positive proportion of $d\in\Sigma'$, we have $\rk A_d(\Q) \le 1$.
\end{lemma}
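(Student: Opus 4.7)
The plan is to mirror the correlation-trick argument of \Cref{prop:ab-surface-pos-prop} in the present asymmetric setting, where one of the Selmer ratios at $3$ is forced to be $3$ rather than $1$; this is precisely what raises the conclusion from $\rk A_d(\Q)=0$ up to $\rk A_d(\Q)\le 1$. Restricting attention to $d<0$ so that \Cref{lem:tam-infinity} gives trivial archimedean contributions, the global Selmer ratios $c(\phi_d),c(\phi'_d),c(\psi_d),c(\psi'_d)$ coincide with their components at $3$ and, by \Cref{lem:tam-ratios} and the identity $c_3(\phi_d)c_3(\phi'_d)c_3(\phi_{-27d})c_3(\phi'_{-27d})=9$, each takes values in $\{1,3\}$. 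Partition the positive-density set $\Sigma'\cap\{d<0\}$ according to the tuple of global Selmer ratios. Using the constraint $c(\phi_d)c(\phi'_d)=c(\psi_d)c(\psi'_d)=c(\pi_d)=3$, there are finitely many possible tuples, so at least one subset $T$ has positive density. The defining condition of $\Sigma'$ forces $\{c(\phi_d),c(\phi'_d)\}=\{1,3\}$ on $T$; after possibly relabelling via the symmetry $P\leftrightarrow Q$ (which swaps $(\phi,\psi)$ and $(\phi',\psi')$), we may arrange $c(\phi_d)=c(\psi'_d)=1$ and $c(\phi'_d)=c(\psi_d)=3$, so that the analogue of \Cref{cor:identify-maps} supplies $\Sel(\phi_d)\simeq\Sel(\psi'_d)$ and $\Sel(\phi'_d)\simeq\Sel(\psi_d)$ for almost all $d\in T$.

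Next, combining $[3]=u\circ\pi_{-27d}\circ\pi_d$ with the factorisations $\pi_d=\phi'_d\circ\phi_d$ and $\widehat\pi_d=\widehat\phi_d\circ\widehat{\phi'_d}$, and using $\Sel(\pi_{-27d})\simeq\Sel(\widehat\pi_d)$ from \Cref{lem:equal-selmer}, we obtain
\[\#\Sel_3(A_d)\le\#\Sel(\phi_d)\,\#\Sel(\phi'_d)\,\#\Sel(\widehat\phi_d)\,\#\Sel(\widehat{\phi'_d}).\]
For all but finitely many $d\in T$, the abelian varieties in the diagram have no nontrivial rational $3$-torsion, so the Greenberg--Wiles formula $(\ref{eq:GreenbergWiles})$ identifies $\#\Sel(\widehat\alpha_d)=\#\Sel(\alpha_d)/c(\alpha_d)$ for $\alpha\in\{\phi,\phi'\}$, and we get
\[\#\Sel_3(A_d)\le\frac{\#\Sel(\phi_d)^2\,\#\Sel(\phi'_d)^2}{c(\phi_d)\,c(\phi'_d)}=\frac{\#\Sel(\phi_d)^2\,\#\Sel(\phi'_d)^2}{3}.\]
It therefore suffices to exhibit a positive proportion of $d\in T$ with $\#\Sel(\phi_d)=1$ and $\#\Sel(\phi'_d)=3$, as these are the minimum possible values (the second because $c(\phi'_d)=3$ forces $\#\Sel(\phi'_d)\ge 3$ via Greenberg--Wiles). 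For such $d$ we would have $\#\Sel_3(A_d)\le 3$, hence $\rk A_d(\Q)\le 1$.

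To produce such $d$, apply \cite{ShnidmanWeiss}*{Thm.\ 5.2} to $\phi_d$ and $\phi'_d$ over $T$: both $\avg_{d\in T}\#\Sel(\phi_d)$ and $\avg_{d\in T}\#\Sel(\phi'_d)$ are small explicit constants determined by the constant local data on $T$. Since these Selmer groups are $\F_3$-vector spaces, a substantial proportion of $d\in T$ achieves each of the two minimum values individually. To force both minima simultaneously, run the correlation trick of \Cref{prop:ab-surface-pos-prop}: the isogeny $\eta_d$ together with \Cref{lem:injection-of-selmer} and the identifications above embeds $\Sel(\eta_d)$ into $\Sel(\phi_d)\cap\Sel(\phi'_d)$, while \cite{ShnidmanWeiss}*{Thm.\ 5.3} bounds $\avg_{d\in T}\#\Sel(\eta_d)$ strictly above $1$. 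Combining with the tautology $\#\Sel(\phi_d)+\#\Sel(\phi'_d)=\mathrm{min}_d+\mathrm{max}_d$ produces a positive lower-density subset of $T$ on which both Selmer groups take their minimum size, which completes the proof.

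The main obstacle will be this final correlation step: the original trick in \Cref{prop:ab-surface-pos-prop} exploited the full symmetry $c(\phi_d)=c(\phi'_d)=1$, so in our asymmetric setting the numerical inequalities must be recalibrated to accommodate the different minimum values ($1$ and $3$) and the (possibly nontrivial) Selmer ratio of $\eta_d$ at $3$. Should the correlation inequality yield too weak a bound, a direct inclusion-exclusion on the individual averages from \cite{ShnidmanWeiss}*{Thm.\ 5.2} should suffice as a fallback, provided these averages are small enough relative to the next-larger possible values $3$ and $9$ in each group.
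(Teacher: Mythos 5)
Your primary route — adapting the correlation trick to this asymmetric setting — does not close as stated, and the paper in fact uses exactly the ``fallback'' you mention at the end. The problem with the correlation step is this: the min--max tautology together with a lower bound on $\avg_{d}\#\Sel(\eta_d)$ only yields an \emph{upper} bound on $\avg_d \mathrm{max}_d$, which (since $\#\Sel(\phi'_d)\ge 3$ forces $\mathrm{max}_d\ge 3$) pins down $\#\Sel(\phi'_d)=3$ on a positive proportion of $d$, but says nothing more about $\#\Sel(\phi_d)$: the event $\mathrm{max}_d=3$ is perfectly compatible with $\#\Sel(\phi_d)=3$, which would only give $\dim_{\F_3}\Sel_3(A_d)\le 3$. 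So your claim that the tautology ``produces a positive lower-density subset of $T$ on which both Selmer groups take their minimum size'' does not follow; to force $\#\Sel(\phi_d)=1$ you must anyway invoke the individual average $\avg_d\#\Sel(\phi_d)=2$ and take a union bound, at which point the $\eta_d$-correlation step is superfluous (and in fact gives a worse proportion for $\phi'_d$ than the direct average does).

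The paper's argument is the direct union bound, via \cite{ShnidmanWeiss}*{Prop.\ 5.4}: after relabelling so that $c(\phi_d)=1$ and $c(\phi'_d)=3^{\pm1}$ on $\Sigma'$, one gets $\Sel(\phi_d)=\Sel(\widehat\phi_d)=0$ for at least $\tfrac12$ of $d$, and $\dim_{\F_3}\bigl(\Sel(\phi'_d)\oplus\Sel(\widehat\phi'_d)\bigr)=1$ (its minimum) for at least $\tfrac56$ of $d$; since $\tfrac12+\tfrac56>1$, both hold for at least $\tfrac13$ of $d\in\Sigma'$, whence $\dim_{\F_3}\Sel_3(A_d)\le 1$. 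This is precisely the situation you flag in your last sentence — the averages $2$ and $4$ are small enough relative to the next possible values $3$ and $9$ — so no correlation is needed; the whole point of the correlation trick in \Cref{prop:ab-surface-pos-prop} is that there the two proportions are each only $\ge\tfrac12$ and the union bound fails. One further small gap: restricting to $d<0$ requires knowing that $\Sigma'\cap\{d<0\}$ has positive density, which you do not justify; the paper avoids this by treating both signs uniformly, allowing $c(\phi'_d)\in\{3,3^{-1}\}$, for which Prop.\ 5.4 gives the same $\tfrac56$ bound.
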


\begin{proof}
    If $d\in \Sigma'$, then by the above discussion, one of $c_3(\phi_d)$ and  $c_3(\phi'_d)$ is $1$ and the other is $3$. By Lemmas \ref{lem:tam-ratios} and \ref{lem:tam-infinity}, it follows that for each $d\in\Sigma'$, one of $c(\phi_d)$ and $c(\phi'_d)$ is $1$ and the other is $3^{\pm1}$. Without loss of generality, we can assume that $c(\phi_d) = 1$. By \cite{ShnidmanWeiss}*{Prop.\ 5.4}, for at least $\frac12$ of $d \in \Sigma'$, we have $\Sel(\phi_d) = 0 = \Sel(\widehat\phi_d)$, and for at least $\frac56$ of $d\in \Sigma'$, we have $\dim_{\F_3} \Sel(\phi'_d) \oplus \Sel(\widehat\phi'_{d}) = 1$.  Thus, for at least $\frac56 - \frac12 = \frac13$ of $d \in \Sigma'$, we have 
\[\dim_{\F_3} \Sel_3(A_d) \leq\dim(\Sel(\pi_d) \+\Sel(\widehat\pi_d))\leq \dim (\Sel(\phi_d) \+ \Sel( \widehat\phi_{d})\+\Sel(\phi'_d) \+ \Sel(\widehat \phi'_{d})) \leq 1,\]
which implies that $\rk A_d(\Q) \leq 1$.
\end{proof}

\begin{proof}[Proof of \Cref{thm:qm-pryms}]
    By \Cref{lem:QM}, it remains to show that if $\Sigma'$ has density $0$, then $\rk A_d(\Q) \le 1$ for a positive proportion of $d\in\Sigma$. So assume that $\Sigma'$ has density $0$, and let $T$ be the set of $d\in\Sigma$ such that $c(\phi_d) = c(\phi'_d) = c(\psi_d) = c(\psi'_d) = 1$. By \Cref{prop:ab-surface-pos-prop}, it is sufficient to show that $T$ has positive density, in which case, for a positive proportion of $d\in \Q\t/\Q^{\times 6}$, we have $\rk A_d(\Q) = 0$.  
    
    First note that if $d\in\Sigma\setminus\Sigma'$ and if $c(\phi_d) = c(\phi'_d) = 1$, then $c(\psi_d) = c(\psi'_d) = 1$. Indeed, $c_3(\pi_d) = c_3(\phi_d)c_3(\phi'_d) = c_3(\psi_d)c_3(\psi'_d)$. Since $d\notin\Sigma'$, we have $c_3(\phi_d) = c_3(\phi'_d)$, and since all four of $c_3(\phi_d),c_3(\phi'_d),c_3(\psi_d),c_3(\psi'_d)$ are either $1$ or $3$, we see that $c_3(\phi_d) = c_3(\phi'_d) = c_3(\psi_d) = c_3(\psi'_d)$. Hence, by Lemmas \ref{lem:tam-ratios} and \ref{lem:tam-infinity}, we have $c(\phi_d) = c(\phi'_d) = c(\psi_d) = c(\psi'_d)$ for all $d\in \Sigma\setminus\Sigma'$.
    
    Assume at first that there exists a positive density of $d\in \Sigma$ such that $c_3(\phi_d) = c_3(\phi'_d) = 3$. For any such $d$, if $d>0$, then $c_\infty(\phi_d) = c_\infty(\phi'_d) = \frac 13$, so $c(\phi_d) = c(\phi'_d) = 1$, and so $T$ has postive density. If $d<0$, then let $k\in \Z$ be such that $k\in \Z_p^{\times 2}$ for all $p\mid 3\f$ and $k<0$. Then $dk\in \Sigma$, $dk>0$, and $c_3(\phi_{dk}) = c_3(\phi'_{dk}) = 3$. Thus $dk\in T$.  A similar analysis shows that if $c_3(\phi_d) = c_3(\phi'_d) = 1$ then $T$ again has positive density. The result now follows from \Cref{prop:ab-surface-pos-prop}.
    \end{proof}
\begin{proof}[Proof of $\Cref{thm:qmcurves}$]
Given \Cref{thm:qm-pryms}, the proof is exactly as in \cite[Thm.\ 1.7]{ShnidmanWeiss}
\end{proof}

\section*{Acknowledgements}
The authors are grateful to Hershy Kisilevsky for helpful comments. They also thank the referees for their very helpful suggestions and comments. The first author was supported by the Israel Science Foundation (grant No.\ 2301/20). The second author was supported by an Emily Erskine Endowment Fund postdoctoral fellowship at the Hebrew University of Jerusalem, by the Israel Science Foundation (grant No.\ 1963/20) and by the US-Israel Binational Science Foundation (grant No.\ 2018250).

\bibliography{bibliography}
\bibliographystyle{alpha} 

\end{document}